\documentclass[11pt,reqno]{amsart}

\usepackage{amssymb}
\usepackage{amsmath}
\usepackage{amsthm}
\usepackage{latexsym}
\usepackage{amsfonts}
\usepackage{mathrsfs}
\usepackage{bbm}
\usepackage{titlesec}
\usepackage{color}
\usepackage{hyperref}
%\usepackage{refcheck}

%\newtheoremstyle{neu_thm}% name
%{13pt}       % Space above
%{8pt}      % Space below
%{\itshape}  % Body font
%{}          % Indent amount (empty = no indent, \parindent = para indent)
%{\bfseries} % Thm head font
%{.}         % Punctuation after thm head
%{.5em}      % Space after thm head: " " = normal interword space;
            %% \newline = linebreak
%{}          % Thm head spec (can be left empty, meaning 'normal')
%
%\newtheoremstyle{neu_defn}% name
%{13pt}       % Space above
%{8pt}      % Space below
%{}  % Body font
%{}          % Indent amount (empty = no indent, \parindent = para indent)
%{\bfseries} % Thm head font
%{.}         % Punctuation after thm head
%{.5em}      % Space after thm head: " " = normal interword space;
            %% \newline = linebreak
%{}          % Thm head spec (can be left empty, meaning 'normal')

%\theoremstyle{neu_thm}
%\theoremstyle{theorem}
\newtheorem{thm}{Theorem}[section]
\newtheorem{cor}[thm]{Corollary}
\newtheorem{lem}[thm]{Lemma}
\newtheorem{prop}[thm]{Proposition}

\theoremstyle{definition}

\newtheorem{rem}[thm]{Remark}
\newtheorem{ex}[thm]{Example}

%%%%%%%%%%%%%%%%%%%%%%%%%%%%%    GLOBAL SETTINGS    %%%%%%%%%%%%%%%%%%%%%%%%%
\titleformat{\section}{\normalfont\bfseries\centering}{\thesection.}{.25em}{}
\titleformat{\subsection}{\normalfont\bfseries}{\thesubsection.}{.25em}{}
\titleformat{\subsubsection}{\normalfont\bfseries}{\thesubsubsection.}{.25em}{}
\titlespacing{\section}{0pt}{*3}{*1.5}
\titlespacing{\subsection}{0pt}{*4}{*0.5}
\titlespacing{\subsubsection}{0pt}{*4}{*0.5}
\numberwithin{equation}{section}
\setlength{\oddsidemargin}{1cm}
\setlength{\evensidemargin}{1cm}
\setlength{\textwidth}{147mm}
\allowdisplaybreaks
%%%%%%%%%%%%%%%%%%%%%%%%%%%%%%%%%%%%%%%%%%%%%%%%%%%%%%%%%%%%%%%%%%%%%%%%%%%%%

%--------------------------------------------------------------------------------------------------
%
%     GENERAL DEFINITIONS
%
%--------------------------------------------------------------------------------------------------

% Things I cannot remember

% Numbers
\newcommand{\R}{\ensuremath{\mathbb R}}    % Reelle Zahlen
\newcommand{\C}{\ensuremath{\mathbb C}}    % Komplexe Zahlen
\newcommand{\N}{\ensuremath{\mathbb N}}    % Nat"urliche Zahlen
\newcommand{\Z}{\ensuremath{\mathbb Z}}    % Ganze Zahlen
    % Irgendein Koerper
\newcommand{\D}{\ensuremath{\mathbb D}}    % Open unit disk
\newcommand{\T}{\ensuremath{\mathbb T}}    % Unit circle

% Inner product stuff

\newcommand{\<}{\langle}
\renewcommand{\>}{\rangle}

               % Nur mit package stmaryrd.sty
               % Nur mit package stmaryrd.sty
 % Nur mit package stmaryrd.sty
        % Nur mit package stmaryrd.sty

% Letters
         
         \newcommand{\frakB}{\mathfrak B}
         
\newcommand{\calD}{\mathcal D}         
         
\newcommand{\calF}{\mathcal F}         
         
\newcommand{\calH}{\mathcal H}         
         \newcommand{\frakI}{\mathfrak I}

\newcommand{\calL}{\mathcal L}

\newcommand{\calV}{\mathcal V}

  % For scr-font package "mathrsfs" is needed

\newcommand{\la}{\lambda}

\newcommand{\vphi}{\varphi}

% Matrices and vectors

% Not in italic mode

\newcommand{\linspan}{\operatorname{span}}
\renewcommand{\ker}{\operatorname{ker}}
\newcommand{\ran}{\operatorname{ran}}
\newcommand{\dom}{\operatorname{dom}}

% Spectra

% Arrows

\newcommand{\Sra}{\Rightarrow}

\newcommand{\Llra}{\Longleftrightarrow}

\newcommand{\upto}{\uparrow}

% Misc
\newcommand{\ol}{\overline}

\newcommand{\wt}{\widetilde}
\newcommand{\wh}{\widehat}

\def\bei{\begin{itemize}}

	\def\eni{\end{itemize}}

\newcommand{\ltr}{ L^2(\mathbb R) }

% Colors
\definecolor{darkgreen}{rgb}{0,0.6,0.1}

%\renewcommand{\mag}[1]{{\color{magenta}{#1}}}

% Only here
\newcommand{\rmref}[1]{{\rm\ref{#1}}}
\newcommand{\braces}[1]{{\rm (}#1{\rm )}}
\newcommand{\Id}{\operatorname{Id}}

%--------------------------------------------------------------------------------------------------
%--------------------------------------------------------------------------------------------------

\begin{document}
%%%%%%%%%%%%%%%%%%%%%%%%%%%%%%%%%%%%%%%%%%%%%%%%%%%%%%%%%%%%%%%%%%%%%%%%%%%%%
%%%
%%%  HEAD OF PAPER
%%%
\title[]{Frame Properties of Operator Orbits}

\author[O. Christensen]{Ole Christensen}
\address{{\bf O.~Christensen:} Technical University of Denmark, DTU Compute, Building 303, 2800 Lyngby, Denmark}
\email{ochr@dtu.dk}
\urladdr{http://www.dtu.dk/english/service/phonebook/person?id=4715\&cpid=3712\&tab=1}

\author[M. Hasannasab]{Marzieh Hasannasab}
\address{{\bf M.~Hasannasab:} Technical University of Denmark, DTU Compute, Building 303, 2800 Lyngby, Denmark}
\email{mhas@dtu.dk}
\urladdr{http://www.dtu.dk/english/service/phonebook/person?id=110619\&tab=2\&qt=dtupublicationquery}

\author[F. Philipp]{Friedrich Philipp}
\address{{\bf F.~Philipp:} KU Eichst\"att-Ingolstadt, Mathematisch-Geographische Fakult\"at, Ostenstra\ss e 26, Kollegiengeb\"aude I Bau B, 85072 Eichst\"att, Germany}
\email{fmphilipp@gmail.com}
\urladdr{http://www.ku.de/?fmphilipp}

%%%%%%%%%%%%%%%%%%%%%%%%%%%%      ABSTRACT      %%%%%%%%%%%%%%%%%%%%%%%%%%%%%
\begin{abstract}
We consider sequences in a Hilbert space $\calH$ of the form $(T^nf_0)_{n\in I},$ with a linear operator $T$, the index set being either $I = \N$ or $I = \Z$, a vector $f_0\in
\calH$, and answer the following two related questions: (a) {\it Which frames for $\calH$ are of this form with an at least closable operator $T$?} and (b) {\it For which bounded operators $T$ and vectors $f_0$ is $(T^nf_0)_{n\in I}$ a frame for $\calH$?} As a consequence of our results, it turns out that an overcomplete Gabor or wavelet frame can never be written in the form $(T^nf_0)_{n\in \N}$ with a bounded operator $T$. The corresponding problem for $I = \Z$ remains open. Despite the negative result for Gabor and wavelet frames, the results demonstrate that the class of frames that can be represented in the form $(T^nf_0)_{n\in \N}$ with a bounded operator $T$ is significantly larger than what could be expected from the examples known so far.
\end{abstract}
%%%%%%%%%%%%%%%%%%%%%%%%%%%%%%%%%%%%%%%%%%%%%%%%%%%%%%%%%%%%%%%%%%%%%%%%%%%%%

\subjclass[2010]{94A20, 42C15, 30J05}
\keywords{Frames, operator orbits, Gabor frames, contractions}

\maketitle
\thispagestyle{empty}

\section{Introduction}
This paper is a contribution to dynamical sampling, a recent research topic introduced in \cite{acmt}. Dynamical sampling has already attracted considerable attention \cite{AK,A2,ap,cmpp,cmpp2,ch0,ch1,ch2,ch3,chr,p}. In short form, and to be made more precise soon, the question is to analyze the frame properties of sequences in a Hilbert space $\calH$ having the form $(T^n f_0)_{n=0}^\infty,$ where $T: \calH \to \calH$ is a linear operator and $f_0 \in \calH$. In operator theory, the set $(T^n f_0)_{n=0}^\infty$ is called the {\it orbit} of the vector $f$ under the operator $T$.

The main purpose of this paper is to characterize and compare the frame properties of orbits $(T^n f_0)_{n=0}^\infty$ and the bi-infinite orbits $(T^n f_0)_{n=-\infty}^\infty$ with a bounded operator $T$. The first frame characterization of orbits of the type $(T^nf_0)_{n=0}^\infty$ with a {\em normal} operator $T$ appeared in \cite{acmt}. Its necessary and sufficient conditions are very explicit in the sense that they allow for actually checking whether a given orbit with a normal operator is a frame or not. In addition, the characterization is suitable for constructing frame orbits with normal operators as generators. On the other hand, it is very restrictive since ``most'' frame orbits are generated by non-normal operators (see \ Remark \ref{r:comp_normal} for a more detailed description). Hence, it is desirable to find characterizations for general bounded operators. In fact, such a characterization has been found in \cite{cmpp} in the more general framework of ``extended'' operator orbits $(T^nf_j)_{n\in\N,\,j\in J}$, where $J$ is a countable index set. However, the characterization in \cite{cmpp} is neither explicit in the above sense, nor does it allow for a parametrization of all frame orbits in terms of well known objects. Here, we provide characterizations which satisfy the latter desideratum. In particular, while the property of a sequence of vectors being a frame is independent of the chosen indexing, our results show that the answer to the question whether an orbit sequence $(T^n f_0)_{n=0}^\infty$ is a frame or not turns out to be considerably different from the case of the bi-infinite orbits. For example, we will see in Theorem \ref{71218d} that if $(T^n f_0)_{n=0}^\infty$ is  a frame, then $T^n f_0 \to 0$ as $n\to \infty;$ and if $(T^n f_0)_{n=-\infty}^\infty$ is a frame then the vectors $T^n f_0, n\in \Z,$ are norm-bounded from below. This proves in particular that regardless of the chosen ordering, a Gabor frame cannot be represented in the form  $(T^n f_0)_{n=0}^\infty$ with a bounded operator $T.$ As we will see, this no-go result is due to the fact that $(T^n f_0)_{n=0}^\infty$ is the full orbit of $f_0$ under $T$: as a matter of fact, a Gabor frame is always contained in an orbit of a certain bounded operator.

Another objective of this paper is to characterize the frames that can be represented in the form $(T^nf_0)_{n\in I}$, where either $I=\N$ or $I=\Z$ and the linear operator $T$
is closable. The corresponding question for $T$ being bounded was solved in \cite{ch3,chr}. Surprisingly, we will show that
even though closability is a much weaker condition than boundedness, the  conditions on the frame remain the same in these two cases.

The paper is organized as follows. In the rest of this Introduction we state certain
key definitions and results from the literature and present the main new results.
Additional results and the proofs are given in the next four sections. Section \ref{71219a} contains the above-mentioned result for closable operators which is valid for both types of orbits. Section \ref{71219b} presents results that are particular for the classical orbits indexed by $\N$, and Section \ref{71219c} discusses bi-infinite orbits indexed by $\Z$. In Section \ref{80104a} we provide explicit frame constructions $(T^nf_0)_{n\in \N}$ for
operators $T$ that are similar to normal operators. Comparing the result with the
characterization obtained for general bounded operators proves that the class of
frames of the form $(T^nf_0)_{n\in \N}$ with a bounded operator $T$ is much larger than the previously known examples indicate. We also provide a perturbation result which can be used to construct a non-normal frame orbit from a normal one.

In order to make the paper accessible to readers from different communities, an appendix collects the necessary background information on contractions and subspaces of $L^2(\T)$.

\subsection{Definitions and notation} \label{71218b}
Throughout this paper, let $\N = \{0,1,2,\ldots\}.$ We let
$\calH$ denote a complex separable infinite-dimensional Hilbert space. Given Hilbert spaces $\calH_1$ and $\calH_2,$ we let $L(\calH_1,\calH_2)$  denote the set of all bounded operators mapping $\calH_1$ into $\calH_2$. Moreover, $GL(\calH_1,\calH_2)$ will denote the set of all bijective operators in $L(\calH_1,\calH_2)$. As usual, we set $L(\calH) := L(\calH,\calH)$ and $GL(\calH) := GL(\calH,\calH)$.

In order to formulate our results about orbits $(T^nf_0)_{n\in I}$  in an efficient way, we introduce a natural similarity relation between operator-vector pairs $(T,f)$. Considering two Hilbert spaces $\calH_1$ and $\calH_2$ as well as $T_j\in L(\calH_j)$ and $f_j\in\calH_j$, $j=1,2,$ we say that the pairs $(T_1,f_1)$ and $(T_2,f_2)$ are {\em equivalent} (or {\em similar}) {\it via} $V\in GL(\calH_1,\calH_2)$ if
\begin{equation}\label{e:similar}
T_2 = VT_1V^{-1}\qquad\text{and}\qquad f_2 = Vf_1.
\end{equation}
In this case, we write $(T_1,f_1)\cong (T_2,f_2)$.

Let $\T$ denote the unit circle. If $\sigma\subset\T$ is a Borel set, by $\frakB(\sigma)$ we denote the set of all Borel sets which are contained in $\sigma$. For $\sigma\in\frakB(\T)$ we denote by $|\sigma|$ the normalized arc length measure of $\sigma$. In particular, $|\T|=1$. Let $\sigma\in\frakB(\T)$. By $M_\sigma$ we denote the operator of multiplication with the free variable in $L^2(\sigma)$. That is, $M_\sigma : L^2(\sigma)\to L^2(\sigma)$ and $(M_\sigma h)(z) := zh(z)$, $h\in L^2(\sigma)$, $z\in\sigma$. The operator $M_\sigma$ is obviously unitary. By $1_\sigma$ we denote the constant function with value $1$ in $L^2(\sigma)$.

In what follows, we write $L^2 := L^2(\T)$. By $L^2_+$ we denote the subspace of $L^2$ consisting of all $f\in L^2$ with vanishing negative Fourier coefficients, i.e.,
\begin{equation}\label{e:L2+}
L^2_+ := \left\{f\in L^2 : \int_\T z^nf(z)\,d|z| = 0\;\text{ for }n\ge 1\right\},
\end{equation}
where $d|z|$ indicates integration with respect to the normalized arc length measure. We also define $L^2_- := L^2\ominus L^2_+$ and denote the orthogonal projections onto $L^2_+$ and $L^2_-$ by $P_+$ and $P_-$, respectively. On $L^2_+$ define the operator of multiplication with the free variable by
$$
M_\T^+ : L^2_+\to L^2_+,\qquad (M_\T^+ f)(z) = zf(z),\quad f\in L^2_+,\,z\in\T.
$$
By $\frakI$ we denote the class of functions $h\in L^2_+$ such that $|h(z)|=1$ for a.e.\ $z\in\T$. The functions in $\frakI$ are exactly the radial limits of the inner functions on the open unit disk (cf.\ Appendix \ref{ap:contractions}). Therefore, we also call them {\em inner}. Also, let $\frakI^*$ denote the set of all inner functions which are not finite Blaschke products. As a convention, we identify inner functions whose quotient is a (unimodular) constant.

\subsection{An overview of the new results} \label{71218c}
In this section we will collect some of our main new contributions and relate them
to known results and open problems from the literature. We first state a characterization of frames of the forms $(T^nf_0)_{n\in\N}$ and $(T^nf_0)_{n\in\Z}$, respectively, which is a combination of Theorem \ref{t:charac_N} and Theorem \ref{t:charac}.
Given a closed subspace $M$ of the underlying Hilbert space $\calH$, let $P_M$  denote the orthogonal projection onto $M$.
Now, for $h\in\frakI\cup\{0\}$, let
\begin{equation}\label{e:objects}
\calH_h := L^2_+\ominus hL^2_+,\qquad A_h := P_{\calH_h}\left(M_\T^+|_{\calH_h}\right),\qquad\text{and}\qquad\phi_h := P_{\calH_h}1_\T.
\end{equation}
Note that the spaces $\calH_h$, $h\in\frakI^*\cup\{0\}$, are exactly the infinite-dimensional orthogonal complements of the invariant subspaces of the multiplication operator $M_\T^+$ (cf.\ Appendix \ref{ap:subspaces}). The operators $A_h$ are the compressions of $M_\T^+$ to these orthogonal complements.

\begin{thm} \label{71812a} \ \, \
\bei
\item[(a)]
Let $T\in L(\calH)$ and $f_0\in\calH$. Then the following statements are equivalent:
\begin{enumerate}
\item[{\rm (i)}]  The system $(T^nf_0)_{n\in\N}$ is a frame for $\calH$.
\item[{\rm (ii)}] $(T,f_0)\cong (A_h,\phi_h)$ for some $h\in\frakI^*\cup\{0\}$.
\end{enumerate}

\item[(b)]
Let $T\in GL(\calH)$ and $f_0\in\calH$. Then the following statements are equivalent:
\begin{enumerate}
\item[{\rm (i)}]  The system $(T^nf_0)_{n\in\Z}$ is a frame for $\calH$.
\item[{\rm (ii)}] $(T,f_0)\cong (M_{\sigma},1_\sigma)$ for some $\sigma\in\frakB(\T)$.
\end{enumerate}
\eni
\end{thm}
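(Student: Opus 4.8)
The two parts are structurally parallel, so I would organize the proof around the observation that frames of the form $(T^nf_0)_{n\in I}$ correspond, up to similarity, to restrictions of a multiplication operator. The direction (ii)$\Rightarrow$(i) is the ``easy'' one in both cases: since the frame property is preserved under application of an operator in $GL(\calH_1,\calH_2)$, and since $(T,f_0)\cong(S,g)$ forces $T^nf_0 = V^{-1}S^n g$, it suffices to check that $(A_h^n\phi_h)_{n\in\N}$ is a frame for $\calH_h$ in part (a) and that $(M_\sigma^n 1_\sigma)_{n\in\Z}$ is a frame for $L^2(\sigma)$ in part (b). The latter is immediate: $(M_\sigma^n 1_\sigma)_{n\in\Z} = (z^n)_{n\in\Z}$ in $L^2(\sigma)$, which is the image under the restriction map $L^2(\T)\to L^2(\sigma)$ of the orthonormal basis $(z^n)_{n\in\Z}$ of $L^2(\T)$, hence a Parseval frame (in fact a tight frame with bound $1$) for $L^2(\sigma)$. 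For part (a) one uses that $\calH_h = L^2_+\ominus hL^2_+$ is a model space and that $A_h$ is the compression of the shift; the vectors $(A_h^n\phi_h)_{n\in\N}$ are then $(P_{\calH_h}z^n)_{n\in\N}$, which is the image of the ONB $(z^n)_{n\in\N}$ of $L^2_+$ under the projection $P_{\calH_h}$ — again a Parseval frame for $\calH_h$. (Here I would quote the model-space facts from Appendix \ref{ap:subspaces}.)

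**The substantive direction is (i)$\Rightarrow$(ii).** For part (b), suppose $(T^nf_0)_{n\in\Z}$ is a frame for $\calH$ with $T\in GL(\calH)$. The analysis operator $U\colon \calH\to \ell^2(\Z)$, $Uf = (\langle f, T^nf_0\rangle)_{n\in\Z}$, intertwines $T^{-*}$ (the adjoint of $T^{-1}$) with the bilateral shift $S$ on $\ell^2(\Z)$, because $\langle Tf, T^nf_0\rangle = \langle f, T^{*}T^n f_0\rangle$ — more carefully, one should set things up so that $T^*$ (or $T^{-1}$) is conjugated to the shift on the range of the synthesis operator. The right framework: let $W\colon\ell^2(\Z)\to\calH$ be the synthesis operator $W e_n = T^nf_0$; then $W S = T W$ where $S$ is the bilateral shift, $W$ is bounded and surjective, and $W^*W$ is bounded below. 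Via the bilateral-shift-to-$M_\T$ unitary (Fourier transform) $\calF\colon\ell^2(\Z)\to L^2(\T)$, $W$ becomes a bounded surjection $\wt W\colon L^2(\T)\to\calH$ intertwining $M_\T$ with $T$, and $f_0 = \wt W 1_\T$. The key structural input is then a description of such intertwiners: an operator $\wt W$ with $\wt W M_\T = T\wt W$, $T$ invertible, bounded below on the orthocomplement of its kernel, must have kernel of the form $1_{\T\setminus\sigma}L^2(\T)$ for some Borel set $\sigma$ (a reducing subspace of $M_\T$), so that $\wt W$ factors through $L^2(\sigma)$ as $\wt W = V R_\sigma$ with $R_\sigma$ the restriction and $V\in GL(L^2(\sigma),\calH)$; this gives exactly $(T,f_0)\cong(M_\sigma,1_\sigma)$. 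For part (a), the same machinery applies with the \emph{unilateral} shift: a frame $(T^nf_0)_{n\in\N}$ yields a bounded surjection $\wt W\colon L^2_+\to\calH$ (bounded below on $(\ker\wt W)^\perp$) with $\wt W M_\T^+ = T\wt W$ and $f_0 = \wt W 1_\T$; now the invariant subspaces of $M_\T^+$ are, by Beurling's theorem, exactly $hL^2_+$ for $h\in\frakI\cup\{0\}$, so $\ker\wt W = hL^2_+$, and $\wt W$ factors as $V A_h'$ through $\calH_h = L^2_+\ominus hL^2_+$ with $V\in GL(\calH_h,\calH)$, the compression of $M_\T^+$ to $\calH_h$ being $A_h$ and the image of $1_\T$ being $\phi_h$. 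The requirement that $\calH$ be infinite-dimensional forces $h\in\frakI^*\cup\{0\}$ (finite Blaschke products give finite-dimensional $\calH_h$).

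**Main obstacle.** The crux, and the step I expect to require the most care, is the factorization lemma: showing that a bounded operator $W$ intertwining a shift with a bounded operator $T$, whose analysis/synthesis operator realizes a frame (equivalently: $W$ is onto and $W|_{(\ker W)^\perp}$ is bounded below), must have kernel equal to an invariant subspace of the shift \emph{and} split off a similarity. Boundedness-below on the orthocomplement of the kernel is what promotes ``$W$ surjective with closed invariant kernel'' to ``$W$ induces an isomorphism $(\ker W)^\perp\to\calH$,'' and one must check that $(\ker W)^\perp$ carries the compressed operator $A_h$ (resp.\ $M_\sigma$) — i.e.\ that $P_{(\ker W)^\perp}$ intertwines correctly — which is where the invariance (not reducing-ness) of $\ker W$ in the $\N$-case is used, together with the fact that $\phi_h = P_{\calH_h}1_\T$ generates under $A_h$. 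In the $\Z$-case the subspace is automatically reducing (kernels of intertwiners of a unitary are reducing), which is why one lands on $M_\sigma$ rather than a genuine compression. I would present this factorization as a standalone lemma and then deduce both parts of Theorem~\ref{71812a} from it, citing Beurling's theorem and the Wold/reducing-subspace structure from the appendix. Everything else — the intertwining identities, the passage through the Fourier transform, the identification of $f_0$ with $\wt W 1_\T$, and the tight-frame computations in (ii)$\Rightarrow$(i) — is routine bookkeeping.
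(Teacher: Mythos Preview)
Your proposal is correct and follows essentially the same route as the paper. The paper also proves (ii)$\Rightarrow$(i) by identifying $(A_h^n\phi_h)_{n\in\N}$ and $(M_\sigma^n1_\sigma)_{n\in\Z}$ as projections of orthonormal bases (hence Parseval frames), and proves (i)$\Rightarrow$(ii) by defining the synthesis-type map $U\colon L^2_+\to\calH$ (resp.\ $U\colon L^2\to\calH$), establishing the intertwining $TU=UM_\T^+$ (resp.\ $TU=UM_\T$), invoking Beurling's theorem to identify $\ker U$ as $hL^2_+$ (resp.\ $L^2(\T\setminus\sigma)$, using that $T\in GL(\calH)$ forces $\ker U$ to be reducing), and then taking $V:=U|_{(\ker U)^\perp}$ with inverse $U^*(UU^*)^{-1}$; your extra step of passing through $\ell^2(I)$ before Fourier-transforming is cosmetic.
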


So, while operators generating frame orbits indexed over $\Z$ are similar to multiplication operators, the ones generating frame orbits indexed over $\N$ are similar to special compressions of the multiplication operator $M_\T^+$. Note that the compressions $A_h$ play an important role in Nikolskii's book \cite{n}. They also serve as a prominent example for the so-called $C_0$-contractions (cf.\ Appendix \ref{ap:contractions} and \cite[Ch.\ III, Prop.\ 4.3]{nfbk}). Their spectrum in the open unit disk consists of isolated eigenvalues which are exactly the zeros of $h$.

An important consequence of Theorem \ref{71812a} is that the class of frames that can be represented in the form  $(T^nf_0)_{n\in \N}$ with a bounded operator $T$ is much larger than what could be expected from the examples known so far. We will discuss this in more detail in Remark \ref{r:comp_normal}.

Assuming that the orbit $(T^nf_0)_{n\in\N}$ is a frame for $\calH$
for some fixed $f_0\in \calH,$ it is natural to ask for a characterization
 of {\em all} $f\in \calH$ such that $(T^nf)_{n\in\N}$ is a frame for $\calH$. For
 $T\in L(\calH)$, we indeed characterize these vectors in Proposition \ref{p:VT} (see also Remark \ref{r:explicit} for a more explicit representation). Analogue results for bi-infinite orbits are given in Section \ref{71219c}.

The next result combines Corollary \ref{c:strst} and Corollary \ref{c:notendstozero}. The conclusion about $T^*$ in \eqref{71219d} is already known \cite{ap} and is only included here for completeness and direct comparison between (a) and (b). Recall that a frame is called {\em overcomplete} if it is not a Riesz basis.

\begin{thm} \label{71218d} \ \, \
\bei

\item[(a)]
Let $T\in L(\calH)$ and $f_0\in\calH$ such that $(T^nf_0)_{n\in\N}$ is an overcomplete frame for $\calH$. Then for each $f\in\calH$ we have that
\begin{eqnarray} \label{71219d}
T^nf\to 0
\qquad\text{and}\qquad
(T^*)^{n}f\to 0\quad\text{as }n\to\infty.
\end{eqnarray}
\item[(b)] Let $T\in GL(\calH)$ and $f_0\in\calH$, such that $(T^nf_0)_{n\in\Z}$ is a frame for $\calH$ with frame bounds $A$ and $B$. Then for all $f\in\calH$ and all $n\in\Z$ we have
$$
\|T^nf\|\,\ge\,\sqrt{\frac A B}\,\|f\|\qquad\text{and}\qquad\|(T^*)^{n} f\|\,\ge\,\sqrt{\frac A B}\,\|f\|.
$$
\eni

\end{thm}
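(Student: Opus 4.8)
The key input will be Theorem \ref{71812a}(a): since $(T^nf_0)_{n\in\N}$ is a frame, we have $(T,f_0)\cong(A_h,\phi_h)$ for some $h\in\frakI^*\cup\{0\}$, and overcompleteness rules out $h=0$ (indeed $h=0$ gives $\calH_h = L^2_+$ and $(A_0^n\phi_0)_{n\in\N} = (z^n)_{n\in\N}$, an orthonormal basis, hence a Riesz basis). So $h\in\frakI^*$ and $A_h$ is the compression of $M_\T^+$ to $\calH_h = L^2_+\ominus hL^2_+$. Convergence $T^nf\to0$ for all $f$ is invariant under similarity, so it suffices to show $A_h^nf\to0$ for all $f\in\calH_h$. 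The plan is to invoke the theory of $C_0$-contractions (Appendix \ref{ap:contractions}): $A_h$ is a completely nonunitary contraction whose characteristic function is the scalar inner function $h$, and since $h\in\frakI^*$ is not a finite Blaschke product, $A_h$ belongs to the class $C_{00}$, i.e. both $A_h^nf\to0$ and $(A_h^*)^nf\to0$ for every $f$. Concretely one can argue directly: $A_h^n f = P_{\calH_h}(z^n f)$, and since multiplication by $z$ is an isometry, $\|A_h^nf\|^2 = \|z^nf\|^2 - \|P_{hL^2_+}(z^nf)\|^2 = \|f\|^2 - \|P_{hL^2_+}(z^nf)\|^2$; so $A_h^nf\to0$ is equivalent to $\|P_{hL^2_+}(z^nf)\|\to\|f\|$. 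Writing $P_{hL^2_+}g = h\,P_+(\bar h g)$ and using that $z^n\to0$ weakly in $L^2$ together with $|h|=1$ a.e., one shows $\bar h z^n f$ has its Fourier support escaping to $+\infty$, forcing $\|P_+(\bar h z^n f)\| \to \|\bar h z^n f\| = \|f\|$. The statement for $(T^*)^nf$ then follows by the symmetric fact that $A_h^* = P_{\calH_h}(M_\T^+)^*|_{\calH_h}$ is likewise a $C_{00}$-contraction (its characteristic function is $h(\bar{\,\cdot\,})^{\widetilde{\phantom{x}}}$, still inner and not Blaschke), or alternatively one cites \cite{ap} as the authors indicate.

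**The plan for part (b).**

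Here Theorem \ref{71812a}(b) gives $(T,f_0)\cong(M_\sigma,1_\sigma)$ for some $\sigma\in\frakB(\T)$ via some $V\in GL(\calH,L^2(\sigma))$. The norm bounds are not similarity-invariant, so I cannot simply reduce to $M_\sigma$; instead I would exploit that $M_\sigma$ is unitary. Since $T = V^{-1}M_\sigma V$, we get $T^n = V^{-1}M_\sigma^n V$ and, $M_\sigma$ being unitary, $\|M_\sigma^n g\| = \|g\|$ for all $g$. Hence for any $f\in\calH$,
\[
\|T^n f\| = \|V^{-1}M_\sigma^n V f\| \ge \|V^{-1}\|^{-1}\|M_\sigma^n Vf\| = \|V^{-1}\|^{-1}\|Vf\| \ge \|V^{-1}\|^{-1}\|V\|^{-1}\|f\|.
\]
It remains to identify $\|V\|^{-1}\|V^{-1}\|^{-1}$ with $\sqrt{A/B}$. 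This is where the frame bounds enter: $V$ is (up to the identification $\ell^2 \cong L^2(\sigma)$ sending the orthonormal basis $(z^n)_{n\in\Z}$ to itself) essentially the synthesis-type operator intertwining the frame $(T^nf_0)$ with the orthonormal basis, and the frame inequality $A\|f\|^2 \le \sum_n |\langle f, T^nf_0\rangle|^2 \le B\|f\|^2$ translates into $\sqrt A\,\|f\| \le \|Vf\| \le \sqrt B\,\|f\|$; more carefully, one uses that the analysis operator of the frame factors through $V$ and the Fourier/coordinate isometry. From $\sqrt A\le \|V f\|/\|f\|\le\sqrt B$ one reads off $\|V\|\le\sqrt B$ and $\|V^{-1}\|\le 1/\sqrt A$, giving the claimed constant $\sqrt{A/B}$. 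The same computation with $T^* = V^*(M_\sigma^*)(V^*)^{-1}$... — more simply, $(T^*)^n = (T^n)^*$ and $\|(T^n)^* f\|\ge \|((T^n)^*)^{-1}\|^{-1}\|f\| = \|(T^{-n})^*\|^{-1}\|f\| = \|T^{-n}\|^{-1}\|f\| \ge \sqrt{A/B}\,\|f\|$ by applying the bound already obtained to the index $-n$.

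**Main obstacles.** For (a), the genuine content is the decay $A_h^nf\to0$; the risk is in justifying the weak-convergence argument cleanly (that $P_+(\bar h z^n f)$ loses no norm in the limit) rather than hand-waving via "$C_{00}$", so I would either develop the direct $L^2(\T)$ computation carefully or make sure the appendix actually states the $C_{00}$ dichotomy for $C_0$-contractions with non-Blaschke characteristic function. For (b), the only subtlety is the bookkeeping identifying the operator norm bounds of the intertwiner $V$ with $\sqrt B$ and $1/\sqrt A$ — one must be careful that the similarity $V$ in Theorem \ref{71812a}(b) can indeed be taken to satisfy these sharp bounds (it can, because it is built from the frame operator, essentially $V$ being $S^{-1/2}$ composed with an isometry, where $S$ is the frame operator with $A\le S\le B$), and handling the degenerate possibility $|\sigma|<1$ versus $|\sigma|=1$ uniformly, though $M_\sigma$ is unitary on $L^2(\sigma)$ regardless.
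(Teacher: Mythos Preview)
Your approach to part~(a) is essentially the paper's: both reduce via Theorem~\ref{71812a}(a) to the model operator $A_h$ with $h\in\frakI^*$, and then compute directly that $\|A_h^nf\|^2$ is a tail of the Fourier series of $f\ol h$. The paper writes this as $\|P_-(z^nf\ol h)\|^2 = \sum_{k\ge n+1}|\<f\ol h,z^{-k}\>|^2\to 0$ using \eqref{e:projection}, while you phrase it dually as $\|P_+(\ol h z^nf)\|\to\|f\|$; these are the same computation. For $(T^*)^n f\to 0$ the paper, like you, simply cites \cite{ap}.

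For part~(b) your route is correct in outline but more circuitous than the paper's, and one intermediate claim is off. The paper does \emph{not} invoke Theorem~\ref{71812a}(b) at all. Instead it proves directly (Lemma~\ref{l:necessary}) that the frame operator $S$ satisfies $TST^* = S$ by a two-line index shift, whence $U:=S^{-1/2}TS^{1/2}$ is unitary; then $\|T^nf\| = \|S^{1/2}U^nS^{-1/2}f\|\ge\sqrt A\,\|S^{-1/2}f\|\ge\sqrt{A/B}\,\|f\|$ follows immediately from $A\le S\le B$. Your approach recovers this in disguise (you even say ``$V$ being $S^{-1/2}$ composed with an isometry''), but going through the characterization theorem forces you to extract norm bounds on the unique intertwiner $V$, and here your claim ``$\sqrt A\,\|f\|\le\|Vf\|\le\sqrt B\,\|f\|$'' is backwards: the unique $V\in GL(\calH,L^2(\sigma))$ from the proof of Theorem~\ref{t:charac} is the pseudo-inverse of the synthesis operator, so $\|Vf\|^2 = \<f,S^{-1}f\>$, giving $B^{-1/2}\|f\|\le\|Vf\|\le A^{-1/2}\|f\|$. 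Fortunately only the product $\|V\|\cdot\|V^{-1}\|\le\sqrt{B/A}$ matters, and that survives; but the paper's direct argument via $TST^*=S$ is both shorter and avoids this bookkeeping entirely.
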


Theorem \ref{71218d} (a) answers a question posed in \cite{ch0}. In particular,
the result shows that the classical overcomplete frames in $\ltr$, i.e., frames of translates, Gabor
frames, and wavelet frames cannot be represented in the form $(T^nf_0)_{n\in\N}$ with a bounded operator $T: \ltr \to \ltr;$ indeed, all these frames consist of
elements of equal norm, which contradicts \eqref{71219d}.
At present it is still an open problem whether there exist
overcomplete Gabor frames or wavelet frames that can be represented in the form $(T^nf_0)_{n\in\Z}$ with
a bounded operator $T: \ltr \to \ltr.$ Note that there exist  overcomplete
frames of translates for subspaces of $\ltr$ with such a representation: indeed, considering
the translation operator $T_{1/2}f(x)= f(x-1/2),$ the sequence
$(\mbox{sinc}(\cdot - n/2))_{n\in \Z}=  (T_{1/2}^n\mbox{sinc})_{n\in \Z} $ is an
overcomplete frame for the Paley-Wiener space because $(\mbox{sinc}(\cdot - n))_{n\in \Z}$
is a basis for that space.

It is interesting to notice that even though an overcomplete Gabor frame cannot be represented in the form $(T^nf_0)_{n\in\N}$ with a bounded operator $T$, any Gabor frame can actually be written as a subsequence of such a sequence. Indeed, a fundamental result by Halperin, Kitai, and Rosenthal \cite{kitai} states that every linearly independent countable sequence in a Hilbert space $\calH$ is contained in the orbit of a certain bounded operator $T: \calH \to \calH.$ Thus, we have the following result, which in particular applies to every Gabor system with a nonzero window:

\begin{prop}[{\cite{kitai}}]
Consider any linearly independent frame $(f_n)_{n\in \N}$ in a Hilbert space $\calH$. Then there exists a bounded operator $T: \calH \to \calH$
and a sequence $(\alpha_n)_{n\in \N}\subset \N$ such that
$$
(f_n)_{n\in \N} = (T^{\alpha_n}f_1)_{n\in \N}.
$$
\end{prop}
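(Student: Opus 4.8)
The plan is to invoke the Halperin--Kitai--Rosenthal theorem from \cite{kitai} essentially as a black box, doing only the bookkeeping needed to match the statement of our proposition. First I would recall precisely what \cite{kitai} asserts: given any linearly independent sequence $(g_k)_{k\in\N}$ in a separable Hilbert space $\calH$, there is a bounded operator $T\in L(\calH)$ and a strictly increasing sequence of nonnegative integers $(\alpha_k)_{k\in\N}$ such that $T^{\alpha_k}g_1 = g_k$ for all $k$; equivalently, after a harmless relabeling, $(g_k)_{k\in\N}$ is a subsequence of the orbit $(T^n g_1)_{n\in\N}$.

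The core observation is that a frame $(f_n)_{n\in\N}$ which is linearly independent is in particular a countable linearly independent sequence in $\calH$, so the hypothesis of \cite{kitai} applies verbatim with $(g_k)_{k\in\N} := (f_k)_{k\in\N}$ (up to the indexing convention that our frames start at $n=0$ while \cite{kitai} starts the orbit at the first vector; since our $\N = \{0,1,2,\dots\}$ this is purely cosmetic and one relabels so that $f_1$ plays the role of the seed vector, or simply starts the index set at $1$). The sequence $(\alpha_n)_{n\in\N}\subset\N$ is exactly the exponent sequence produced by \cite{kitai}, and the claimed identity $(f_n)_{n\in\N} = (T^{\alpha_n}f_1)_{n\in\N}$ is then nothing but a restatement of the conclusion $T^{\alpha_n}f_1 = f_n$.

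The only genuine point requiring a sentence of justification is that the Halperin--Kitai--Rosenthal construction works in the infinite-dimensional separable Hilbert space setting of this paper (which it does; \cite{kitai} is stated for infinite-dimensional Banach spaces, and a separable Hilbert space is a special case), and that the operator $T$ one obtains is indeed \emph{bounded}, not merely densely defined --- which is precisely what \cite{kitai} delivers. For completeness I would also remark that every Gabor system $\{e^{2\pi i mbx}g(x-na)\}_{m,n\in\Z}$ with $g\neq 0$ is linearly independent (this is the linear independence of time-frequency translates, a known fact), so the proposition applies to every Gabor \emph{frame} and yields the contrast with Theorem \ref{71218d}(a): a Gabor frame, while never equal to a full orbit $(T^nf_0)_{n\in\N}$ under a bounded $T$, is always a subsequence of one.

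There is essentially no obstacle here: the proposition is a direct corollary of a cited theorem, and the proof is a short derivation. The ``hard part'', such as it is, is merely to make sure the reader sees that linear independence of the frame is the hypothesis that unlocks \cite{kitai}, and that the indexing in our formulation ($(\alpha_n)_{n\in\N}$, seed vector $f_1$) is a faithful transcription of theirs. \qed
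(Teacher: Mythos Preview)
Your proposal is correct and matches the paper's approach exactly: the paper does not give a proof of this proposition at all, but simply states it as an immediate consequence of the Halperin--Kitai--Rosenthal theorem \cite{kitai}, noting just before the statement that ``every linearly independent countable sequence in a Hilbert space $\calH$ is contained in the orbit of a certain bounded operator.'' Your write-up supplies the (trivial) bookkeeping that the paper leaves implicit.
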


Theorem \ref{71218d} (a) also shows that a frame of the form $(T^nf_0)_{n\in\N}$
cannot contain an infinite Riesz sequence. The mentioned example of the
frame $(\mbox{sinc}(\cdot - n/2))_{n\in \Z}$ in the
Paley-Wiener space shows that this restriction does not apply to frames
of the form $(T^nf_0)_{n\in\Z}$.

\section{Frame orbits generated by closable operators}\label{71219a}
Throughout this section we will consider sequences $F = (f_n)_{n\in I}$ in a Hilbert space $\calH$ with indexing over $I=\N$ or $I=\Z$. A natural question to ask is whether there exists a linear operator $T$ such that $f_{n+1} = Tf_n$ for all $n\in I$? In the affirmative case, the operator $T$ obviously maps the subspace $\calD_F := \linspan\{f_n : n\in I\}$ into itself; furthermore the operator is surjective if $I = \Z$. In \cite{ch3,chr} it was proved that such an operator $T : \calD_F\to\calD_F$ exists if and only if the set $\{f_n : n\in I\}$ is linearly independent. The operator $T$ then is unique (and bijective if $I=\Z$) and $f_n = T^nf_0$ for all $n\in I$. We call $T$ the {\em generating operator} of $F$. In the case where $F$ is a frame for $\calH$ it was shown in \cite{ch3,chr} that its generating operator is bounded if and only if the kernel of the synthesis operator of $F$ is invariant under the right shift operator on $\ell^2(I)$.

Since invariance of a subspace of $\ell^2(I)$ under the right shift is a fairly strong property, it seems that requiring boundedness of $T$ is very restrictive and relaxing this requirement might lead to a weaker condition. In what follows, we will characterize those frames $F = (f_n)_{n\in I}$ for $\calH$ which can be represented in the form $(T^nf_0)_{n\in I}$, where $I=\N$ or $I=\Z$ and $T$ is a {\em closable} operator, defined on $\calD_F := \linspan\{f_n : n\in I\}$. Recall that an operator $T : \calD_F\to\calH$ is called closable if the closure of its graph in $\calH\times\calH$ is again the graph of an operator (which is then called the closure of $T$, denoted by $\ol T$).

Let $F = (f_n)_{n\in I}$ be a linearly independent frame for $\calH$. Since the frame is complete in $\calH$, its generating operator $T : \calD_F\to\calH$ is obviously densely defined. Therefore, its adjoint $T^*$ exists. Recall that
$$
\dom T^* = \big\{g\in\calH\,|\,\exists h\in\calH\,\forall f\in\calD_F : \<Tf,g\> = \<f,h\>\big\}.
$$
In the sequel, we denote by $L$ and $R$ the left shift and the right shift operator on $\ell^2(I)$, respectively.

\begin{lem}\label{l:DT*}
Let $F = (f_n)_{n\in I}=(T^nf_0)_{n\in I}$ be a linearly independent frame for $\calH$ with generating operator $T : \calD_F\to\calH$. Then $\dom T^*$ is closed and
\begin{align*}
\dom T^*
= \big\{g\in\calH\,|\,\exists h\in\calH\,\forall n\in I : \<f_{n+1},g\> = \<f_n,h\>\big\} = (LC)^{-1}\ran C,
\end{align*}
where $C$ denotes the analysis operator of $F$ and $(LC)^{-1}\ran C$ denotes the
preimage of the set $\ran C$ under the map $LC.$
\end{lem}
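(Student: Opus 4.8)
The claim has two equivalent descriptions of $\dom T^*$ plus the assertion that this set is closed. I would treat the first equality, then the identification with $(LC)^{-1}\ran C$, and then derive closedness from the latter.

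The plan is to start from the displayed definition of $\dom T^*$ and exploit that $\calD_F$ is the linear span of $\{f_n:n\in I\}$. For $g\in\calH$, the condition $\exists h\,\forall f\in\calD_F:\<Tf,g\>=\<f,h\>$ can be tested on the spanning set: by linearity it is equivalent to $\<Tf_n,g\>=\<f_n,h\>$ for all $n\in I$, and since $Tf_n=f_{n+1}$ (the defining property of the generating operator, valid on all of $\calD_F$ including the case $n$ at the ``top'' end when $I=\N$, where $f_{n+1}$ is again in $\calD_F$), this reads $\<f_{n+1},g\>=\<f_n,h\>$ for all $n\in I$. That gives the first equality. The point worth stating carefully is why passing from ``all $f\in\calD_F$'' to ``all basis vectors $f_n$'' is legitimate, which is just bilinearity of the inner product together with finiteness of the linear combinations defining elements of $\calD_F$.

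Next I would reinterpret the condition $\<f_{n+1},g\>=\<f_n,h\>$ in terms of the analysis operator $C:\calH\to\ell^2(I)$, $Cg=(\<g,f_n\>)_{n\in I}$ (bounded since $F$ is a frame). The left side is $\ol{(Cg)_{n+1}}=\ol{(LCg)_n}$ and the right side is $\ol{(Ch)_n}$; so the existence of $h$ with $\<f_{n+1},g\>=\<f_n,h\>$ for all $n$ is precisely the statement $LCg=Ch$ for some $h\in\calH$, i.e.\ $LCg\in\ran C$. Hence $g\in\dom T^*$ iff $g\in(LC)^{-1}\ran C$, which is the second equality. Here one should check the indexing carefully in both cases $I=\N$ and $I=\Z$: for $I=\Z$ the left shift $L$ is surjective and the reindexing is clean; for $I=\N$ the left shift drops the zeroth coordinate, and the condition $\<f_{n+1},g\>=\<f_n,h\>$ ranges over $n\ge 0$, matching $(LCg)_n=(Ch)_n$ for $n\ge 0$ with the chosen convention $(Lx)_n=x_{n+1}$.

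Finally, closedness of $\dom T^*$: this follows because $LC$ is bounded and $\ran C$ is closed. The latter is the key input — $\ran C$ is closed exactly because $F$ is a frame (the frame inequality $A\|g\|^2\le\|Cg\|^2\le B\|g\|^2$ forces $C$ to be bounded below on $\calH$, hence to have closed range). Then $(LC)^{-1}\ran C$ is the preimage of a closed set under a bounded (hence continuous) operator, so it is closed. I expect the main obstacle to be purely bookkeeping: getting the shift index and complex conjugation conventions consistent between the two equalities and across $I=\N$ versus $I=\Z$, and making sure the ``top end'' issue for $I=\N$ (i.e.\ that $Tf_n=f_{n+1}$ holds for every $n\in\N$, so no extra condition is lost or spuriously added) is addressed; everything else is a short, routine chain of equivalences.
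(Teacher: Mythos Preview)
Your proposal is correct and follows essentially the same route as the paper: reduce the adjoint-domain condition to the spanning vectors via $Tf_n=f_{n+1}$ and bilinearity, identify the resulting condition with $LCg\in\ran C$, and then observe that $\ran C$ is closed (since $F$ is a frame) so that $(LC)^{-1}\ran C$ is the continuous preimage of a closed set. The only cosmetic difference is that you track complex conjugates explicitly, whereas the paper simply rewrites the condition as $\<g,f_{n+1}\>=\<h,f_n\>$ before matching it with $LCg=Ch$.
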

\begin{proof}
We have $Tf_n = f_{n+1}$, $n\in I$.  From this and the definition of $\dom T^*$ it is clear that $\dom T^*$ is contained in the right hand side of the first equation. Now, let $g,h\in\calH$ such that for all $n\in I$ we have $\<f_{n+1},g\> = \<f_n,h\>$. Let $f\in\calD_F$, $f = \sum_{n\in I'}c_nf_n$, where $I'\subset I$ is finite and $c_n\in\C$, $n\in I'$. Then
$$
\<Tf,g\> = \sum_{n\in I'}c_n\<f_{n+1},g\> = \sum_{n\in I'}c_n\<f_{n},h\> = \<f,h\>.
$$
This proves that $g\in\dom T^*$. For the second equation we note that for $g,h\in\calH$ we have
$$
\left(\<g,f_{n+1}\>\right)_{n\in I} = LCg\qquad\text{and}\qquad\left(\<h,f_n\>\right)_{n\in I} = Ch.
$$
Thus, $g\in\dom T^*$ if and only if there exists $h\in\calH$ such that $LCg = Ch$, that is, $LCg\in\ran C$. Hence, $\dom T^* = (LC)^{-1}\ran C$. As the pre-image of a closed set under a continuous map, $\dom T^*$ is closed.
\end{proof}

The closed graph theorem immediately yields the following corollary.
\begin{cor}
	Let $F = (f_n)_{n\in I}=(T^nf_0)_{n\in I}$ be a linearly independent frame for $\calH$ with generating operator $T : \calD_F\to\calH.$ Then $T^*$ is bounded.
\end{cor}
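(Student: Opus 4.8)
The plan is to combine the previous lemma with the closed graph theorem. The lemma establishes that $\dom T^*$ is a \emph{closed} subspace of $\calH$; hence it is itself a Hilbert space with the norm inherited from $\calH$. The operator $T^* : \dom T^* \to \calH$ is well-defined because $F$ is complete, so $T$ is densely defined and the adjoint exists. The adjoint of any densely defined operator is automatically \emph{closed}: its graph $\{(g, T^*g) : g \in \dom T^*\}$ is closed in $\calH \times \calH$. This is a standard fact — if $(g_k, T^*g_k) \to (g,h)$ then for every $f \in \calD_F$ one has $\<Tf, g_k\> = \<f, T^*g_k\>$, and passing to the limit gives $\<Tf,g\> = \<f,h\>$, so $g \in \dom T^*$ and $T^*g = h$.

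Now I would apply the closed graph theorem in the form: a closed operator between Banach spaces whose domain is a closed subspace (equivalently, a closed operator defined on all of a Banach space) is bounded. Here the ``Banach space'' on the domain side is $\dom T^*$ with the $\calH$-norm, which by the lemma is genuinely closed and hence complete; the target is $\calH$. Since $T^*$ is closed and everywhere defined on this Banach space, the closed graph theorem yields that $T^* : \dom T^* \to \calH$ is bounded.

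There is essentially no obstacle here — this is a one-line deduction, which is why the excerpt labels it a corollary ``immediately'' following from the closed graph theorem. The only point requiring a moment's care is making sure the closed graph theorem is being invoked correctly: it requires the domain to be complete, and that completeness is \emph{precisely} what the closedness assertion in Lemma~\ref{l:DT*} supplies. Without that, $T^*$ need not be bounded (a generic densely defined operator has a closed but not bounded adjoint). So the whole content of the corollary is packaged into the identity $\dom T^* = (LC)^{-1}\ran C$ from the lemma, which exhibits $\dom T^*$ as the preimage of a closed set under a continuous linear map. Once that is in hand, write: $T^*$ is closed and defined on the Banach space $\dom T^*$, hence bounded by the closed graph theorem. $\qed$
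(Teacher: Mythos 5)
Your argument is correct and is exactly the paper's intended proof: Lemma~\ref{l:DT*} gives that $\dom T^*$ is closed, the adjoint is automatically a closed operator, and the closed graph theorem then yields boundedness of $T^*$ on the Banach space $\dom T^*$. Your spelled-out version adds nothing beyond what the paper compresses into ``the closed graph theorem immediately yields the corollary,'' so it matches the paper's route.
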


The following theorem is our main result in this section. It shows in particular the surprising fact that the generating operator of a linearly independent frame is closable if and only if it is bounded. Note that we do not assume a priori that the given frame is linearly independent. For $I=\N,$ the equivalence of (i) and (iii) was already proved in \cite{chr}.

%
%The proof of the next lemma is straightforward.
%
%\begin{lem}\label{l:helpy}
%For $A\in L(\calH)$ and a subspace $\calL\subset\calH$ we have
%\begin{equation}\label{e:1st}
%\ol{A^*\calL^\perp} = \left(A^{-1}\ol\calL\right)^\perp\,\subset\,\left(A^{-1}\calL\right)^\perp.
%\end{equation}
%In particular, if $\calL$ is closed, then
%\begin{equation}\label{e:2nd}
%\ol{A^*\calL^\perp} = \left(A^{-1}\calL\right)^\perp.
%\end{equation}
%\end{lem}
%The next  result shows in particular that if a frame has the form
%$(T^n f_1)_{n=0}^\infty$, then the operator $T$ is bounded if and only if it is closed.
%The equivalence (i) $\Leftrightarrow$ (iii) is known, see \cite{ch3,chr}.
%Let $L$ and $R$ denote the left-shift operator and right-shift operator on $\ell^2(I),$ respectively.

\begin{thm}\label{t:boundedly_generated}
	Let $F = (f_n)_{n\in I}$ be a frame for $\calH$ with synthesis operator $U$. Then the following statements are equivalent.
	\begin{enumerate}
		\item[{\rm (i)}]   $F$ has the form $(T^n f_0)_{n\in I}$ with a bounded operator $T$.
		\item[{\rm (ii)}]  $F$ has the form $(T^n f_0)_{n\in I}$ with a closable operator $T$.
		\item[{\rm (iii)}] $\ker U$ is $R$-invariant.
	\end{enumerate}
	In case these equivalent conditions are satisfied, the closure of the generating operator $T$ of $F$ is given by
	\begin{equation}\label{e:formula}
	\ol T = URU^\dagger,
	\end{equation}
	where $U^\dagger = U^*(UU^*)^{-1}$ denotes the pseudo-inverse of $U$.
\end{thm}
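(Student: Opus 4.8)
The implications $\text{(i)} \Rightarrow \text{(ii)}$ is trivial, so the real content is $\text{(ii)} \Rightarrow \text{(iii)}$, $\text{(iii)} \Rightarrow \text{(i)}$, and the formula \eqref{e:formula}. I would begin with $\text{(iii)} \Rightarrow \text{(i)}$, since this also produces the formula. Assume $\ker U$ is $R$-invariant. The synthesis operator $U : \ell^2(I) \to \calH$ is bounded and surjective (frames are complete), so the pseudo-inverse $U^\dagger = U^*(UU^*)^{-1}$ is a well-defined bounded right inverse with $UU^\dagger = \Id_\calH$ and $U^\dagger U = P$, the orthogonal projection onto $(\ker U)^\perp = \ran U^\dagger$. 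Define $T := URU^\dagger \in L(\calH)$. The key computation is to check that $T$ shifts the frame: writing $e_n$ for the standard basis of $\ell^2(I)$, one has $f_n = Ue_n$, and I want $Tf_n = f_{n+1}$, i.e.\ $URU^\dagger Ue_n = Ue_{n+1} = URe_n$. Since $U^\dagger U = P$ is the projection onto $(\ker U)^\perp$, this amounts to $UR(Pe_n) = URe_n$, i.e.\ $UR(e_n - Qe_n) = URe_n$ where $Q = \Id - P$ projects onto $\ker U$; so it suffices that $RQe_n \in \ker U$, and more generally that $R$ maps $\ker U$ into $\ker U$ — which is exactly the $R$-invariance hypothesis, applied to the vector $Qe_n \in \ker U$. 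Hence $Tf_n = f_{n+1}$ for all $n \in I$, and by induction $T^n f_0 = f_n$, giving (i). (For $I = \Z$ the same computation works verbatim since $R$ is then invertible and the generating operator is automatically bijective.)

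For $\text{(ii)} \Rightarrow \text{(iii)}$: suppose $F = (T^n f_0)_{n\in I}$ with $T$ closable and defined on $\calD_F = \linspan\{f_n\}$. First note that a frame with a generating operator is linearly independent: if some finite relation $\sum c_n f_n = 0$ held, applying the generating operator repeatedly (or invoking the uniqueness in \cite{ch3,chr}) forces all $c_n = 0$ — more carefully, one should observe that existence of $T$ with $Tf_n = f_{n+1}$ already requires independence, as recalled in the text. So $F$ is a linearly independent frame and Lemma \ref{l:DT*} applies together with its corollary: $T^*$ is bounded. Now I use the standard fact that a densely defined closable operator satisfies $\ol T = T^{**} = (T^*)^*$, and since $T^*$ is bounded, $(T^*)^*$ is bounded; hence $\ol T$ is a bounded operator on $\calH$. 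Thus $F = (\ol T^n f_0)_{n\in I}$ with $\ol T$ bounded, which is condition (i). To land in (iii), I then invoke the characterization from \cite{ch3,chr} quoted in the text: the generating operator of a linearly independent frame is bounded if and only if $\ker U$ is $R$-invariant. (Alternatively, one derives $R$-invariance of $\ker U$ directly: if $c = (c_n) \in \ker U$, then $\sum_n c_n f_n = 0$; one wants $\sum_n c_n f_{n+1} = 0$, i.e.\ $\sum_n c_n \ol T f_n = 0$, which holds by applying the bounded operator $\ol T$ to $\sum_n c_n f_n = 0 = U c$. The subtlety for $I = \Z$ is handling the index shift at both ends, but boundedness of $\ol T$ makes the term-by-term application legitimate since the partial sums converge.)

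Finally, the formula $\ol T = URU^\dagger$: under the equivalent conditions, $\ol T$ is the (unique, bounded) extension of the generating operator, and the operator $URU^\dagger$ constructed in the $\text{(iii)} \Rightarrow \text{(i)}$ step is bounded, agrees with $\ol T$ on each $f_n$, hence on the dense set $\calD_F$, hence everywhere by continuity. So \eqref{e:formula} follows from the construction itself.

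**Main obstacle.** The one genuinely delicate point is the self-adjointness/closure identity $\ol T = (T^*)^*$ for the (not-a-priori-bounded) generating operator. It requires that $T$ is densely defined (true, since $\calD_F$ is dense as $F$ is complete) and that $T$ is closable (the hypothesis), and then standard operator theory gives $\ol T = T^{**}$; combined with the corollary to Lemma \ref{l:DT*} that $T^*$ is bounded, we get $\ol T \in L(\calH)$. The rest is bookkeeping: converting ``bounded generating operator'' to ``$R$-invariant kernel'' via the cited \cite{ch3,chr} result, and verifying the shift identity $Tf_n = f_{n+1}$ against the $R$-invariance in the converse direction. For $I = \Z$, one must additionally keep track of surjectivity of the generating operator and the invertibility of $R$, but no new idea is needed there.
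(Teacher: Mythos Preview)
Your proof is correct and takes a genuinely different route from the paper's in two places.

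For (iii)$\Rightarrow$(i), the paper first establishes linear independence of $F$ directly from the $R$-invariance of $\ker U$ (via a finite-dimensionality contradiction argument) and then invokes the result from \cite{ch3,chr} that the generating operator of a linearly independent frame is bounded iff $\ker U$ is $R$-invariant. You instead construct $T = URU^\dagger$ explicitly and verify $Tf_n = f_{n+1}$ by a one-line computation using the $R$-invariance of $\ker U$, bypassing both the linear-independence step and the external citation. This is more self-contained and delivers the formula \eqref{e:formula} as a by-product rather than as a separate calculation (the paper proves the formula afterwards by expanding $\ol T f$ in the canonical dual frame coefficients $c = CS^{-1}f$).

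For (ii)$\Rightarrow$(iii), the paper argues directly: closability of $T$ means $T^*$ is densely defined, hence (since $\dom T^*$ is closed by Lemma~\ref{l:DT*}) $\dom T^* = (LC)^{-1}\ran C = \calH$, and then the orthogonality identity $[(LC)^{-1}\ran C]^\perp = \ol{UR\ker U}$ immediately yields $UR\ker U = \{0\}$, i.e.\ (iii). Your route instead passes through (i): from $\dom T^* = \calH$ and boundedness of $T^*$ you conclude $\ol T = T^{**}$ is bounded, giving (i), and then obtain (iii) either from \cite{ch3,chr} or from the elementary computation $c\in\ker U\Rightarrow \ol T\,Uc = U(Rc) = 0$. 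Both arguments rest on the same key observation (closability together with closedness of $\dom T^*$ forces $\dom T^* = \calH$); the paper's is marginally more direct, while yours makes the surprising implication ``closable $\Rightarrow$ bounded'' fully explicit as an intermediate statement.

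One small point: in your (iii)$\Rightarrow$(i) for $I=\Z$ you assert that ``the generating operator is automatically bijective,'' but bijectivity of the generating operator on $\calD_F$ presupposes linear independence, which your route has not established at that stage. The paper handles this by proving linear independence first; your construction of $T = URU^\dagger$ gives $Tf_n = f_{n+1}$ regardless, but to write $f_n = T^nf_0$ for negative $n$ you need invertibility, which you have not justified. This is a minor gap specific to the $\Z$-case, and is no worse than the paper's own somewhat sketchy ``similar reasoning'' for $I=\Z$.
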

\begin{proof}
	Let $C$ denote the analysis operator of $F$, i.e., $C = U^*$. The implication (i)$\Sra$(ii) is trivial. For the other implications we will make use of the easily proved relation
	\begin{equation}\label{e:applied_twice}
	\left[(LC)^{-1}\ran C\right]^\perp = \ol{(LC)^*\ker U} = \ol{UR\ker U}.
	\end{equation}
	(ii)$\Sra$(iii). By Lemma \ref{l:DT*}, $\dom T^* = (LC)^{-1}\ran C$ is closed. From the well known fact that a densely defined operator is closable if and only if its adjoint is densely defined we conclude that $(LC)^{-1}\ran C = \calH$. Making use of \eqref{e:applied_twice}, we obtain $UR\ker U = \{0\}$, which is (iii).
	
	(iii)$\Sra$(i). Let us first prove that $\{f_n : n\in I\}$ is linearly independent. We prove this for $I = \N$. A similar reasoning applies to the case $I = \Z$. Towards a contradiction, assume that there are $c_1,\ldots,c_N\in\C$, not all equal to zero, such that $\sum_{n=0}^Nc_nf_n = 0$. Without loss of generality, we may assume that $c_N\neq 0$. By (iii),
	\begin{equation}\label{e:toll}
	\sum_{n=0}^Nc_nf_{n+k} = 0\qquad\text{for all $k\in\N$}.
	\end{equation}
	Then $f_N\in\linspan\{f_0,\ldots,f_{N-1}\}$. But also, \eqref{e:toll} for $k=1$ implies that
	$$
	f_{N+1}\in\linspan\{f_1,\ldots,f_N\} = \linspan\{f_0,\ldots,f_{N-1}\}.
	$$
	Inductively, we conclude that $f_n\in\linspan\{f_0,\ldots,f_{N-1}\}$ for all $n\in\N$, which clearly is a contradiction as $\calH$ is infinite-dimensional. Hence, $F$ is linearly independent. The claim now follows from \cite{ch3,chr}.
	%it indeed has the form $(T^nf_0)_{n\in I}$ with a linear operator $T$}. Applying both Lemma \ref{l:DT*} and \eqref{e:applied_twice} yields $(\dom T^*)^\perp = \ol{UR\ker U} = \{0\}$ and hence $\dom T^* = \calH$. But as $T^*$ is a closed operator, the closed graph theorem implies that $T^*\in L(\calH)$. Hence, $T$ is bounded.
	
	To prove \eqref{e:formula}, assume that the frame has the form $F=(T^nf_0)_{n\in \N}$ with a bounded operator $T$. Denote the frame operator of $F$ by $S$, i.e., $S = UC$. Let $f\in\calH$ and put $c := CS^{-1}f = (\<S^{-1}f,f_n\>)_{n\in I}\in\ell^2(I)$. Then $f = SS^{-1}f = \sum_{n\in I}c_nf_n$ and hence
	\begin{align*}
	\ol Tf = \sum_{n\in I}c_nf_{n+1} = \sum_{n\in I}(Rc)_nf_n = \sum_{i\in I}\left(RCS^{-1}f\right)_nf_n = URCS^{-1}f,
	\end{align*}
	which proves \eqref{e:formula}, since $U^\dagger = CS^{-1}$.
\end{proof}

\begin{rem}
Note that the question whether an overcomplete linearly independent frame is generated by a bounded operator highly depends on the order of its elements. This leads to the more general problem of finding necessary and sufficient conditions on a given linearly independent frame under which there exists a permutation of its elements resulting in a sequence that can be represented by a bounded operator. We will not address this  problem here.
\end{rem}

%While the above results and in particular Theorem \ref{t:boundedly_generated} hold for frame orbits indexed over each $\N$ and $\Z$, there is clearly a difference between subspace invariance under the (unilateral) right shift on $\ell^2(\N)$ and the (bilateral) right shift on $\ell^2(\Z)$. For example, $(R^ne_0)_{n\in\N}$ is a Riesz basis for $\ell^2(\N)$ and $\ran R\neq\ell^2(\N)$. As the next proposition shows, this cannot happen for frame orbits indexed over $\Z$.
%
%\begin{prop}
%Let $T\in L(\calH)$ and $f_0\in\calH$ such that $(T^nf_0)_{n\in\Z}$ is a frame for $\calH$. Then $\ran T = \calH$.
%\end{prop}
%\begin{proof}
%Let $U$ be the synthesis operator of $(T^nf_0)_{n\in\Z}$. From Theorem \ref{t:boundedly_generated} we know that $\ker U$ is $R$-invariant. Therefore, with respect to the decomposition $\ell^2(\Z) = \ker U\oplus\ran U^*$ we can write
%$$
%R = \mat{R_{11}}{R_{12}}{0}{R_{22}}
%\qquad\text{and}\qquad
%L = R^* = \mat{R_{11}^*}{0}{R_{12}^*}{R_{22}^*}.
%$$
%From $RR^* = \Id$ we immediately see that $R_{22}R_{22}^* = \Id_{\ran U^*}$. Theorem \ref{t:boundedly_generated} also yields $T = URU^*S^{-1}$, where $S = UU^*$. Hence, $\ran T = UR\ran U^*$. For $f\in\calH$ define $x := R_{22}^*U^*S^{-1}f$. Then $x\in\ran U^*$ and (note that $UR_{12} = 0$)
%$$
%URx = UR_{22}x = UR_{22}R_{22}^*U^*S^{-1}f = f.
%$$
%Thus, $f\in\ran T$.
%\end{proof}

\section{\texorpdfstring{Orbits of the form $(T^nf_0)_{n\in \N}$}{Orbits indexed by the naturals}}\label{71219b}
In this section we consider frames $(T^nf_0)_{n\in\N}$ with a linear operator $T$ that is bounded and defined on the entire Hilbert space, i.e., $T\in L(\calH)$. We begin with a lemma concerning the similarity of operator-vector pairs.

\begin{lem}\label{l:similar-1}
Let $\calH_1$ and $\calH_2$ be Hilbert spaces and $(T_j,f_j)\in L(\calH_j)\times\calH_j$, $j=1,2$, such that $(T_1,f_1)\cong (T_2,f_2)$. Then   $(T_1^nf_1)_{n\in\N}$ is a frame for $\calH_1$ if and only if $(T_2^nf_2)_{n\in\N}$ is a frame for $\calH_2$. In the affirmative case, the operator $V$ in \eqref{e:similar} is unique.
\end{lem}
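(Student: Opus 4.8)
The plan is to exploit the identity $T_2^n = VT_1^nV^{-1}$, which follows by a one-line induction from $T_2 = VT_1V^{-1}$, together with $f_2 = Vf_1$. Combining the two gives
\[
T_2^nf_2 = VT_1^nV^{-1}Vf_1 = VT_1^nf_1\qquad\text{for all }n\in\N,
\]
so that the sequence $(T_2^nf_2)_{n\in\N}$ is precisely the image of $(T_1^nf_1)_{n\in\N}$ under the bounded bijection $V$.

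For the frame equivalence I would invoke the standard fact that a bounded invertible operator maps frames to frames. Concretely, if $(g_n)_{n\in\N}$ is a frame for $\calH_1$ with bounds $A,B$, then for every $g\in\calH_2$ one has $\sum_n|\<g,Vg_n\>|^2 = \sum_n|\<V^*g,g_n\>|^2$, which lies between $A\|V^*g\|^2$ and $B\|V^*g\|^2$; since $V^*\in GL(\calH_2,\calH_1)$, the norm $\|V^*g\|$ is equivalent to $\|g\|$, so $(Vg_n)_{n\in\N}$ is a frame for $\calH_2$. Applying this with $(g_n) = (T_1^nf_1)_{n\in\N}$ yields the implication ``$(T_1^nf_1)$ frame $\Rightarrow$ $(T_2^nf_2)$ frame'', and applying the same fact to $V^{-1}\in GL(\calH_2,\calH_1)$ and the sequence $(T_2^nf_2)_{n\in\N}$ gives the converse. (Equivalently, one notes that $\cong$ is symmetric: $(T_2,f_2)\cong(T_1,f_1)$ via $V^{-1}$.)

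For the uniqueness statement, assume the equivalent frame conditions hold and suppose $V,W\in GL(\calH_1,\calH_2)$ both realize the equivalence \eqref{e:similar}, i.e.\ $T_2 = VT_1V^{-1} = WT_1W^{-1}$ and $f_2 = Vf_1 = Wf_1$. Then the displayed identity, applied to both $V$ and $W$, gives
\[
VT_1^nf_1 = T_2^nf_2 = WT_1^nf_1\qquad\text{for all }n\in\N,
\]
so $V$ and $W$ agree on every vector of the orbit, hence on $\linspan\{T_1^nf_1 : n\in\N\}$. Since $(T_1^nf_1)_{n\in\N}$ is a frame for $\calH_1$, it is complete, so this span is dense in $\calH_1$; by continuity of $V$ and $W$ they coincide on all of $\calH_1$, i.e.\ $V=W$.

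All steps are elementary, so there is no genuine obstacle; the only points deserving care are that both $V$ and $V^{-1}$ (equivalently $V^*$ and $(V^*)^{-1}$) must be used as bounded operators when transferring the frame inequalities, and that it is exactly the completeness of a frame that furnishes the density needed to close the uniqueness argument.
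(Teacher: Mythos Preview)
Your proof is correct and follows essentially the same approach as the paper: both establish $T_2^nf_2 = VT_1^nf_1$ and transfer the frame inequalities via the computation $\sum_n|\<g,T_2^nf_2\>|^2 = \sum_n|\<V^*g,T_1^nf_1\>|^2$. For uniqueness, the paper writes an arbitrary $f\in\calH_1$ in a frame expansion $f = \sum_n c_nT_1^nf_1$ and observes that $Vf = \sum_n c_nT_2^nf_2$ is forced, whereas you argue that any two intertwiners $V,W$ agree on each orbit vector and hence on the dense span; these are two phrasings of the same density argument.
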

\begin{proof}
 If $V\in GL(\calH_1,\calH_2)$ is an operator as in \eqref{e:similar}, for $g\in\calH_2$ we have
$$
\sum_{n=0}^\infty|\<g,T_2^nf_2\>|^2 = \sum_{n=0}^\infty|\<g,VT_1^nV^{-1}Vf_1\>|^2 = \sum_{n=0}^\infty|\<V^*g,T_1^nf_1\>|^2.
$$
This shows that $(T_1^nf_1)_{n\in\N}$ is a frame for $\calH_1$ if and only if $(T_2^nf_2)_{n\in\N}$ is a frame for $\calH_2$. Moreover, for $f\in\calH_1$, $f = \sum_{n=0}^\infty c_nT_1^nf_1$, where $c\in\ell^2(\N)$, we have
$$
Vf = \sum_{n=0}^\infty c_nVT_1^nf_1 = \sum_{n=0}^\infty c_nT_2^nVf_1 = \sum_{n=0}^\infty c_nT_2^nf_2.
$$
Thus, $V$ is unique.
\end{proof}

In what follows, it is our aim to characterize the operator-vector pairs $(T,f_0)\in L(\calH)\times\calH$ for which the orbit $(T^nf_0)_{n\in\N}$ is a frame for $\calH$. It was already proved in \cite{cmpp} that the orbit $(T^nf_0)_{n\in\N}$ is a Riesz basis for $\calH$ if and only if
$$
(T,f_0)\cong (M_{\T}^+,1_\T).
$$
We will now show that this statement can be extended to obtain a characterization for all frame orbits $(T^nf_0)_{n\in\N}$ generated by a bounded operator $T$. To this end, we first state the following lemma. As Theorem \ref{t:charac_N} below will reveal, the frames appearing in it are -- up to similarity -- exactly the frames that we aim to characterize. Recall the notations $\calH_h$, $A_h$ and $\phi_h$ from \eqref{e:objects}.

\begin{lem}\label{l:prototype}
Let $h\in\frakI^*\cup\{0\}$. Then
\begin{equation}\label{e:yeah}
A_h^n\phi_h = P_{\calH_h}(z^n),\qquad n\in\N.
\end{equation}
In particular, if $h\neq 0$, $(A_h^n\phi_h)_{n\in\N}$ is an overcomplete Parseval frame for the infinite-dimensional Hilbert space $\calH_h$.
\end{lem}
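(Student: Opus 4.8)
The plan is to establish the identity \eqref{e:yeah} by induction on $n$, and then to read off the ``in particular'' statement from the fact that $(z^n)_{n\in\N}$ is an orthonormal basis of $L^2_+$ together with the standard behaviour of Parseval frames under orthogonal projections.

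For the induction, the base case $n=0$ is immediate, since $z^0=1_\T$ and hence $A_h^0\phi_h=\phi_h=P_{\calH_h}1_\T=P_{\calH_h}(z^0)$. For the step from $n$ to $n+1$ I would use that $\calH_h\subseteq L^2_+$ and that $A_h$ is, by definition, the compression $P_{\calH_h}\big(M_\T^+(\,\cdot\,)\big)$ on $\calH_h$; together with the induction hypothesis this gives $A_h^{n+1}\phi_h=A_h\big(P_{\calH_h}(z^n)\big)=P_{\calH_h}\big(z\cdot P_{\calH_h}(z^n)\big)$. The crux is then the orthogonal decomposition $L^2_+=\calH_h\oplus hL^2_+$ built into the definition of $\calH_h$: since $z^n\in L^2_+$ for $n\ge 0$, we may write $z^n-P_{\calH_h}(z^n)=hg_n$ with $g_n\in L^2_+$, whence $z\cdot P_{\calH_h}(z^n)=z^{n+1}-h(zg_n)$; and because $zg_n\in L^2_+$ (multiplication by $z$ preserves $L^2_+$), the term $h(zg_n)$ lies in $hL^2_+=L^2_+\ominus\calH_h$ and is annihilated by $P_{\calH_h}$. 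Hence $A_h^{n+1}\phi_h=P_{\calH_h}(z^{n+1})$, which closes the induction and proves \eqref{e:yeah}.

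For the ``in particular'' part, assume $h\ne 0$; then $\calH_h$ is infinite-dimensional (as noted after \eqref{e:objects}, cf.\ Appendix \ref{ap:subspaces}). Since $(z^n)_{n\in\N}$ is an orthonormal basis of $L^2_+$, it is a Parseval frame for $L^2_+$; writing $P:=P_{\calH_h}$ and using $P=P^*=P^2$ together with $Pf=f$ for $f\in\calH_h$, we get $\<f,P(z^n)\>=\<f,z^n\>$, so $\sum_{n\in\N}|\<f,A_h^n\phi_h\>|^2=\sum_{n\in\N}|\<f,z^n\>|^2=\|f\|^2$ for every $f\in\calH_h$, the last equality because $f\in L^2_+$ has vanishing negative Fourier coefficients. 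Thus $(A_h^n\phi_h)_{n\in\N}$ is a Parseval frame for $\calH_h$. To see it is overcomplete, i.e.\ not a Riesz basis, I would invoke that a Parseval frame is a Riesz basis exactly when it is an orthonormal basis, and that an orthonormal basis consists of unit vectors; it therefore suffices to exhibit one $n$ with $\|P(z^n)\|<1$. But if $\|P(z^n)\|=\|z^n\|=1$ for every $n$, then $z^n\in\calH_h$ for every $n$, forcing $L^2_+=\ol{\linspan}\{z^n:n\in\N\}\subseteq\calH_h$ and hence $hL^2_+=\{0\}$, which is impossible since $h=h\cdot 1_\T\in hL^2_+$ and $\|h\|=1$. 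This gives overcompleteness.

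There is no deep obstacle here; the single genuinely non-routine point is noticing that the decomposition $L^2_+=\calH_h\oplus hL^2_+$ together with the $M_\T^+$-invariance of $hL^2_+$ makes the inductive step collapse to $P_{\calH_h}(z^{n+1})$. The remaining ingredients — that $(z^n)_{n\ge 0}$ is an orthonormal basis of $L^2_+$, and that a Parseval frame is a Riesz basis iff it is an orthonormal basis — are standard.
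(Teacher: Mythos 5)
Your proof is correct and follows essentially the same route as the paper: the inductive step is exactly the paper's observation that $L^2_+=\calH_h\oplus hL^2_+$ with $hL^2_+$ invariant under multiplication by $z$, which the paper phrases as the operator identity $P_{\calH_h}M_\T^+P_{\calH_h}=P_{\calH_h}M_\T^+$ and then raises to the $n$-th power instead of inducting. Your explicit verification of the Parseval property and of overcompleteness (via the unit-norm criterion) just spells out what the paper leaves implicit, and is fine.
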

\begin{proof}
Clearly, \eqref{e:yeah} holds for $h=0$. Thus, let $h\in\frakI^*$. Then  $\calH_h$ is always infinite-dimensional by Lemma \ref{l:rr}. The subspace $hL^2_+$ is invariant under $M_\T^+$ by Theorem \ref{t:beurling}. Since $L^2_{+}=hL^2_{+}\oplus \calH_h$, for $f\in L^2_+$ we have
\[
P_{\calH_h} M_\T^+f =P_{\calH_h} M_\T^+P_{\calH_h}f +  P_{\calH_h} M_\T^+P_{hL^2_+}f = P_{\calH_h} M_\T^+P_{\calH_h}f.
\]
Thus $P_{\calH_h} M_\T^+P_{\calH_h}=P_{\calH_h} M_\T^+,$ which implies that
\begin{equation}\label{0604a}(P_{\calH_h} M_\T^+P_{\calH_h})^n =P_{\calH_h} (M_\T^+)^n.
 \end{equation}
Therefore, by definition of $A_h$ and $\phi_h$, we get
\[
A_h^n\phi_h = P_{\calH_h} (M_\T^+)^nP_{\calH_h}1_{\T} = P_{\calH_h} (M_\T^+)^n 1_{\T}=P_{\calH_h} (z^n),
\]
which is \eqref{e:yeah}.
\end{proof}

We will frequently use the following fact; the definition of $C_0$-contractions is given in Appendix \rmref{ap:contractions}.

\begin{lem}[{\cite[Ch.\ III, Prop.\ 4.3]{nfbk}}]\label{l:C0}
Each operator $A_h$, $h\in\frakI$, is a $C_0$-contraction  and its minimal function is precisely $h$.
\end{lem}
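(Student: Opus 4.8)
This is a classical fact of Sz.-Nagy--Foias model theory \cite[Ch.\ III]{nfbk}, and the plan is to reprove it in the notation of \eqref{e:objects}. Fix $h\in\frakI$; if $h$ is a unimodular constant then $\calH_h=\{0\}$ and there is nothing to prove, so assume $h$ is non-constant. First I would record the elementary facts: $M_\T^+$ is an isometry, so $\|A_hf\|=\|P_{\calH_h}M_\T^+f\|\le\|f\|$ for $f\in\calH_h$, i.e.\ $A_h$ is a contraction; and since $hL^2_+$ is $M_\T^+$-invariant (by Beurling's theorem, as in the proof of Lemma \ref{l:prototype}), the triangular identity \eqref{0604a} gives $A_h^n=P_{\calH_h}(M_\T^+)^n|_{\calH_h}$, hence $p(A_h)=P_{\calH_h}M_p|_{\calH_h}$ for every polynomial $p$, where $M_u$ denotes multiplication by $u$ on $L^2_+$.

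Next I would prove that $A_h$ is completely non-unitary by showing $A_h^n\to 0$ strongly. For $f\in\calH_h\subset L^2_+$ we have $(M_\T^+)^nf=z^nf\in L^2_+=\calH_h\oplus hL^2_+$, so
\[
\|A_h^nf\|^2=\|z^nf\|^2-\|P_{hL^2_+}(z^nf)\|^2=\|f\|^2-\|P_+(z^n\bar h f)\|^2 .
\]
Setting $g:=\bar h f\in L^2$ (so $\|g\|=\|f\|$), the subtracted term equals $\|P_+(z^ng)\|^2=\sum_{j\ge -n}|\widehat g(j)|^2$, which tends to $\|g\|^2=\|f\|^2$ as $n\to\infty$ because $(\widehat g(j))_{j\in\Z}\in\ell^2(\Z)$; hence $\|A_h^nf\|\to 0$. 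A unit vector $f$ in a reducing subspace on which $A_h$ restricts to a unitary would satisfy $\|A_h^nf\|=1$ for all $n$, a contradiction, so $A_h$ is c.n.u.\ and its Sz.-Nagy--Foias $H^\infty$ functional calculus is available.

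With complete non-unitarity in hand, the polynomial identity above extends to $u(A_h)=P_{\calH_h}M_u|_{\calH_h}$ for all $u\in H^\infty$: a bounded sequence of polynomials converging weak-$*$ to $u$ forces $M_{p_k}\to M_u$ in the weak operator topology and $p_k(A_h)\to u(A_h)$ likewise, and polynomials are weak-$*$ dense in $H^\infty$. Taking $u=h$: for $f\in\calH_h$ we have $hf\in hL^2_+$, which is orthogonal to $\calH_h$ by the definition $\calH_h=L^2_+\ominus hL^2_+$, so $h(A_h)f=P_{\calH_h}(hf)=0$. Thus $A_h$ is a c.n.u.\ contraction annihilated by the nonzero function $h$, i.e.\ $A_h\in C_0$, and its minimal function $m_{A_h}$ divides $h$. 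For the reverse divisibility I would take any $u\in H^\infty$ with $u(A_h)=0$ and write $1_\T=\phi_h+\psi$ with $\phi_h=P_{\calH_h}1_\T$ and $\psi=P_{hL^2_+}1_\T\in hL^2_+$; then $u=u\phi_h+u\psi$ with $u\psi\in hL^2_+$ (since $u\cdot hL^2_+\subset hL^2_+$) and $u\phi_h\in hL^2_+$ as well, because $u\phi_h\in L^2_+$ and $P_{\calH_h}(u\phi_h)=u(A_h)\phi_h=0$. Hence $u\in hL^2_+$, say $u=hg$ with $g\in L^2_+=H^2$; since $|g|=|u|\le\|u\|_\infty$ a.e.\ on $\T$, the Hardy function $g$ has bounded boundary values, so $g\in H^\infty$ and $h\mid u$. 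Applying this to $u=m_{A_h}$ gives $h\mid m_{A_h}$, whence $m_{A_h}=h$ (inner functions being identified up to a unimodular constant, as in our convention).

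The hard part is the single model-theoretic input: the passage from the triangular identity on polynomials to its $H^\infty$ version, which rests on the weak-$*$ continuity of the functional calculus of a c.n.u.\ contraction --- this is exactly why one must settle the complete non-unitarity of $A_h$ first. Everything else is short Fourier-analytic and Hardy-space bookkeeping. Alternatively, one may simply invoke \cite[Ch.\ III, Prop.\ 4.3]{nfbk} after identifying $A_h$ with the model operator $S(h)$, whose characteristic function is $h$.
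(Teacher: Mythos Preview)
Your argument is correct. Note, however, that the paper does not give its own proof of this lemma: it is simply quoted from \cite[Ch.\ III, Prop.\ 4.3]{nfbk}, so there is no ``paper's approach'' to compare against beyond the citation itself. What you have written is a self-contained reproof of that classical fact in the paper's notation, and each step checks out: the strong convergence $A_h^n\to 0$ (which incidentally reproves Corollary~\ref{c:strst} for the model operator), the compression identity $u(A_h)=P_{\calH_h}M_u|_{\calH_h}$ for $u\in H^\infty$, the annihilation $h(A_h)=0$, and the divisibility argument showing that any $u\in H^\infty$ with $u(A_h)=0$ lies in $hL^2_+$, hence in $hH^\infty$. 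The only point where you lean on outside machinery is the weak-$*$/WOT continuity of the Sz.-Nagy--Foias calculus, which you correctly flag as the one nontrivial input; this is exactly the content of the reference the paper cites.
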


We are now ready to prove our main result in this section.

\begin{thm}\label{t:charac_N}
Let $T\in L(\calH)$ and $f_0\in\calH$. Then the following statements are equivalent:
\begin{enumerate}
\item[{\rm (i)}]  The system $(T^nf_0)_{n\in\N}$ is a frame for $\calH$.
\item[{\rm (ii)}] $(T,f_0)\cong (A_h,\phi_h)$ for some $h\in\frakI^*\cup\{0\}$.
\end{enumerate}
In case these equivalent conditions are satisfied, the function $h$ in {\rm (ii)} is unique and the frame $(T^nf_0)_{n\in\N}$ is a Riesz basis for $\calH$ if and only if $h=0$.
\end{thm}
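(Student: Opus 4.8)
The plan is to treat the two implications separately: (ii)$\Rightarrow$(i) is immediate from Lemmas \ref{l:prototype} and \ref{l:similar-1}, while for (i)$\Rightarrow$(ii) I would push the frame into the Hardy space $L^2_+$ via its synthesis operator and invoke Beurling's theorem. Concretely, for (ii)$\Rightarrow$(i): if $(T,f_0)\cong(A_h,\phi_h)$ with $h\in\frakI^*\cup\{0\}$, then $(A_h^n\phi_h)_{n\in\N}$ is a frame for $\calH_h$ — an overcomplete Parseval frame when $h\in\frakI^*$ by Lemma \ref{l:prototype}, and the orthonormal basis $(z^n)_{n\in\N}$ of $L^2_+$ when $h=0$ (since then $\calH_0=L^2_+$, $A_0=M_\T^+$, $\phi_0=1_\T$) — so Lemma \ref{l:similar-1} transports the frame property to $(T^nf_0)_{n\in\N}$, and in the case $h=0$ even gives a Riesz basis.

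For (i)$\Rightarrow$(ii), write $f_n=T^nf_0$, let $U$ be the (bounded, surjective) synthesis operator of the frame $(f_n)_{n\in\N}$, and identify $\ell^2(\N)$ with $L^2_+$ through the unitary $e_n\mapsto z^n$, which carries the right shift $R$ to $M_\T^+$. This turns $U$ into a bounded map $\wt U:L^2_+\to\calH$ with $\wt U(z^n)=f_n$. Since $T$ is bounded and $\wt U(z\cdot z^n)=f_{n+1}=Tf_n=T\wt U(z^n)$, density of the polynomials in $L^2_+$ yields the intertwining relation $\wt U M_\T^+=T\wt U$; in particular $K:=\ker\wt U$ is $M_\T^+$-invariant, since $\wt U(c)=0$ implies $\wt U(M_\T^+c)=T\wt U(c)=0$. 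I would then set $h:=0$ if $K=\{0\}$, and otherwise let Beurling's theorem (Theorem \ref{t:beurling}) furnish an inner function $h$ with $K=hL^2_+$. In both cases $K^\perp=\calH_h$ (with the convention $\calH_0=L^2_+$), so $W:=\wt U|_{\calH_h}:\calH_h\to\calH$ is a bounded bijection, hence $W\in GL(\calH_h,\calH)$; since $\calH\cong\calH_h$ is infinite-dimensional, $h$ cannot be a finite Blaschke product, i.e.\ $h\in\frakI^*\cup\{0\}$ by Lemma \ref{l:rr}. It then remains to verify $T=WA_hW^{-1}$ and $f_0=W\phi_h$: for $\psi\in\calH_h$ one decomposes $M_\T^+\psi=P_{\calH_h}M_\T^+\psi+P_KM_\T^+\psi$ and applies $\wt U$, using that $\wt U$ annihilates $K$ and intertwines $M_\T^+$ with $T$, to get $WA_h\psi=\wt U(M_\T^+\psi)=T\wt U(\psi)=TW\psi$; and decomposing $1_\T=\phi_h+(1_\T-\phi_h)$ with $1_\T-\phi_h\in K$ gives $f_0=\wt U(1_\T)=\wt U(\phi_h)=W\phi_h$. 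Hence $(T,f_0)\cong(A_h,\phi_h)$.

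For the supplementary assertions I would argue as follows. The frame $(f_n)_{n\in\N}$ is a Riesz basis exactly when $U$, equivalently $\wt U$, is injective, i.e.\ when $K=\{0\}$, i.e.\ when the $h$ produced above equals $0$; together with uniqueness of $h$ this is the last claim of the theorem. For uniqueness, suppose $(A_h,\phi_h)\cong(A_{h'},\phi_{h'})$. If exactly one of $h,h'$ vanished, say $h=0$, then one orbit would be a Riesz basis and the other overcomplete by Lemma \ref{l:prototype}, contradicting invariance of the Riesz-basis property under similarity; so $h=0\Leftrightarrow h'=0$. If $h,h'\in\frakI^*$, then $A_h$ and $A_{h'}$ are similar $C_0$-contractions with minimal functions $h$ and $h'$ (Lemma \ref{l:C0}), and since the minimal function of a $C_0$-contraction is a similarity invariant we conclude $h=h'$.

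I expect the only real obstacle to lie in the (i)$\Rightarrow$(ii) direction, and there to be essentially the single step of combining $M_\T^+$-invariance of $\ker\wt U$ — which is immediate once $T$ is bounded — with Beurling's description of the shift-invariant subspaces of $L^2_+$; the identification $\ell^2(\N)\cong L^2_+$ and the two intertwining computations are routine bookkeeping. In the uniqueness part the one point requiring care, which I would cite from Sz.-Nagy--Foia\c{s} theory rather than reprove, is that the minimal function of a $C_0$-contraction is invariant under similarity (the degenerate case $h=0$ being separated off because $A_0=M_\T^+$ is not a $C_0$-contraction).
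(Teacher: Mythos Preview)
Your proposal is correct and follows essentially the same route as the paper: push the synthesis operator into $L^2_+$, use the intertwining $TU=UM_\T^+$ to see that $\ker U$ is shift-invariant, apply Beurling to get $\ker U=hL^2_+$, and restrict $U$ to $\calH_h$ to produce the similarity. Your treatment of uniqueness is in fact slightly more careful than the paper's, since you explicitly separate off the degenerate case $h=0$ (where $A_0=M_\T^+$ is not a $C_0$-contraction) before invoking Lemmas \ref{l:C0} and \ref{l:minimals}.
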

\begin{proof}
Due to Lemma \ref{l:prototype} and Lemma \ref{l:similar-1}, (ii) implies (i). Also, $(A_0^n\phi_0)_{n\in\N}$ is a Riesz basis for $\calH_0 = L^2_+$ and thus $(T^nf_0)_{n\in\N}$ is a Riesz basis for $\calH$ if $h=0$.

So, assume that (i) holds. By $U$ denote the $L^2_+$-synthesis operator of the frame $(T^nf_0)_{n\in\N}$, i.e., $U : L^2_+\to\calH$,
$$
U\vphi = \sum_{n=0}^\infty\<\vphi,z^n\>_{L^2_+}T^nf_0,\quad\vphi\in L^2_+.
$$
Since $(T^nf_0)_{n\in\N}$ is a frame for $\calH$, the operator $U$ is bounded and surjective. For $\vphi\in L^2_+$ we have $\<z\vphi,1_\T\>_{L^2_+}=0$ and thus
$$
TU\vphi = \sum_{n=0}^\infty\<\vphi,z^n\>_{L^2_+}T^{n+1}f_0 = \sum_{n=1}^\infty\<z\vphi,z^n\>_{L^2_+}T^nf_0 = U(z\vphi) = UM_\T^+\vphi.
$$
Therefore, $TU = UM_\T^+$. In particular, $\ker U\subset L^2_+$ is invariant under $M_\T^+$. By Theorem \ref{t:beurling} there exists a function $h\in\frakI\cup\{0\}$ such that $\ker U = hL^2_+$. Thus, $\ran U^* = L^2_+\ominus hL^2_+ = \calH_h$. Define the operator $V := U|_{\calH_h}\in L(\calH_h,\calH)$ and note that $V$ is boundedly invertible with $V^{-1} = U^*(UU^*)^{-1}$. In particular, $\calH_h$ is infinite-dimensional. This and Lemma \ref{l:rr} imply that $h\in\frakI^*\cup\{0\}$. Applying the operator $V^{-1}$ from the right to the equation $TU = UM_\T^+$ yields
$$
T = UM_\T^+V^{-1} = UP_{\ran U^*}M_\T^+V^{-1} = VA_hV^{-1}.
$$
And since also $V\phi_h = UP_{\ran U^*}1_\T = U1_\T = f_0$, (ii) follows. If $(T^nf_0)_{n\in\N}$ is a Riesz basis for $\calH$, then $\ker U = \{0\}$ and hence $h=0$.

The fact that $h$ in (ii) is unique follows from Lemma \ref{l:C0} and Lemma \ref{l:minimals}.
\end{proof}

\begin{rem}\label{r:para}
Theorem \ref{t:charac_N} provides a parametrization of the pairs $(T,f_0)$ generating a frame for $\calH$. Indeed, for each $h\in\frakI^*\cup\{0\}$ and every $V\in GL(\calH_h,\calH)$ the pair $(VA_hV^{-1},V\phi_h)$ generates a frame for $\calH$ and each frame $(T^nf_0)_{n\in\N}$ for $\calH$ with $T\in L(\calH)$ and $f_0\in\calH$ is of this form with unique $V$ and $h$ (cf.\ Lemma \ref{l:similar-1}). Hence, the parametrization is injective.
\end{rem}

As a consequence of Theorem \ref{t:charac_N} we now prove that if
$(T^nf_0)_{n\in\N}$ is an overcomplete frame and $T$ is bounded, then $T^nf\to 0$
as $n\to  \infty$ for all $f\in \calH.$ As already discussed in Section \ref{71218c}
this implies that the classical frames in $\ltr,$ e.g., Gabor frames, wavelet frames,
and frames of translates, only have representations of the form
$(T^nf_0)_{n\in\N}$ with a bounded operator $T$ if they form a Riesz basis.

\begin{cor}\label{c:strst}
Let $T\in L(\calH)$ and $f_0\in\calH$ such that $(T^nf_0)_{n\in\N}$ is an overcomplete frame for $\calH$. Then for each $f\in\calH$ we have that
\begin{equation}\label{e:00}
T^nf\to 0
\qquad\text{and}\qquad
(T^*)^{n}f\to 0\quad\text{as }n\to\infty.
\end{equation}
In particular, $(T^nf_0)_{n\in\N}$ does not contain any Riesz sequence.
\end{cor}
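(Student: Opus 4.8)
The plan is to reduce everything to the prototype frame $(A_h^n\phi_h)_{n\in\N}$ on $\calH_h$ using Theorem \ref{t:charac_N} and then exploit the fact that $A_h$ is a $C_0$-contraction. First I would note that since $(T^nf_0)_{n\in\N}$ is an overcomplete frame, Theorem \ref{t:charac_N} gives $h\in\frakI^*$ (so $h\neq 0$, since $h=0$ corresponds exactly to the Riesz basis case) and an operator $V\in GL(\calH_h,\calH)$ with $T = VA_hV^{-1}$ and hence $T^n = VA_h^nV^{-1}$. Therefore $\|T^nf\| = \|VA_h^nV^{-1}f\|\le\|V\|\,\|A_h^n(V^{-1}f)\|$, so it suffices to show $A_h^ng\to 0$ for every $g\in\calH_h$; the statement for $T^*$ is entirely analogous, using $T^* = (V^{-1})^*A_h^*V^*$ and that $A_h^*$ (being unitarily equivalent to $A_{\tilde h}$ for the associated inner function, cf.\ the appendix) is again a $C_0$-contraction. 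The point is that a $C_0$-contraction is in particular a $C_{00}$-contraction, i.e., both $A_h^n\to 0$ and $(A_h^*)^n\to 0$ strongly; this is the defining membership in the Nagy--Foias class $C_{0\cdot}\cap C_{\cdot 0}$, and I would quote it from the appendix on contractions together with Lemma \ref{l:C0}.

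If one prefers a self-contained argument avoiding the $C_{00}$ black box, I would instead argue directly on $\calH_h = L^2_+\ominus hL^2_+$. By Lemma \ref{l:prototype}, $A_h^n\phi_h = P_{\calH_h}(z^n)$, and more generally $A_h^n = P_{\calH_h}(M_\T^+)^n|_{\calH_h}$ by \eqref{0604a}. For $g\in\calH_h\subset L^2_+$ one has $\|A_h^ng\| = \|P_{\calH_h}(z^ng)\|\le\|z^ng\| = \|g\|$, and I would show the limit is $0$ by using that $z^ng\to 0$ weakly in $L^2$ (since the Fourier coefficients of $z^ng$ are shifted off to infinity, so each fixed Fourier mode tends to $0$) while $P_{\calH_h}$ is bounded — but weak convergence alone does not give norm convergence of the projection, so this needs the extra structure that $hL^2_+$ absorbs more and more of $z^ng$. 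Concretely, $z^ng = h\cdot(z^n g/h)$ and $g/h\in L^2_+$ since $g\perp hL^2_+$... actually the clean route is: $z^n g \in z^n L^2_+$, and $\bigcap_{n}z^nL^2_+ = \{0\}$ (a standard fact about the shift on $H^2$), so the orthogonal projections of $z^ng$ onto any fixed subspace tend to $0$ in norm for vectors lying in the decreasing chain — but $z^ng$ is not a fixed vector projected onto a decreasing chain, rather it moves, so the cleanest is still to invoke that $M_\T^+$ is a pure isometry and $h L^2_+$ is eventually "far" from $z^n g$. Given the delicacy here, I expect the cleanest exposition really is the one-line appeal to $C_{00}$.

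The main obstacle, then, is purely bookkeeping: making sure the appendix indeed records that $C_0$-contractions are $C_{00}$ (this is classical Nagy--Foias theory, and Lemma \ref{l:C0} already places $A_h$ among the $C_0$-contractions), and being careful that the similarity $T = VA_hV^{-1}$ with $V$ bounded and boundedly invertible transfers strong convergence to $0$ — which it does, since $\|T^nf\| = \|VA_h^nV^{-1}f\|\le\|V\|\,\|A_h^n(V^{-1}f)\|\to 0$, and symmetrically for the adjoint. Finally, the last sentence ("$(T^nf_0)_{n\in\N}$ does not contain any Riesz sequence") follows immediately: if $(T^{n_k}f_0)_{k}$ were an infinite Riesz sequence, then its elements would be bounded below in norm, i.e., $\inf_k\|T^{n_k}f_0\|>0$, contradicting $T^nf_0\to 0$ from \eqref{e:00} applied with $f=f_0$. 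I would close with exactly that one-sentence deduction.
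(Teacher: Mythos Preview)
Your proposal is correct. Both you and the paper first reduce via Theorem \ref{t:charac_N} to proving $A_h^ng\to 0$ for $g\in\calH_h$, and then diverge: your primary route is to invoke the Nagy--Foias fact that every $C_0$-contraction is $C_{00}$ (together with Lemma \ref{l:C0}), which the paper acknowledges as a valid alternative only in a remark following the proof. The paper's own proof instead carries out precisely the direct computation you attempted but abandoned: using \eqref{0604a} and the projection formula \eqref{e:projection} (namely $P_{\calH_h}f = hP_-(f\ol h)$), one gets
\[
\|A_h^ng\|^2 = \|P_{\calH_h}(z^ng)\|^2 = \|P_-(z^ng\ol h)\|^2 = \sum_{k=1}^\infty|\langle z^ng\ol h,z^{-k}\rangle|^2 = \sum_{k=n+1}^\infty|\langle g\ol h,z^{-k}\rangle|^2,
\]
which is the tail of a convergent series and hence tends to zero. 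This is the ``extra structure'' you were looking for: the key is not that $z^ng\to 0$ weakly, but that $P_{\calH_h}$ is (up to the unimodular factor $h$) the projection $P_-$ applied to $\cdot\,\ol h$, so the shift by $z^n$ pushes everything into the decaying tail of the Fourier expansion of $g\ol h\in L^2$. Your $C_{00}$ argument is shorter but imports heavier machinery; the paper's computation is fully self-contained given the appendix. For the adjoint statement the paper simply cites the literature, while you argue via $A_h^*$; both are fine. Your deduction of the Riesz-sequence claim matches the paper's.
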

\begin{proof}
It was already proved in \cite{ap} that $ (T^*)^{n} f\to 0$ as $n\to\infty$ for all $f\in\calH$ (independent of whether the frame is overcomplete or not). By Theorem \ref{t:charac_N}, there exist an inner function $h\in\frakI^*$ and $V\in GL(\calH_h,\calH)$ such that $T = VA_hV^{-1}$. Therefore, it suffices to show that $A_h^nf\to 0$ for all $f\in\calH_h$. And indeed, for $f\in\calH_h$, via \eqref{0604a} and \eqref{e:projection}, we have
\begin{align*}
\left\|A_h^nf\right\|_{L^2}^2 &= \left\|P_{\calH_h}(M_\T^+)^n f\right\|_{L^2}^2 = \left\|P_{\calH_h} z^nf\right\|_{L^2}^2 = \left\|P_-(z^nf\ol h)\right\|_{L^2}^2 \\&= \sum_{k=1}^\infty\left|\left\<z^nf\ol h,z^{-k}\right\>_{L^2}\right|^2 = \sum_{k=n+1}^\infty\left|\left\<f\ol h,z^{-k}\right\>_{L^2}\right|^2,
\end{align*}
which tends to zero as $n\to\infty$.
\end{proof}

\begin{rem}
Corollary \ref{c:strst} can alternatively be proved by using results about $C_0$-contractions from \cite{nfbk} (see Appendix \ref{ap:contractions}). Indeed, as each operator $A_h$, $h\neq 0$, is a $C_0$-contraction, \eqref{e:00} follows directly from \cite[Ch.\ III, Prop.\ 4.2]{nfbk}.
\end{rem}

Let $T\in L(\calH)$ be an operator for which there exists some $f_0\in\calH$ such that $(T^nf_0)_{n\in\N}$ is a frame for $\calH$. An obvious natural question to ask is whether there exist other vectors $f\in\calH$ for which $(T^nf)_{n\in\N}$ also is a frame for $\calH$ and which vectors these are. Therefore, for arbitrary $T\in L(\calH)$ we define the set
$$
\calV_\N(T) := \left\{f\in\calH : (T^nf)_{n\in\N}\text{ is a frame for }\calH\right\}.
$$
The next proposition shows that from one vector $f_0\in\calV_\N(T)$ (if it exists) we obtain all vectors in $\calV_\N(T)$ by applying all invertible operators from the commutant $\{T\}'$ of $T$ to $f_0$. Recall that $\{T\}'$ is the set of all $V\in L(\calH)$ commuting with $T$.

\begin{prop}\label{p:VT}
Let $T\in L(\calH)$ and let $f_0\in\calV_\N(T)$. Then
\begin{equation}\label{e:VTN}
\calV_\N(T) = \left\{Vf_0 : V\in\{T\}'\cap GL(\calH)\right\}.
\end{equation}
\end{prop}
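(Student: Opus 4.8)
The inclusion ``$\supseteq$'' is immediate and I would dispatch it first: if $V\in\{T\}'\cap GL(\calH)$, then $T^n(Vf_0) = VT^nf_0$ for every $n\in\N$, so $(T^n(Vf_0))_{n\in\N}$ is the image of the frame $(T^nf_0)_{n\in\N}$ under the boundedly invertible operator $V$, hence itself a frame for $\calH$; thus $Vf_0\in\calV_\N(T)$. The substance is the reverse inclusion, and my plan is to pass to the model operators of Theorem \ref{t:charac_N} and exploit the uniqueness of the associated inner function.

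Given $f\in\calV_\N(T)$, I would apply Theorem \ref{t:charac_N} to the two pairs $(T,f_0)$ and $(T,f)$, obtaining inner functions $h,h'\in\frakI^*\cup\{0\}$ together with operators $V_0\in GL(\calH_h,\calH)$ and $V_1\in GL(\calH_{h'},\calH)$ such that
\[
T = V_0A_hV_0^{-1} = V_1A_{h'}V_1^{-1},\qquad f_0 = V_0\phi_h,\qquad f = V_1\phi_{h'}.
\]
Putting $W := V_0^{-1}V_1\in GL(\calH_{h'},\calH_h)$, the two representations of $T$ give $A_h = WA_{h'}W^{-1}$, i.e.\ the model operators $A_h$ and $A_{h'}$ are similar. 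The crux is then to conclude that $h = h'$. For $h,h'\neq 0$ this is exactly the reasoning behind the uniqueness assertion in the proof of Theorem \ref{t:charac_N}: by Lemma \ref{l:C0} the operators $A_h$ and $A_{h'}$ are $C_0$-contractions with minimal functions $h$ and $h'$ respectively, and by Lemma \ref{l:minimals} similar $C_0$-contractions have the same minimal function. The case $h=0$ (or, symmetrically, $h'=0$) I would rule out separately: $A_0 = M_\T^+$ is an isometry, whereas the computation in the proof of Corollary \ref{c:strst} shows that $A_{h'}^nx\to 0$ for every $x$ when $h'\in\frakI^*$; since strong convergence of the powers to $0$ is preserved under similarity, $A_0$ cannot be similar to any $A_{h'}$ with $h'\neq 0$. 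Hence $h = h'$, so that $\calH_{h'} = \calH_h$, $\phi_{h'} = \phi_h$, and $W\in\{A_h\}'\cap GL(\calH_h)$.

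Finally I would set $V := V_0WV_0^{-1}\in GL(\calH)$ and verify the two required properties by direct computation. Using that $W$ commutes with $A_h$,
\[
VTV^{-1} = V_0W(V_0^{-1}TV_0)W^{-1}V_0^{-1} = V_0WA_hW^{-1}V_0^{-1} = V_0A_hV_0^{-1} = T,
\]
so $V\in\{T\}'\cap GL(\calH)$; and
\[
Vf_0 = V_0WV_0^{-1}(V_0\phi_h) = V_0W\phi_h = V_0W\phi_{h'} = V_1\phi_{h'} = f,
\]
which exhibits $f$ as $Vf_0$ with $V\in\{T\}'\cap GL(\calH)$ and completes the reverse inclusion. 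The one genuinely non-routine ingredient is the identity $h=h'$ (equivalently, that the inner function attached to $T$ via Theorem \ref{t:charac_N} does not depend on the generating vector); everything else is bookkeeping with the similarity relation $\cong$ and the stability of frames under $GL(\calH)$.
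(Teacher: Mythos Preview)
Your argument is correct and follows essentially the same route as the paper: apply Theorem \ref{t:charac_N} to both $(T,f_0)$ and $(T,f)$, use Lemmas \ref{l:C0} and \ref{l:minimals} to identify the two inner functions, and then read off the required intertwiner (your $V=V_1V_0^{-1}$ is exactly the paper's $WV^{-1}$). The only notable difference is that you treat the boundary case $h=0$ separately, whereas the paper's one-line appeal to Lemmas \ref{l:C0} and \ref{l:minimals} tacitly assumes both functions are nonzero; your extra sentence closes that small gap.
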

\begin{proof}
%If $T$ is basis-generating, Corollary \ref{c:charac_riesz_N} implies that there exist operators $V,W\in GL(L^2_+,\calH)$ such that
%$$
%T = VM_\T^+V^{-1} = WM_\T^+W^{-1}\qquad\text{and}\qquad f_0 = V1_\T,\;f = W1_\T.
%$$
%Hence, $WV^{-1}T = TWV^{-1}$ and $f = WV^{-1}f_0$. If $(T^nf_0)_{n\in\N}$ is overcomplete, by Theorem \ref{t:charac_N} there exist $g,h\in\frakI$ and $V\in GL(\calH_h,\calH)$, $W\in GL(\calH_g,\calH)$ such that
%
Clearly, if $V\in \{T\}'\cap GL(\calH)$, then $(T^nVf_0)_{n\in\N} = (VT^nf_0)_{n\in\N}$ is a frame for $\calH$. Let $f\in\calV_\N(T)$. By Theorem \ref{t:charac_N} there exist $g,h\in\frakI^*\cup\{0\}$ and $V\in GL(\calH_h,\calH)$, $W\in GL(\calH_g,\calH)$ such that
$$
T = VA_hV^{-1} = WA_gW^{-1}\qquad\text{and}\qquad f_0 = V\phi_h,\;f = W\phi_g.
$$
Hence, by Lemma \ref{l:C0} and Lemma \ref{l:minimals} we have $g=h$ (up to unimodular constant multiples). In particular, $\calH_h = \calH_g$, $A_h = A_g$, and $\phi_h = \phi_g$. We conclude that $f = W\phi_g = W\phi_h = WV^{-1}f_0$ and $WV^{-1}T = WA_hV^{-1} = WA_gV^{-1} = TWV^{-1}$.
\end{proof}

\begin{rem}\label{r:explicit}
A more explicit  characterization
of $\calV_\N(T)$ than given in \eqref{e:VTN} can be obtained as follows.
Let $M_\vphi$ denote the operator of multiplication with $\vphi$ in $L^2$.
If $h\in\frakI^*$, then $\{A_h\}' = \{P_{\calH_h}M_\vphi|_{\calH_h} : \vphi\in H^\infty\}$ by Sarason's Theorem (see, e.g., \cite[Lec.\ VIII.1]{n}). By the spectral mapping theorem (cf.\ \cite[Lec.\ III.3]{n}), the operator $P_{\calH_h}M_\vphi|_{\calH_h} = \vphi(A_h)$ is in $GL(\calH_h)$ if and only if the function $\vphi$ satisfies $\inf\{|h(z)|+|\vphi(z)| : z\in\D\} > 0$. Let us denote the class of functions $\vphi\in H^\infty$ satisfying this requirement by $\Theta_h$.

Now, let $(T^nf_0)_{n\in\N}$ form an overcomplete frame for $\calH$ and let $(T,f_0)$ be similar to $(A_h,\phi_h)$ via $W\in GL(\calH_h,\calH)$. Then $V\in\{T\}'\cap GL(\calH)$ is equivalent to $W^{-1}VW\in\{A_h\}'\cap GL(\calH_h)$. Hence, $V = W\vphi(A_h)W^{-1}$ for some $\vphi\in\Theta_h$. Therefore, $Vf_0 = W\vphi(A_h)\phi_h = WP_{\calH_h}(\vphi\phi_h)$, that is,
$$
\calV_\N(T) = \left\{WP_{\calH_h}(\vphi\phi_h) : \vphi\in\Theta_h\right\}.
$$
But due to \eqref{e:projection} we have $P_{\calH_h}(\vphi\phi_h) = hP_-(\vphi\ol h\phi_h) = hP_-(\vphi P_-\ol h) =hP_-(\vphi\ol h) = P_{\calH_h}\vphi$, and we obtain
$$
\calV_\N(T) = \left\{WP_{\calH_h}\vphi : \vphi\in\Theta_h\right\}.
$$
\end{rem}

\section{\texorpdfstring{Orbits of the form $(T^nf_0)_{n\in \Z}$}{Orbits indexed by the integers}}\label{71219c}
In this section we focus on bi-infinite orbits indexed by $\Z.$ The following lemma is proved similarly as Lemma \ref{l:similar-1}.

\begin{lem}\label{l:similar-2}
Let $\calH_1$ and $\calH_2$ be Hilbert spaces and $(T_j,f_j)\in GL(\calH_j)\times\calH_j$, $j=1,2$, such that $(T_1,f_1)\cong (T_2,f_2)$. Then $(T_1^nf_1)_{n\in\Z}$ is a frame for $\calH_1$ if and only if $(T_2^nf_2)_{n\in\Z}$ is a frame for $\calH_2$. In this case, the operator $V$ in \eqref{e:similar} is unique.
\end{lem}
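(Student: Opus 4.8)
The plan is to mirror the proof of Lemma~\ref{l:similar-1} almost verbatim, adapting each summation from the index set $\N$ to $\Z$. The two ingredients that make the argument for $\N$ work are: (a) a change-of-variable identity on the frame expansion that transforms the quantity $\sum_n|\<g,T_2^nf_2\>|^2$ into $\sum_n|\<V^*g,T_1^nf_1\>|^2$ using only the algebraic relations $T_2=VT_1V^{-1}$, $f_2=Vf_1$, and (b) the fact that applying $V$ to a convergent series $\sum c_nT_1^nf_1$ commutes with the bounded operator $V$. Neither of these uses anything special about $\N$; the only place where $I=\N$ versus $I=\Z$ could matter is whether the operators $T_j^n$ are defined for negative $n$, which is precisely why the hypothesis is strengthened from $T_j\in L(\calH_j)$ to $T_j\in GL(\calH_j)$. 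So the first step is to record that under $(T_1,f_1)\cong(T_2,f_2)$ via $V$ one also has $T_2^{-1}=VT_1^{-1}V^{-1}$, hence $T_2^n=VT_1^nV^{-1}$ for \emph{all} $n\in\Z$, not merely $n\ge 0$.

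With that observation in hand, for any $g\in\calH_2$ I would write
\[
\sum_{n\in\Z}|\<g,T_2^nf_2\>|^2
=\sum_{n\in\Z}|\<g,VT_1^nV^{-1}Vf_1\>|^2
=\sum_{n\in\Z}|\<V^*g,T_1^nf_1\>|^2,
\]
and note that since $V$ and $V^*$ are both bounded and bijective, the existence of frame bounds for one of the sequences (uniformly over all vectors) is equivalent to the existence of frame bounds for the other; completeness transfers because $V$ is surjective. This gives the claimed equivalence. For the uniqueness of $V$, suppose $(T_1^nf_1)_{n\in\Z}$ is a frame for $\calH_1$. Then every $f\in\calH_1$ has an expansion $f=\sum_{n\in\Z}c_nT_1^nf_1$ with $c\in\ell^2(\Z)$, and applying the bounded operator $V$ termwise gives
\[
Vf=\sum_{n\in\Z}c_nVT_1^nf_1=\sum_{n\in\Z}c_nT_2^nVf_1=\sum_{n\in\Z}c_nT_2^nf_2,
\]
so $Vf$ is determined by the coefficient sequence $c$ and the sequence $(T_2^nf_2)_{n\in\Z}$ alone; hence $V$ is uniquely determined by the pair $(T_2,f_2)$ and the similarity relations, completing the proof.

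The only point that requires a sentence of care — and the closest thing to an obstacle — is the justification of the termwise application of $V$ to the (in general unconditionally convergent, not absolutely convergent) frame series indexed by $\Z$: since $V$ is bounded it is continuous, and for a series converging in the norm topology of $\calH_1$ continuity of $V$ immediately yields $V(\sum_{n}c_nT_1^nf_1)=\sum_n c_nVT_1^nf_1$ with convergence in $\calH_2$; no summability over $\Z$ beyond $\ell^2$ is needed. I expect this to be routine enough that the paper simply says ``is proved similarly as Lemma~\ref{l:similar-1}'', and indeed the cleanest writeup is a one-line reduction noting that the passage from $\N$ to $\Z$ changes nothing in the argument once $T_j\in GL(\calH_j)$ guarantees $T_j^n=VT_1^nV^{-1}$ for negative $n$ as well.
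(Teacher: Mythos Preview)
Your proposal is correct and matches the paper exactly: the paper gives no explicit proof for Lemma~\ref{l:similar-2} but simply states that it ``is proved similarly as Lemma~\ref{l:similar-1}'', which is precisely what you have spelled out (including the only new ingredient, that $T_j\in GL(\calH_j)$ makes $T_2^n=VT_1^nV^{-1}$ valid for negative $n$). Your anticipation of this in the final paragraph is spot on.
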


We will now characterize the pairs $(T,f_0)\in GL(\calH)\times\calH$ which yield a frame for $\calH$ of the form $(T^nf_0)_{n\in\Z}$. As it will turn out below, up to similarity such frames are exactly the ones appearing in the following example.

\begin{ex}\label{ex:prototype}
It is clear that $(M_\T^n1_\T)_{n\in\Z} = (z^n)_{n\in\Z}$ is an orthonormal basis for $L^2(\T)$. Let $\sigma\in\frakB(\T)$, $|\sigma| > 0$. Then $(M_\sigma^n1_\sigma)_{n\in\Z}$ is a Parseval frame for $L^2(\sigma)$, since it is the orthogonal projection of the Riesz basis $ (M_\T^n1_\T)_{n\in\Z}$ for $L^2(\T)$ onto the closed subspace $L^2(\sigma)$ of $L^2(\T)$.
\end{ex}

Let us again start with the Riesz basis case. The next proposition shows that if $(T,f_0)$ generates a Riesz basis indexed by $\Z$, then $T$ is similar to the multiplication operator $M_\T$ on $L^2(\T)$.

\begin{prop}\label{p:charac_riesz}
Let $T\in GL(\calH)$ and $f_0\in\calH$. Then the following statements are equivalent:
\begin{enumerate}
\item[{\rm (i)}]  The system $(T^nf_0)_{n\in\Z}$ is a Riesz basis for $\calH$.
\item[{\rm (ii)}] $(T,f_0)\cong (M_{\T},1_\T)$.
\end{enumerate}
\end{prop}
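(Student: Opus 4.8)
The plan is to mirror the structure of the proof of Theorem~\ref{t:charac_N}, but now working with the full two-sided multiplication operator $M_\T$ on $L^2(\T)$ rather than its one-sided compression. The implication (ii)$\Rightarrow$(i) is immediate: since $(M_\T^n1_\T)_{n\in\Z} = (z^n)_{n\in\Z}$ is an orthonormal basis for $L^2(\T)$, any pair similar to $(M_\T,1_\T)$ via some $V\in GL(\calH_1,\calH_2)$ produces the sequence $(V M_\T^n V^{-1}Vf_1)_{n\in\Z} = (V z^n)_{n\in\Z}$, which is a Riesz basis (the image of an orthonormal basis under an invertible bounded operator); compare Lemma~\ref{l:similar-2}.

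For (i)$\Rightarrow$(ii), I would introduce the $L^2(\T)$-synthesis operator $U : L^2(\T)\to\calH$ of the sequence $(T^nf_0)_{n\in\Z}$, defined by $U\vphi = \sum_{n\in\Z}\langle\vphi,z^n\rangle T^nf_0$. Since $(T^nf_0)_{n\in\Z}$ is a Riesz basis, $U$ is a bounded bijection with bounded inverse. The key computation, exactly as in Theorem~\ref{t:charac_N}, is that $TU\vphi = \sum_{n\in\Z}\langle\vphi,z^n\rangle T^{n+1}f_0 = \sum_{n\in\Z}\langle z\vphi,z^n\rangle T^n f_0 = U(z\vphi) = UM_\T\vphi$ for all $\vphi\in L^2(\T)$, using that multiplication by $z$ is unitary on $L^2(\T)$ and shifts the Fourier basis; here no boundary term appears because the index set is all of $\Z$. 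Hence $TU = UM_\T$, so $T = UM_\T U^{-1}$. Finally $Uf_0$ is obtained from $U1_\T = \sum_{n\in\Z}\langle 1_\T,z^n\rangle T^n f_0 = T^0 f_0 = f_0$, i.e.\ $U$ maps $1_\T$ to $f_0$. Therefore $(T,f_0)\cong(M_\T,1_\T)$ via $V=U$, which is (ii).

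I do not expect a serious obstacle in this proposition; the only points requiring mild care are verifying that $U$ is indeed boundedly invertible (this is precisely what the Riesz-basis hypothesis gives, via the standard correspondence between Riesz bases and bounded bijections from $\ell^2$ — here transported to $L^2(\T)$ through the orthonormal basis $(z^n)_{n\in\Z}$) and checking that the rearrangement of the series in the identity $TU=UM_\T$ is legitimate, which follows from unconditional convergence of the Riesz-basis expansion together with boundedness of $T$. One could alternatively phrase the argument abstractly: a Riesz basis indexed by $\Z$ on which $T$ acts as the right shift forces $T$ to be similar to the bilateral shift, and the bilateral shift is unitarily equivalent to $M_\T$ on $L^2(\T)$ with cyclic vector $1_\T$; but the direct computation above is cleaner and self-contained.
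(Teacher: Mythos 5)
Your proposal is correct. Both implications are sound: the computation $TU\vphi=U(z\vphi)$ is legitimate because the Riesz-basis expansion converges unconditionally and $T$ is bounded, the synthesis operator $U:L^2(\T)\to\calH$ is boundedly invertible precisely by the Riesz-basis hypothesis, and $U1_\T=f_0$ follows from $\<1_\T,z^n\>=\delta_{n,0}$. (Strictly, with the paper's convention \eqref{e:similar} the similarity is realized via $U^{-1}$ rather than $U$, but the relation is symmetric, so this is immaterial.)

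Your route differs in presentation from the paper's own proof of Proposition \ref{p:charac_riesz}, which first conjugates by an operator $V\in GL(\calH)$ turning the Riesz basis into an orthonormal basis $(e_n)_{n\in\Z}$ with $Se_n=e_{n+1}$, and then identifies $(S,e_0)$ with $(M_\T,1_\T)$ through the unitary sending $e_n\mapsto z^n$; the intertwiner is thus factored as (Riesz basis $\to$ ONB) followed by (ONB $\to$ $(z^n)_{n\in\Z}$), and no series manipulation is needed. Your argument instead builds the intertwiner in one stroke as the $L^2(\T)$-synthesis operator and verifies $TU=UM_\T$ directly -- this is exactly the mechanism the paper deploys for the general frame case in Theorem \ref{t:charac}, specialized to the Riesz-basis situation where $\ker U=\{0\}$ makes the Beurling--Helson step superfluous. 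What your version buys is uniformity: the Riesz-basis case appears as the injective instance of the frame argument. What the paper's version buys is a completely elementary, self-contained proof of the basis case that can be stated before any synthesis-operator machinery and then cited to settle the $\sigma=\T$ endpoint of Theorem \ref{t:charac}.
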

\begin{proof}
%Again, (ii)$\Sra$(i) is evident. For the converse, assume that $(T^nf_0)_{n\in\Z}$ is a Riesz basis for $\calH$. By Theorem \ref{t:charac} this implies $(T,f_0)\cong (M_\sigma,1_\sigma)$ for some $\sigma\in\frakB(\T)$. Hence, $(M_\sigma^n1_\sigma)_{n\in\Z}$ is a Riesz basis of $L^2(\sigma)$. We have to prove that $\sigma = \T$ essentially, i.e., $|\T\setminus\sigma| = 0$. Assume that this is not true. Then $L^2(\sigma)$ is a proper(!) subspace of $L^2(\T)$ and we can find some $c = (c_n)_{n\in\Z}\in(\calF L^2(\sigma))^\perp$, $c\neq 0$, where $\calF : L^2(\T)\to\ell^2(\Z)$ denotes the Fourier transform. We conclude that for each $f\in L^2(\sigma)$,
%\begin{align*}
%\left\<\sum_{n\in\Z}c_nM_\sigma^n1_\sigma,f\right\>_{L^2(\sigma)}
%&= \sum_{n\in\Z}c_n\int_\sigma z^n\ol{f(z)}\,d|z| = \sum_{n\in\Z}c_n\ol{\int_\sigma z^{-n}f(z)\,d|z|}\\
%&= \sum_{n\in\Z}c_n\ol{(\calF f)(n)} = \<c,\calF f\>_{\ell^2(\Z)} = 0.
%\end{align*}
%Hence, $\sum_{n\in\Z}c_nM_\sigma^n1_\sigma = 0$. But as $(M_\sigma^n1_\sigma)_{n\in\Z}$ is a Riesz basis for $L^2(\sigma)$ it follows that $c = 0$, a contradiction.
%
%{\it Alternative Proof:}
The implication (ii)$\Sra$(i) is evident. For the converse direction assume that $(T^nf_0)_{n\in\Z}$ is a Riesz basis for $\calH$. Let $V\in GL(\calH)$ be such that $(VT^nf_0)_{n\in\Z}$ is an orthonormal basis (ONB) of $\calH$. Thus, $(S^ne_0)_{n\in\Z}$ is an ONB of $\calH$, where $S := VTV^{-1}$ and $e_0 := Vf_0$. Moreover, $(T,f_0)\cong (S,e_0)$. Put $e_n := S^ne_0$, $n\in\Z$. Then $(e_n)_{n\in\Z}$ is an ONB of $\calH$ and $Se_n = e_{n+1}$ for $n\in\Z$. Now, define the unitary map $U : \calH\to L^2(\T)$ by $Ue_n := z^n$, $n\in\Z$. It is easily seen that $USU^{-1} = M_\T$ and $Ue_0 = 1_\T$. Therefore,
$$
(T,f_0)\,\cong\,(S,e_0)\,\cong\,(USU^{-1},Ue_0) = (M_\T,1_\T),
$$
which completes the proof.
\end{proof}

\begin{lem}\label{l:necessary}
Let $T\in GL(\calH)$ and $f_0\in\calH$ such that $(T^nf_0)_{n\in\Z}$ is a frame for $\calH$ with frame operator $S$. Then $S^{-1/2}TS^{1/2}$ is unitary. In particular, $T$ is similar to a unitary operator.
\end{lem}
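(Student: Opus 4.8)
The plan is to exploit the shift-covariance of the orbit. Write $U\colon\ell^2(\Z)\to\calH$ for the synthesis operator of $F:=(T^nf_0)_{n\in\Z}$, so that $U^*=C$ is the analysis operator and $S=UU^*$. Since $F$ is a frame, $U$ is bounded and surjective and $S$ is bounded, positive and boundedly invertible, so $S^{1/2}$ and $S^{-1/2}$ are well-defined bounded, positive, invertible operators. Let $R$ denote the (bilateral) right shift on $\ell^2(\Z)$, which is unitary. The key identity is
\[
TU = UR.
\]
To verify it, note that for $c=(c_n)_{n\in\Z}\in\ell^2(\Z)$ one has $Uc=\sum_{n\in\Z}c_nT^nf_0$, and since $T$ is bounded it may be pulled inside the sum, giving $TUc=\sum_{n\in\Z}c_nT^{n+1}f_0=\sum_{n\in\Z}c_{n-1}T^nf_0=U(Rc)$; this holds first on finitely supported $c$ and then on all of $\ell^2(\Z)$ by continuity.

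Next I would take adjoints, obtaining $U^*T^*=R^*U^*$, and then combine the two relations using $RR^*=I$:
\[
TST^* = TUU^*T^* = (UR)(R^*U^*) = UU^* = S.
\]
Now set $W:=S^{-1/2}TS^{1/2}$. This is bounded, and it is invertible because $T\in GL(\calH)$. Moreover
\[
WW^* = S^{-1/2}TS^{1/2}\cdot S^{1/2}T^*S^{-1/2} = S^{-1/2}\big(TST^*\big)S^{-1/2} = S^{-1/2}SS^{-1/2} = I.
\]
Multiplying $WW^*=I$ on the left by $W^{-1}$ yields $W^*=W^{-1}$, hence also $W^*W=I$, so $W$ is unitary. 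Finally $T=S^{1/2}WS^{-1/2}$ exhibits $T$ as similar, via $S^{1/2}$, to the unitary operator $W$, which proves the lemma.

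I do not expect a genuine obstacle here. The only points requiring care are the standard frame-theoretic facts that the frame operator of a frame is positive and boundedly invertible (so that $S^{\pm1/2}$ exist as bounded operators), and the justification that the intertwining relation $TU=UR$, checked on finitely supported sequences, extends to all of $\ell^2(\Z)$ by boundedness of $T$ and $U$. Everything else is a short algebraic manipulation; note in particular that no commutation property of $S^{1/2}$ with $T$ is needed, only the identity $TST^*=S$.
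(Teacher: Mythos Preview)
Your proof is correct and follows essentially the same route as the paper: both establish $TST^*=S$ (the paper by a direct computation with the frame-operator sum, you via the intertwining $TU=UR$ and $S=UU^*$, which amounts to the same index shift), and then deduce that $S^{-1/2}TS^{1/2}$ is an invertible isometry, hence unitary.
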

\begin{proof}
We have $Sf = \sum_{n\in\Z}\<f,T^nf_0\>T^nf_0$ for $f\in\calH$ and hence
$$
TST^*f = \sum_{n\in\Z}\<T^*f,T^nf_0\>T^{n+1}f_0 = \sum_{n\in\Z}\<f,T^{n+1}f_0\>T^{n+1}f_0 = Sf.
$$
Therefore, $TST^* = S$, meaning that $UU^* = \Id$, where $U := S^{-1/2}TS^{1/2}$. As $T\in GL(\calH)$, $U$ is a unitary operator.
\end{proof}

\begin{cor}\label{c:notendstozero}
Let $T\in GL(\calH)$ and $f_0\in\calH$ such that $(T^nf_0)_{n\in\Z}$ is a frame for $\calH$ with frame bounds $A$ and $B$. Then for all $f\in\calH$ and all $n\in\Z$ we have that
$$
\|T^nf\|\,\ge\,\sqrt{\frac A B}\,\|f\|\qquad\text{and}\qquad\| (T^*)^{n}f\|\,\ge\,\sqrt{\frac A B}\,\|f\|.
$$
\end{cor}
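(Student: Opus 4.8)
The plan is to read off the result directly from Lemma \ref{l:necessary}. Let $S$ denote the frame operator of $(T^nf_0)_{n\in\Z}$. Lemma \ref{l:necessary} tells us that $U := S^{-1/2}TS^{1/2}$ is unitary, i.e.\ $T = S^{1/2}US^{-1/2}$. Since $U$ is unitary, every power $U^n$ with $n\in\Z$ is unitary as well (recall $U^{-1}=U^*$ is again unitary), so
\[
T^n = S^{1/2}U^nS^{-1/2}\qquad(n\in\Z),
\]
and, taking adjoints and using that $S^{1/2}$ and $S^{-1/2}$ are self-adjoint,
\[
(T^*)^n = (T^n)^* = S^{-1/2}(U^*)^nS^{1/2}\qquad(n\in\Z).
\]

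Next I would translate the frame bounds into operator inequalities: $A\,\Id\le S\le B\,\Id$ gives, via the functional calculus for the positive operator $S$, the inequalities $\sqrt A\,\Id\le S^{1/2}\le\sqrt B\,\Id$ and consequently $\tfrac1{\sqrt B}\,\Id\le S^{-1/2}\le\tfrac1{\sqrt A}\,\Id$. In particular $\|S^{1/2}g\|\ge\sqrt A\,\|g\|$ and $\|S^{-1/2}g\|\ge\tfrac1{\sqrt B}\,\|g\|$ for every $g\in\calH$.

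Now the two estimates are immediate. Fix $f\in\calH$ and $n\in\Z$. For the first one, set $g := U^nS^{-1/2}f$; since $U^n$ is unitary, $\|g\| = \|S^{-1/2}f\|\ge\tfrac1{\sqrt B}\|f\|$, whence
\[
\|T^nf\| = \|S^{1/2}g\|\ge\sqrt A\,\|g\| = \sqrt A\,\|S^{-1/2}f\|\ge\sqrt{\tfrac AB}\,\|f\|.
\]
For the second one, set $g := (U^*)^nS^{1/2}f$; then $\|g\| = \|S^{1/2}f\|\ge\sqrt A\,\|f\|$, so
\[
\|(T^*)^nf\| = \|S^{-1/2}g\|\ge\tfrac1{\sqrt B}\|g\|\ge\sqrt{\tfrac AB}\,\|f\|.
\]
There is essentially no serious obstacle here: the only points demanding a little care are that $U^n$ (and $(U^*)^n$) are unitary for negative $n$ as well, which is clear since $U\in GL(\calH)$ is unitary, and the passage from the operator inequalities for $S$ to those for $S^{1/2}$ and $S^{-1/2}$; everything else is routine bookkeeping with the frame-bound inequalities.
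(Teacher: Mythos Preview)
Your proof is correct and follows essentially the same approach as the paper: both use Lemma \ref{l:necessary} to write $T^n = S^{1/2}U^nS^{-1/2}$ with $U$ unitary and then apply the bounds $\|S^{1/2}g\|\ge\sqrt{A}\,\|g\|$ and $\|S^{-1/2}g\|\ge B^{-1/2}\|g\|$ coming from $A\,\Id\le S\le B\,\Id$. The paper's version is just slightly more compressed and leaves the $(T^*)^n$ case to the reader, whereas you spell out the adjoint identity $(T^*)^n = S^{-1/2}(U^*)^nS^{1/2}$ explicitly.
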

\begin{proof}
Define the operator $U := S^{-1/2}TS^{1/2}$, where $S$ denotes the frame operator of $(T^nf_0)_{n\in\Z}$. As $U$ is unitary, for $f\in\calH$ we have
$$
\left\|T^nf\right\| = \left\|S^{1/2}U^nS^{-1/2}f\right\|\ge\sqrt{A}\,\left\|U^nS^{-1/2}f\right\| = \sqrt{A}\,\left\|S^{-1/2}f\right\|\ge\sqrt{\frac A B}\,\|f\|.
$$
A similar calculation applies to $\|(T^*)^{n}f\|$.
\end{proof}

\begin{rem}
Lemma \ref{l:necessary} and Corollary \ref{c:notendstozero} can easily be generalized to systems of the form $(T^nf_j)_{n\in\Z,\,j\in J}$ with $T\in GL(\calH)$, $f_j\in\calH$, and $J$ being a countable index set.
\end{rem}

Consider two sets $\sigma_1,\sigma_2\in\frakB(\T)$ and their symmetric difference
$$
\sigma_1\triangle\sigma_2 := (\sigma_1\backslash\sigma_2)\cup(\sigma_2\backslash\sigma_1).
$$
Clearly, if $|\sigma_1\triangle\sigma_2|=0$, then $L^2(\sigma_1) = L^2(\sigma_2)$ and also $M_{\sigma_1} = M_{\sigma_2}$. In what follows, we shall identify sets $\sigma_1,\sigma_2\in\frakB(\T)$ whose symmetric difference has arc length measure zero. We also write $\sigma_1\subset\sigma_2$ if $|\sigma_1\backslash\sigma_2|=0$.

\begin{lem}\label{l:sigmas}
Let $\sigma_1,\sigma_2\in\frakB(\T)$. If $M_{\sigma_1}$ and $M_{\sigma_2}$ are similar, then $\sigma_1 = \sigma_2$.
\end{lem}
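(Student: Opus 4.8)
The plan is to use spectral-theoretic invariants that survive similarity. The operator $M_\sigma$ on $L^2(\sigma)$ is unitary, so its spectrum lies on $\T$; but a similarity $V M_{\sigma_1} V^{-1} = M_{\sigma_2}$ only transports the spectrum, not the finer spectral data of a unitary. The key observation is that for multiplication operators the \emph{essential range} of the symbol is a similarity invariant in disguise: $z \in \sigma$ (modulo null sets) if and only if $M_\sigma - zI$ is not bounded below on any infinite-dimensional subspace, equivalently $z$ lies in the spectrum. More precisely, I would show that $\sigma$ equals, up to null sets, the spectrum $\sigma(M_\sigma)$ — this is the standard fact that the spectrum of a multiplication operator is the essential range of the symbol, and here the symbol is the identity on $\sigma$, so $\sigma(M_\sigma) = \overline{\sigma}$ is not quite enough to recover $\sigma$ itself. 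So spectrum alone is too coarse: e.g.\ $\sigma$ and its closure give the same operator only if they differ by a null set, which may fail.

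Instead I would exploit that similar operators have the same point spectrum with the same eigenspace dimensions, and more usefully the same \emph{absolutely continuous multiplicity function} is \emph{not} preserved by similarity, but the following cruder invariant is: for each $z_0\in\T$ and each open arc $I\ni z_0$, the range of the spectral projection is infinite-dimensional or zero according to whether $|\sigma\cap I|>0$. Concretely, here is the cleanest route. Suppose $V\in GL(\calH_1,\calH_2)$ with $\calH_j = L^2(\sigma_j)$ satisfies $V M_{\sigma_1} V^{-1} = M_{\sigma_2}$. For a Borel set $\tau\subset\T$ let $E_j(\tau)$ denote multiplication by $1_{\tau\cap\sigma_j}$ on $L^2(\sigma_j)$, the spectral projection of $M_{\sigma_j}$. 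Since the spectral projections of $M_{\sigma_j}$ are obtained from $M_{\sigma_j}$ by the same Borel functional calculus, and similarity intertwines functional calculi for \emph{bounded Borel functions} of a unitary (because a similarity intertwines polynomials in $U$ and $U^*=U^{-1}$, hence by weak-operator approximation all bounded Borel functions), we get $V E_1(\tau) V^{-1} = E_2(\tau)$ for every Borel $\tau\subset\T$. In particular $E_1(\tau) = 0 \iff E_2(\tau) = 0$, i.e.\ $|\tau\cap\sigma_1| = 0 \iff |\tau\cap\sigma_2|=0$. Taking $\tau = \T\setminus\sigma_1$ gives $|\sigma_2\setminus\sigma_1| = 0$, and symmetrically $|\sigma_1\setminus\sigma_2| = 0$, so $\sigma_1 = \sigma_2$ in the identified sense.

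The main obstacle is justifying that the similarity $V$ intertwines the full Borel functional calculus and not merely polynomials: similarities are not isometric, so one must be careful. The fix is to note that $M_{\sigma_j}$ is unitary, so $M_{\sigma_j}^{-1} = M_{\sigma_j}^* $ and the relation $VM_{\sigma_1}V^{-1} = M_{\sigma_2}$ automatically gives $VM_{\sigma_1}^{-1}V^{-1} = M_{\sigma_2}^{-1}$; hence $V$ intertwines every Laurent polynomial $p(M_{\sigma_1}, M_{\sigma_1}^{-1})$ with $p(M_{\sigma_2}, M_{\sigma_2}^{-1})$. Laurent polynomials are uniformly dense in $C(\T)$, so $V$ intertwines $f(M_{\sigma_1})$ and $f(M_{\sigma_2})$ for all $f\in C(\T)$ by norm convergence. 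Finally, for a bounded Borel function $g$ there is a uniformly bounded sequence $f_k\in C(\T)$ with $f_k\to g$ pointwise; then $f_k(M_{\sigma_j})\to g(M_{\sigma_j})$ in the strong operator topology by dominated convergence in $L^2(\sigma_j)$, and since $A\mapsto VAV^{-1}$ is SOT-continuous on bounded sets (as $V,V^{-1}$ are bounded), the intertwining passes to $g(M_{\sigma_1})$ and $g(M_{\sigma_2})$. Applying this with $g = 1_\tau$ delivers $VE_1(\tau)V^{-1} = E_2(\tau)$ and completes the argument.
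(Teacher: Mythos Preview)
Your argument is correct and takes a genuinely different route from the paper. The paper proceeds by a direct Fourier computation: from $VM_{\sigma_1}V^{-1}=M_{\sigma_2}$ it follows that $V$ maps $z^n|_{\sigma_1}$ to $(z^n|_{\sigma_2})\cdot\varphi$ for every $n\in\Z$, where $\varphi:=V1_{\sigma_1}$; feeding the Fourier expansion of $g:=\chi_{\sigma_1\backslash\sigma_2}\in L^2(\sigma_1)$ through this relation gives $Vg = (g|_{\sigma_2})\,\varphi = 0$, and injectivity of $V$ forces $|\sigma_1\backslash\sigma_2|=0$. This is shorter and entirely self-contained. Your functional-calculus approach is more conceptual---it isolates the actual similarity invariant (the null sets of the scalar spectral measure) and would transfer verbatim to multiplication by the identity on $L^2$ of any $\sigma$-finite measure space. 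One small imprecision worth flagging: not every bounded Borel function on $\T$ is a \emph{pointwise} limit of a uniformly bounded sequence in $C(\T)$ (that characterizes Baire class~$1$). What you actually need, and what holds, is convergence pointwise a.e.\ with respect to arc length (via Lusin's theorem and Borel--Cantelli); this suffices because every spectral measure of $M_{\sigma_j}$ is absolutely continuous with respect to arc length, so the dominated-convergence step and the SOT limit go through for both $j=1,2$ simultaneously.
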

\begin{proof}
By $P_j$ we shall denote the orthogonal projection in $L^2(\T)$ onto $L^2(\sigma_j)$, $j=1,2$. Let $V\in GL(L^2(\sigma_1),L^2(\sigma_2))$ be such that $M_{\sigma_2} = VM_{\sigma_1}V^{-1}$. Then $VM_{\sigma_1}^n1_{\sigma_1} = M_{\sigma_2}^n\vphi$ for each $n\in\Z$, where $\vphi := V1_{\sigma_1}\in L^2(\sigma_2)$. Hence, $Vz_1^n = z_2^n\vphi$, where $z_j := P_jz\in L^2(\sigma_j)$, $j=1,2$. Let $g := \chi_{\sigma_1\setminus\sigma_2}\in L^2(\sigma_1)\subset L^2(\T)$ have the Fourier expansion $g = \sum_{n\in\Z}\alpha_nz^n$ with $(\alpha_n)_{n\in\Z}\in\ell^2(\Z)$. Then we have
$$
Vg = VP_1g = V\sum_{n\in\Z}\alpha_nz_1^n = \sum_{n\in\Z}\alpha_nz_2^n\vphi = \left(\sum_{n\in\Z}\alpha_nz_2^n\right)\vphi = (P_2g)\vphi = 0.
$$
Since $V$ is injective, we conclude that $\chi_{\sigma_1\setminus\sigma_2}=g=0$ and thus $|\sigma_1\backslash\sigma_2|=0$. By interchanging the roles of $\sigma_1$ and $\sigma_2$ we also obtain $|\sigma_2\backslash\sigma_1|=0$.
\end{proof}

The next theorem is the main result in this section.

\begin{thm}\label{t:charac}
Let $T\in GL(\calH)$ and $f_0\in\calH$. Then the following statements are equivalent:
\begin{enumerate}
\item[{\rm (i)}]  The system $(T^nf_0)_{n\in\Z}$ is a frame for $\calH$.
\item[{\rm (ii)}] $(T,f_0)\cong (M_{\sigma},1_\sigma)$ for some $\sigma\in\frakB(\T)$, $|\sigma|>0$.
\end{enumerate}
In case these equivalent conditions are satisfied, the set $\sigma\in\frakB(\T)$ is unique and the frame $(T^nf_0)_{n\in\Z}$ is a Riesz basis for $\calH$ if and only if $\sigma = \T$.
\end{thm}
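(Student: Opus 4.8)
The plan is to mirror the strategy used for Theorem \ref{t:charac_N}, replacing the Beurling–type invariant-subspace argument by the reducing-subspace structure of the bilateral shift. The implication (ii)$\Sra$(i) is immediate: by Example \ref{ex:prototype} the system $(M_\sigma^n1_\sigma)_{n\in\Z}$ is a Parseval frame for $L^2(\sigma)$, and Lemma \ref{l:similar-2} transports the frame property across the similarity. The substance is (i)$\Sra$(ii). First I would form the $L^2(\T)$-synthesis operator $U : L^2(\T)\to\calH$, $U\vphi = \sum_{n\in\Z}\<\vphi,z^n\>T^nf_0$, which is bounded and surjective since $(T^nf_0)_{n\in\Z}$ is a frame. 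As in the proof of Theorem \ref{t:charac_N}, a direct computation using the shift on the Fourier side gives $TU = UM_\T$ and, because $T$ is invertible, also $T^{-1}U = UM_\T^{-1} = UM_\T^*$; hence $\ker U$ is invariant under both $M_\T$ and $M_\T^{-1}$, i.e.\ $\ker U$ is a \emph{reducing} subspace of the bilateral shift $M_\T$ on $L^2(\T)$.

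The key structural input is the classical fact (Wiener's theorem; see Appendix \ref{ap:subspaces}) that the reducing subspaces of $M_\T$ on $L^2(\T)$ are exactly the subspaces $\chi_{\tau}L^2(\T) = L^2(\tau)$ for $\tau\in\frakB(\T)$. Applying this to $\ker U$ yields a Borel set $\tau$ with $\ker U = L^2(\tau)$; put $\sigma := \T\setminus\tau$, so that $\ran U^* = (\ker U)^\perp = L^2(\sigma)$. Since $U$ is surjective, $\calH\neq\{0\}$ forces $|\sigma|>0$. Now set $V := U|_{L^2(\sigma)}\in L(L^2(\sigma),\calH)$; it is boundedly invertible with $V^{-1} = U^*(UU^*)^{-1}$. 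Because $L^2(\sigma)$ reduces $M_\T$, the compression $P_{L^2(\sigma)}M_\T|_{L^2(\sigma)}$ is simply $M_\sigma$, so applying $V^{-1}$ on the right to $TU = UM_\T$ gives
$$
T = UM_\T V^{-1} = U P_{\ran U^*}M_\T V^{-1} = VM_\sigma V^{-1},
$$
and $Vf_0\text{-side: } V1_\sigma = U P_{\ran U^*}1_\T = U1_\T = f_0$. Hence $(T,f_0)\cong(M_\sigma,1_\sigma)$, which is (ii).

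For the supplementary claims: uniqueness of $\sigma$ follows from Lemma \ref{l:sigmas}, since two representing sets $\sigma_1,\sigma_2$ would make $M_{\sigma_1}$ and $M_{\sigma_2}$ similar (both similar to $T$), forcing $\sigma_1=\sigma_2$. Finally, $(T^nf_0)_{n\in\Z}$ is a Riesz basis iff $U$ is injective iff $\ker U = L^2(\T\setminus\sigma) = \{0\}$ iff $\sigma = \T$; note $(M_\T^n1_\T)_{n\in\Z}$ is an orthonormal basis of $L^2(\T)$, so $\sigma=\T$ indeed gives a Riesz basis, and similarity preserves the Riesz-basis property by Lemma \ref{l:similar-2}.

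The main obstacle I anticipate is purely bookkeeping rather than conceptual: one must be careful that the relevant background result is the description of \emph{reducing} (two-sided invariant) subspaces of the \emph{bilateral} shift — which are the full $L^2(\sigma)$'s — as opposed to the Beurling description of the merely invariant subspaces, which are far more complicated and are what appears in the $\N$-indexed case. Everything else (surjectivity of $U$, the intertwining relations, the identification of the compression of $M_\T$ to a reducing subspace with $M_\sigma$) is routine, and the uniqueness and Riesz-basis addenda are short deductions from lemmas already established.
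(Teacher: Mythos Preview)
Your proposal is correct and follows essentially the same route as the paper: construct the $L^2(\T)$-synthesis operator $U$, derive the intertwining $TU=UM_\T$ (and $T^{-1}U=UM_\T^*$), conclude that $\ker U$ is a reducing subspace of $M_\T$ and hence of the form $L^2(\T\setminus\sigma)$, and restrict $U$ to $L^2(\sigma)$ to obtain the similarity. The only cosmetic difference is that the paper cites this structure result as part of the Beurling--Helson theorem (Theorem \ref{t:beurling}(a), the case $M_\T\calL=\calL$) rather than under the name ``Wiener's theorem,'' and handles the Riesz-basis addendum via Proposition \ref{p:charac_riesz} instead of your direct $\ker U=\{0\}$ argument.
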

\begin{proof}
The implication (ii)$\Sra$(i) is clear, since $(M_\sigma^n1_\sigma)_{n\in\Z}$ is a frame for $L^2(\sigma)$, cf.\ Example \ref{ex:prototype}. For the other direction, assume that $(T^nf_0)_{n\in\Z}$ is a frame for $\calH$. Then the operator $U : L^2\to\calH$, defined by
$$
U\vphi := \sum_{n\in\Z}\<\vphi,z^n\>_{L^2}T^nf_0,\qquad\vphi\in L^2,
$$
is well defined, bounded, and surjective. For $\vphi\in L^2$ we have
$$
TU\vphi = \sum_{n\in\Z}\<\vphi,z^n\>T^{n+1}f_0 = \sum_{n\in\Z}\<\vphi,z^{n-1}\>T^nf_0 = \sum_{n\in\Z}\<z\vphi,z^n\>T^nf_0 = U(z\vphi) = UM_\T\vphi
$$
and therefore $TU = UM_\T$ (and, equivalently, $T^{-1}U = UM_\T^*$). In particular, $\ker U$ is both $M_\T$- and $M_\T^*$-invariant. That is, $M_\T\ker U = \ker U$. Theorem \ref{t:beurling} now implies that $\ker U = L^2(\sigma_0)$ with some $\sigma_0\in\frakB(\T)$. Hence, $\ran U^* = L^2\ominus \ker U = L^2(\sigma)$, where $\sigma = \T\backslash\sigma_0$. The operator $V := U|_{L^2(\sigma)}$ thus maps $L^2(\sigma)$ bijectively onto $\calH$. Applying its inverse from the right to the equation $TU = UM_\T$ yields $T = UM_\T V^{-1} = VM_\sigma V^{-1}$. Also, $V1_\sigma = UP_{L^2(\sigma)}1 = U1 = f_0$. Hence, (ii) holds. The remaining claims in Theorem \ref{t:charac} follow from Lemma \ref{l:sigmas} and Proposition \ref{p:charac_riesz}.
\end{proof}

\begin{rem}\label{r:Z}
If $\sigma\in\frakB(\T)$, the spectrum $\sigma(M_\sigma)$ of $M_\sigma$ does not necessarily coincide with $\sigma$ in the usual sense as $\sigma$ might be non-closed. In fact, it can be easily proved that $\sigma(M_\sigma)$ is the essential closure $\ol\sigma^e$ of $\sigma$ (cf.\ \cite{gmz}). Due to \cite[Lemma 2.11]{gmz} we always have $|\sigma\backslash\ol\sigma^e|=0$, but $|\ol\sigma^e\backslash\sigma|$ might be positive (see, e.g., \cite[Example 2.12]{gmz}) so that $|\sigma\triangle\ol\sigma^e| > 0$. Hence, following our convention, we always have $\sigma\subset\sigma(M_\sigma)$ but not necessarily $\sigma = \sigma(M_\sigma)$. The latter holds, e.g., if $\sigma$ is closed.
\end{rem}

\begin{ex}
Let $T_1$ denote the operator of translation in $L^2(\R)$ by $1$, i.e., $(T_1f)(x) = f(x+1)$, $f\in L^2(\R)$. It is well known that $(T_1^nf_0)_{n\in\Z}$ is a frame for its closed linear span $\calH\subset L^2(\R)$ if and only if the periodic function $\Phi(\omega) := \sum_{n\in\Z}|\wh f_0(\omega+n)|^2$ is bounded above and below by positive constants on $[0,1]\backslash N$, where $N = \{\omega : \Phi(\omega) = 0\}$. It is our aim to calculate the set $\sigma$ in Theorem \ref{t:charac} in this case.

So, let $(T_1^nf_0)_{n\in\Z}$ be a frame for $\calH$. By Theorem \ref{t:charac} there exist $\sigma\in\frakB(\T)$ and $V\in GL(L^2(\sigma),\calH)$ such that $T_1^nf_0 = VM_\sigma^n1_\sigma$, $n\in\N$. Henceforth, we shall transform everything from the unit circle to $[0,1]$ so that, e.g., $\sigma\subset [0,1]$ and $z^n$ becomes $e^{2\pi in\cdot}$. Applying the Fourier transform $\calF$ to $T_1^nf_0 = VM_\sigma^n1_\sigma$ and denoting $W := \calF V$, we obtain $e^{2\pi in\cdot}\wh f_0 = W(e^{2\pi in\cdot}1_\sigma)$, $n\in\N$. Hence, for $h\in L^2(0,1)$, $h=\sum_{n\in\Z}c_ne^{2\pi in\cdot}$, we have $\wt h\wh f_0 = W(h1_\sigma)$, where $\wt h$ denotes the periodic extension of $h$ to $\R$. Thus,
$$
\|h1_\sigma\|_{L^2(\sigma)}^2 \sim \|W(h1_\sigma)\|_2^2 = \int_\R|\wt h\wh f_0|^2\,d\omega = \int_0^1|h(\omega)|^2\Phi(\omega)\,d\omega.
$$
This shows that, indeed, $\Phi=0$ on $[0,1]\backslash\sigma$ and that $\Phi$ is bounded above and below a.e.\ on $\sigma$, that is, $\sigma = [0,1]\backslash N$.
\end{ex}

Similarly as in Section \ref{71219b}, for $T\in GL(\calH)$ we define the set
$$
\calV_\Z(T) := \left\{f\in\calH : (T^nf)_{n\in\Z}\text{ is a frame for }\calH\right\}.
$$
The next two statements are analogues of Proposition \ref{p:VT} and Remark \ref{r:explicit} for the case of orbits indexed by $\Z$.

\begin{prop}\label{p:VTZ}
Let $T\in GL(\calH)$ and let $f_0\in\calV_\Z(T)$. Then
$$
\calV_\Z(T) = \left\{Vf_0 : V\in\{T\}'\cap GL(\calH)\right\}.
$$
\end{prop}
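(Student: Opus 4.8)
The plan is to mimic the proof of Proposition \ref{p:VT}, with the uniqueness of the minimal function replaced by the uniqueness of the set $\sigma$ provided by Lemma \ref{l:sigmas}.

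\textbf{Easy inclusion.} If $V\in\{T\}'\cap GL(\calH)$, then $T^nVf_0 = VT^nf_0$ for every $n\in\Z$, so $(T^nVf_0)_{n\in\Z}$ is the image of the frame $(T^nf_0)_{n\in\Z}$ under the bounded, boundedly invertible operator $V$, hence again a frame for $\calH$. Thus $Vf_0\in\calV_\Z(T)$, which proves $\{Vf_0 : V\in\{T\}'\cap GL(\calH)\}\subseteq\calV_\Z(T)$.

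\textbf{Reverse inclusion.} Let $f\in\calV_\Z(T)$. Applying Theorem \ref{t:charac} to the pairs $(T,f_0)$ and $(T,f)$ separately, one obtains sets $\sigma,\tau\in\frakB(\T)$ with $|\sigma|,|\tau|>0$ and operators $V\in GL(L^2(\sigma),\calH)$, $W\in GL(L^2(\tau),\calH)$ such that $T = VM_\sigma V^{-1} = WM_\tau W^{-1}$, $f_0 = V1_\sigma$ and $f = W1_\tau$. Then $M_\sigma = V^{-1}TV$ and $M_\tau = W^{-1}TW$ are similar, so Lemma \ref{l:sigmas} forces $\sigma = \tau$; in particular $L^2(\sigma) = L^2(\tau)$, $M_\sigma = M_\tau$, and $1_\sigma = 1_\tau$. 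Set $Z := WV^{-1}$. Since $L^2(\sigma) = L^2(\tau)$, we have $Z\in GL(\calH)$, and $Zf_0 = WV^{-1}V1_\sigma = W1_\sigma = W1_\tau = f$, while $ZT = WV^{-1}\,VM_\sigma V^{-1} = WM_\tau V^{-1} = TWV^{-1} = TZ$, so $Z\in\{T\}'\cap GL(\calH)$. Hence $f = Zf_0$ lies in the right-hand side, and the two sets coincide.

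The proof contains essentially no obstacle: the single substantive step is the identification $\sigma = \tau$, which is exactly where Lemma \ref{l:sigmas} (uniqueness of the $L^2(\sigma)$-model of $T$) is indispensable — without it the two model spaces need not agree and the intertwiner $WV^{-1}$ could not be formed directly — while everything else is a routine verification, just as in Proposition \ref{p:VT}.
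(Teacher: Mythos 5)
Your proof is correct and follows essentially the same route as the paper: the easy inclusion via $T^nVf_0=VT^nf_0$, then applying Theorem \ref{t:charac} to both $(T,f_0)$ and $(T,f)$, invoking Lemma \ref{l:sigmas} to identify the two sets $\sigma$ and $\tau$, and taking $WV^{-1}$ as the commuting invertible intertwiner. No gaps.
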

\begin{proof}
If $V\in \{T\}'\cap GL(\calH)$, then $(T^nVf_0)_{n\in\N} = (VT^nf_0)_{n\in\N}$ is a frame for $\calH$. Let $f\in\calV_\Z(T)$. By Theorem \ref{t:charac} there exist $\sigma_1,\sigma_2\in\frakB(\T)$, $|\sigma_1|,|\sigma_2|>0$, as well as $V\in GL(L^2(\sigma_1),\calH)$, $W\in GL(L^2(\sigma_2),\calH)$ such that
$$
T = VM_{\sigma_1}V^{-1} = WM_{\sigma_2}W^{-1}\qquad\text{and}\qquad f_0 = V1_{\sigma_1},\;f = W1_{\sigma_2}.
$$
Hence, by Lemma \ref{l:sigmas} we have $\sigma_1=\sigma_2$ (in the sense of Remark \ref{r:Z}(b)). Therefore, $f = W1_{\sigma_2} = W1_{\sigma_1} = WV^{-1}f_0$ and $WV^{-1}T = WM_{\sigma_1}V^{-1} = WM_{\sigma_2}V^{-1} = TWV^{-1}$.
\end{proof}

\begin{rem}
Let $\sigma\in\frakB(\T)$, $|\sigma|>0$. It is easy to see that $\{M_\sigma\}'$ consists of the operators of multiplication with functions $\psi\in L^\infty(\sigma)$. Such an operator is in $GL(L^2(\sigma))$ if and only if $\operatorname{essinf}|\psi| > 0$. Therefore, if $T\in GL(\calH)$ and $f_0\in\calH$ such that $(T^nf_0)_{n\in\Z}$ is a frame for $\calH$ and $(T,f_0)$ is similar to $(M_\sigma,1_\sigma)$ via $W\in GL(L^2(\sigma),\calH)$, it follows that
$$
\calV_\Z(T) = \left\{W\psi : \psi\in L^\infty(\sigma),\,\operatorname{essinf}|\psi| > 0\right\}.
$$
\end{rem}

\section{Construction of frame orbits} \label{80104a}
For two non-negative sequences $(a_j)_{j\in\N}$ and $(b_j)_{j\in\N}$ we write $a_j\sim b_j$ if there exist constants $c,C > 0$ such that $c a_j\le b_j\le C a_j$ for all $j\in\N$. Recall that a sequence $\Lambda = (\la_j)_{j\in\N}$ in the open unit disk $\D$ is called {\em uniformly separated} if
\begin{eqnarray} \label{81701a}
\delta_\Lambda := \inf_{j\in\N}\prod_{k\neq j}\left|\frac{\la_j-\la_k}{1 - \ol{\la_j}\la_k}\right| > 0.
\end{eqnarray} The condition \eqref{81701a} is known in the literature under the name
{\it the Carleson condition.}
The frame characterization in the next theorem was already proved in \cite{acmt,ap} (see also \cite{cmpp} for a more general version). In addition, we here also provide estimates for the frame bounds.

\begin{thm}\label{t:acmt}
Let $T\in L(\calH)$ be a normal operator and $f_0\in\calH$. Then $(T^nf_0)_{n\in\N}$ is a frame for $\calH$ if and only if $T = \sum_{j=0}^\infty\la_j\<\,\cdot\,e_j\>e_j$, where $(\la_j)_{j\in\N}\subset\D$ is uniformly separated, $(e_j)_{j\in\N}$ is an orthonormal basis for $\calH$, and $|\<f_0,e_j\>|^2\sim 1-|\la_j|^2$. In this case, the frame $(T^nf_0)_{n\in\N}$ has the frame bounds $\alpha\Delta^{-1}$ and $\beta\Delta$, where
\begin{equation}\label{e:alphaDelta}
\alpha := \inf_{j\in\N}\frac{|\<f_0,e_j\>|^2}{1-|\la_j|^2},
\qquad
\beta := \sup_{j\in\N}\frac{|\<f_0,e_j\>|^2}{1-|\la_j|^2},
\qquad\text{and}\qquad
\Delta := \frac 2{\delta_\Lambda^4}\left(1-2\log\delta_\Lambda\right).
\end{equation}
\end{thm}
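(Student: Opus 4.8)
The plan is to reduce the computation to the classical theory of interpolating sequences in the Hardy space. Since the equivalence of (i) and the structural description of the pair $(T,f_0)$ is quoted from \cite{acmt,ap}, only the formula for the frame bounds has to be proved; so assume $T=\sum_j\lambda_j\<\,\cdot\,,e_j\>e_j$ with $(e_j)_{j\in\N}$ an orthonormal basis, $\Lambda=(\lambda_j)_{j\in\N}\subset\D$ uniformly separated with constant $\delta:=\delta_\Lambda>0$, and $\gamma_j:=|\<f_0,e_j\>|^2/(1-|\lambda_j|^2)\in[\alpha,\beta]$ with $0<\alpha\le\beta<\infty$. For $f\in\calH$ put $a_j:=\<f,e_j\>\ol{\<f_0,e_j\>}$. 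Since $T^nf_0=\sum_j\lambda_j^n\<f_0,e_j\>e_j$, expanding the reproducing kernel $k_\lambda(z)=(1-\ol{\lambda}z)^{-1}=\sum_{n\ge0}\ol{\lambda}^n z^n$ of $H^2:=L^2_+$ and applying Parseval's identity gives
$$
\sum_{n=0}^\infty\big|\<f,T^nf_0\>\big|^2=\Big\|\sum_j a_j k_{\lambda_j}\Big\|_{H^2}^2=\Big\|\sum_j\beta_j\,\wh k_{\lambda_j}\Big\|_{H^2}^2,\qquad \wh k_\lambda:=(1-|\lambda|^2)^{1/2}k_\lambda,\quad \beta_j:=\frac{a_j}{(1-|\lambda_j|^2)^{1/2}}.
$$
On the other hand $\|f\|^2=\sum_j|\<f,e_j\>|^2=\sum_j|\beta_j|^2/\gamma_j$, so $\alpha\|f\|^2\le\|\beta\|_{\ell^2}^2\le\beta\|f\|^2$, and since $\gamma_j$ is bounded above and below the vector $(\beta_j)_j$ runs through all of $\ell^2$ as $f$ runs through $\calH$. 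Hence, if the normalized reproducing kernels $(\wh k_{\lambda_j})_{j\in\N}$ form a Riesz sequence in $H^2$ with optimal bounds $c\le C$, then $c\alpha$ and $C\beta$ are frame bounds for $(T^nf_0)_{n\in\N}$; consequently it suffices to prove
$$
\Delta^{-1}\|\beta\|_{\ell^2}^2\ \le\ \Big\|\sum_j\beta_j\,\wh k_{\lambda_j}\Big\|_{H^2}^2\ \le\ \Delta\|\beta\|_{\ell^2}^2\qquad(\beta\in\ell^2),\qquad \Delta=\frac{2}{\delta^4}\big(1-2\log\delta\big),
$$
for this forces $c\ge\Delta^{-1}$ and $C\le\Delta$, so that the weaker but explicit values $\alpha\Delta^{-1}\le c\alpha$ and $\beta\Delta\ge C\beta$ are frame bounds as well.

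The displayed estimate is a quantitative version of Carleson's interpolation theorem (Shapiro--Shields). Let $G=\big(\<\wh k_{\lambda_k},\wh k_{\lambda_j}\>\big)_{j,k}$ be the Gram matrix and let $R\colon H^2\to\ell^2$, $Rf:=\big((1-|\lambda_j|^2)^{1/2}f(\lambda_j)\big)_j$, be the restriction operator, so that $G=RR^*$ and the required estimate reads $\Delta^{-1}I\le G\le\Delta I$. The upper bound is $C=\|G\|=\|R\|^2$, the Carleson embedding constant of the measure $\mu_\Lambda:=\sum_j(1-|\lambda_j|^2)\,\delta_{\lambda_j}$, which I would bound using the separation alone: with $\rho_{jk}:=\big|\tfrac{\lambda_j-\lambda_k}{1-\ol{\lambda_j}\lambda_k}\big|$ one has $|G_{jk}|=(1-\rho_{jk}^2)^{1/2}$, and the Carleson condition $\prod_{k\ne j}\rho_{jk}\ge\delta$ together with the elementary inequality $1-x^2\le2\log(1/x)$ gives $\sum_{k\ne j}(1-\rho_{jk}^2)\le-2\log\delta$, which (with the $\delta$-separation of $\Lambda$) feeds the standard Carleson-box argument and yields $C\le\Delta$. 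For the lower bound I would use $c=\|(RR^*)^{-1}\|^{-1}\ge\|E\|^{-2}$, valid for every bounded right inverse $E$ of $R$, and construct $E$ from the Blaschke subproducts $B_j:=\prod_{k\ne j}b_{\lambda_k}$, where $b_\lambda$ is the Blaschke factor vanishing at $\lambda$ with $|b_\lambda(z)|=\big|\tfrac{\lambda-z}{1-\ol{\lambda}z}\big|$: since $B_j(\lambda_k)=0$ for $k\ne j$ and $|B_j(\lambda_j)|=\prod_{k\ne j}\rho_{jk}\ge\delta$, the operator
$$
E\beta:=\sum_j\beta_j\,(1-|\lambda_j|^2)^{1/2}\,\frac{B_j}{B_j(\lambda_j)}\,k_{\lambda_j}
$$
satisfies $RE=I$, and the estimate $\|E\|^2\le\Delta$ then gives $c\ge\Delta^{-1}$.

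The reduction in the first paragraph is routine; the substantive part is the pair of quantitative estimates, above all $\|E\|^2\le\Delta$. There the \emph{exact} constant $\Delta=\tfrac2{\delta^4}(1-2\log\delta)$ has to be extracted: the bound $|B_j(\lambda_j)|^{-1}\le\delta^{-1}$ enters squared, and the remaining factor of the form $\tfrac2{\delta^2}(1-2\log\delta)$ comes from a Carleson-measure estimate for $\mu_\Lambda$. This needs a genuine Cauchy--Schwarz / Carleson-box argument rather than the naive Schur-test bound $\|G-I\|\le\sup_j\sum_{k\ne j}|G_{jk}|$, which is too crude here since for a general uniformly separated sequence the row sums $\sum_{k\ne j}(1-\rho_{jk}^2)^{1/2}$ need not be controlled by $\delta$ alone. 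The same computation could instead be carried out inside the model space $\calH_h=L^2_+\ominus hL^2_+$ for $h$ the Blaschke product with zero set $\Lambda$, using that $(\wh k_{\lambda_j})_j$ is then a Riesz basis of $\calH_h$, but the Hardy-space formulation seems the most transparent.
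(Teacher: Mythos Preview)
Your approach is essentially the same as the paper's: both reduce the frame-bound computation to the Shapiro--Shields quantitative bounds for the interpolation operator $R=T_\Lambda$, with your reproducing-kernel/Gram-matrix formulation $G=RR^*$ being dual to the paper's synthesis-operator factorization $U=\mathbb F D T_\Lambda\calF$. The only difference is presentational: the paper simply cites \cite[Lemmas~1 and~3]{ss} for $\|T_\Lambda\|^2\le\Delta$ and the existence of $\phi$ with $\|\phi\|^2\le\Delta\|T_\Lambda\phi\|^2$ (equivalently, your right-inverse bound $\|E\|^2\le\Delta$), whereas you sketch the Shapiro--Shields argument itself.
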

\begin{proof}
Recall that $H^2 := H^2(\D)$ is the Hardy space on the unit disk $\D$ consisting of functions of the form $\vphi(z) = \sum_{n=0}^\infty c_nz^n$, $z\in\D$, where $(c_n)_{n\in\N}\in\ell^2(\N)$. Then $\|\vphi\|_{H^2} = \|c\|_{\ell^2}$. Let us also define the Fourier transforms $\mathbb F : \ell^2(\N)\to\calH$ and $\calF : \ell^2(\N)\to H^2$ by $\mathbb Fc := \sum_{j\in\N}c_je_j$ and $\calF c(z) := \sum_{n\in\N}c_nz^n$. By $U$ we denote the synthesis operator of the frame $(T^nf_0)_{n\in\N}$. Then for $c\in\ell^2(\N)$ we have
\begin{align*}
Uc = \sum_{n=0}^\infty c_n\sum_{j=0}^\infty\la_j^n\<f_0,e_j\>e_j = \sum_{j=0}^\infty\<f_0,e_j\>\left(\sum_{n=0}^\infty c_n\la_j^n\right)e_j = \sum_{j=0}^\infty\<f_0,e_j\>(\calF c)(\la_j)e_j.
\end{align*}
Now, define the interpolation operator $T_\Lambda : H^2\to\ell^2(\N)$ by
$$
T_\Lambda\vphi := \left((1-|\la_j|^2)^{1/2}\vphi(\la_j)\right)_{j\in\N},\quad\vphi\in H^2.
$$
As $\Lambda$ is uniformly separated, the operator $T_\Lambda$ is well-defined, bounded, and onto (see, e.g., \cite{ss}). Denoting $\hat c := \calF c$, we have
$$
Uc = \sum_{j=0}^\infty\frac{\<f_0,e_j\>}{\sqrt{1-|\la_j|^2}}(T_\Lambda\hat c)_je_j = \mathbb FDT_\Lambda\hat c,
$$
where $D = \operatorname{diag}_{j\in\N}(\<f_0,e_j\>(1-|\la_j|^2)^{-1/2})$. That is,
$$
U = \mathbb FDT_\Lambda\calF.
$$
Since $T_\Lambda$ is onto, its adjoint $T_\Lambda^*$ is bounded from below, i.e., we have $\|T_\Lambda^*c\|_{H^2}\ge\gamma'\|c\|_{\ell^2}$ for all $c\in\ell^2(\N)$ with some $\gamma' > 0$. The supremum $\gamma$ of all such $\gamma'$ is called the reduced minimum modulus of $T_\Lambda^*$ and can be written as follows in terms of $T_\Lambda$ (cf., e.g., \cite[Lemma 2.1]{dp}):
$$
\gamma = \inf\left\{\|T_\Lambda\vphi\|_{\ell^2} : \vphi\in H^2\ominus\ker(T_\Lambda),\,\|\vphi\|_{H^2}=1\right\}.
$$
In order to estimate $\gamma$, let $\vphi\in H^2\ominus\ker(T_\Lambda)$ with $\|\vphi\|_{H^2}=1$. By \cite[Lemma 3]{ss}, there exists some $\phi\in H^2$ such that $T_\Lambda\phi = T_\Lambda\vphi$ and $\|\phi\|_{H^2}^2\le\Delta\|T_\Lambda\phi\|_{\ell^2}^2$. As $\phi - \vphi\in\ker(T_\Lambda)$ and $\vphi\in\ker(T_\Lambda)^\perp$, we have $\|\phi\|_{H^2}\ge\|\vphi\|_{H^2} = 1$ and thus
$$
\|T_\Lambda\vphi\|_{\ell^2}^2\ge\Delta^{-1}\|\phi\|_{H^2}^2\ge\Delta^{-1},
$$
which yields $\gamma\ge\Delta^{-1/2}$. Finally, for $f\in\calH$ we obtain
$$
\|U^*f\|_{\ell^2}^2 = \|\calF^*T_\Lambda^*D^*\mathbb F^*f\|_{\ell^2}^2\,\ge\,\gamma^2\|D^*\mathbb F^*f\|_{\ell^2}^2\,\ge\,\Delta^{-1}\alpha\|f\|^2.
$$
This proves that $\alpha\Delta^{-1}$ is indeed a lower frame bound of $(T^nf_0)_{n\in\N}$. The optimal upper frame bound is $\|U\|^2$. Hence, an upper frame bound is given by $\|D\|^2\|T_\Lambda\|^2 = \beta\|T_\Lambda\|^2$. By \cite[Lemma 1 and Lemma 3]{ss} we have $\|T_\Lambda\|^2\le\Delta$.
\end{proof}

The next proposition generalizes Theorem \ref{t:acmt} to the  class of operators that are similar to a normal operator. As Theorem \ref{t:acmt}, it is easy to apply the result to obtain  explicit constructions. Note in particular the condition (ii), which clearly relates to the general characterization of frames
$(T^nf)_{n\in\N}$ in Theorem \ref{t:charac_N}; we will comment on that in Remark \ref{r:comp_normal}. Recall that if $(g_j)_{j\in\N}$ is a Riesz basis for $\calH$, then there exists a unique sequence $(h_j)_{j\in\N}$ such that $(g_j)_{j\in\N}$ and $(h_j)_{j\in\N}$ are bi-orthogonal, i.e., $\<g_j,h_k\> = \delta_{jk}$ for $j,k\in\N$. This sequence is then also a Riesz basis for $\calH$ and is called the {\em dual basis} of $(g_j)_{j\in\N}$. We denote it by $(g_j')_{j\in\N}$.

\begin{prop}\label{p:similar_normal}
For $T\in L(\calH)$ and $f_0\in\calH$ the following statements are equivalent.
\begin{enumerate}
\item[{\rm (i)}]   $T$ is similar to a normal operator and $(T^nf_0)_{n\in\N}$ is a frame for $\calH$.
\item[{\rm (ii)}]  $(T,f_0)\cong (A_h,\phi_h)$, where $h$ is a Blaschke product defined by a uniformly separated sequence.
\item[{\rm (iii)}] $T = \sum_{j=0}^\infty\la_j\<\,\cdot\,,g_j\>g_j'$, where $(\la_j)_{j\in\N}\subset\D$ is uniformly separated, $(g_j)_{j\in\N}$ is a Riesz basis for $\calH$, and $|\<f_0,g_j\>|\sim\sqrt{1-|\la_j|^2}$.
\end{enumerate}
In case {\rm (iii)} holds, the frame $(T^nf_0)_{n\in\N}$ has the frame bounds $\alpha\Delta^{-1}B^{-1}$ and $\beta\Delta A^{-1}$, where $\Delta$ is as in \eqref{e:alphaDelta},
$$
\alpha := \inf_{j\in\N}\frac{|\<f_0,g_j\>|^2}{1-|\la_j|^2},
\qquad
\beta := \sup_{j\in\N}\frac{|\<f_0,g_j\>|^2}{1-|\la_j|^2},
$$
and $A$ and $B$ are the Riesz bounds of $(g_j)_{j\in\N}$.
%$L\in GL(\calH)$ is the positive operator defined by $Lg_j = g_j'$, $j\in\N$.
\end{prop}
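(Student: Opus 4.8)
The plan is to prove the two equivalences (i)$\Leftrightarrow$(iii) and (ii)$\Leftrightarrow$(iii); together they give (i)$\Leftrightarrow$(ii), and the frame-bound estimate will fall out of the proof of (iii)$\Rightarrow$(ii). For (i)$\Leftrightarrow$(iii) I would transport Theorem~\ref{t:acmt} across a similarity. If (i) holds, choose $W\in GL(\calH)$ with $N:=WTW^{-1}$ normal. Then $(T,f_0)\cong(N,Wf_0)$, so by Lemma~\ref{l:similar-1} the orbit $(N^n(Wf_0))_{n\in\N}$ is a frame for $\calH$, and Theorem~\ref{t:acmt} gives $N=\sum_{j}\la_j\<\,\cdot\,,u_j\>u_j$ with $(u_j)_{j\in\N}$ an orthonormal basis of $\calH$, $(\la_j)_{j\in\N}\subset\D$ uniformly separated, and $|\<Wf_0,u_j\>|^2\sim 1-|\la_j|^2$. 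Set $g_j:=W^*u_j$; this is a Riesz basis of $\calH$, and since a bounded invertible operator carries the dual of a Riesz basis to the dual of its image, $W^{-1}u_j$ equals the dual-basis element $g_j'$. Conjugating the diagonalization of $N$ by $W^{-1}$ then gives $T=\sum_j\la_j\<\,\cdot\,,g_j\>g_j'$, while $|\<f_0,g_j\>|=|\<Wf_0,u_j\>|\sim\sqrt{1-|\la_j|^2}$; this is (iii). Conversely, given (iii), pick $W\in GL(\calH)$ such that $(W^*)^{-1}g_j$ is an orthonormal basis $(u_j)$ (possible as $(g_j)$ is a Riesz basis); the same bookkeeping gives $WTW^{-1}=\sum_j\la_j\<\,\cdot\,,u_j\>u_j=:N$ normal with $\<Wf_0,u_j\>=\<f_0,g_j\>$, so the sufficiency direction of Theorem~\ref{t:acmt} makes $(N^n(Wf_0))_{n\in\N}$ --- hence $(T^nf_0)_{n\in\N}$ by Lemma~\ref{l:similar-1} --- a frame, and $T=W^{-1}NW$ is similar to the normal operator $N$.

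For (ii)$\Leftrightarrow$(iii) I would use the reproducing-kernel structure of the model spaces. Let $h$ be an interpolating (i.e.\ uniformly separated) Blaschke product with zero sequence $\Lambda=(\la_j)$; such an $h$ is automatically an infinite Blaschke product, so $h\in\frakI^*$. Identify $L^2_+$ with $H^2$ and set $e_j:=(1-|\la_j|^2)^{1/2}k_{\la_j}$, the normalized Szeg\H{o} kernel at $\la_j$. By the Shapiro--Shields / Carleson interpolation theorem (see \cite{n}), $(e_j)_{j\in\N}$ is a Riesz basis for $\calH_h$; from $A_h^*k_{\la_j}=\ol{\la_j}k_{\la_j}$ (a standard model-operator fact) one obtains $A_h=\sum_j\la_j\<\,\cdot\,,e_j\>e_j'$ with $(e_j')$ the dual basis, and since $e_j\in\calH_h$ we get $\<\phi_h,e_j\>=\<1_\T,e_j\>=(1-|\la_j|^2)^{1/2}$, hence $\phi_h=\sum_j(1-|\la_j|^2)^{1/2}e_j'$. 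For (ii)$\Rightarrow$(iii): if $(T,f_0)\cong(A_h,\phi_h)$ via $V$, then $g_j:=(V^*)^{-1}e_j$ is a Riesz basis of $\calH$ with $Ve_j'=g_j'$, and conjugating the two displayed identities by $V$ gives $T=\sum_j\la_j\<\,\cdot\,,g_j\>g_j'$ and $f_0=\sum_j(1-|\la_j|^2)^{1/2}g_j'$, so $|\<f_0,g_j\>|=\sqrt{1-|\la_j|^2}$. For (iii)$\Rightarrow$(ii): take $h$ to be the Blaschke product with zeros $(\la_j)$ and define $V$ on the Riesz basis $(e_j')$ of $\calH_h$ by $Ve_j':=\big(\<f_0,g_j\>(1-|\la_j|^2)^{-1/2}\big)g_j'$. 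The hypothesis $|\<f_0,g_j\>|\sim\sqrt{1-|\la_j|^2}$ forces these multipliers to be bounded above and bounded away from $0$, so $V$ maps the Riesz basis $(e_j')$ onto a Riesz basis and extends to $V\in GL(\calH_h,\calH)$; by construction $V\phi_h=f_0$, and since $A_he_j'=\la_je_j'$ and $Tg_j'=\la_jg_j'$ the operators $VA_hV^{-1}$ and $T$ agree on the complete system $(g_j')$, hence coincide. Thus $(T,f_0)\cong(A_h,\phi_h)$.

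For the frame bounds under (iii): the construction just made produces $V\in GL(\calH_h,\calH)$ with $(T,f_0)\cong(A_h,\phi_h)$, and $(A_h^n\phi_h)_{n\in\N}$ is a Parseval frame for $\calH_h$ by Lemma~\ref{l:prototype}; arguing as in Lemma~\ref{l:similar-1}, $\sum_{n\in\N}|\<f,T^nf_0\>|^2=\|V^*f\|^2$ for all $f\in\calH$, so $\|V^{-1}\|^{-2}$ and $\|V\|^2$ are frame bounds of $(T^nf_0)_{n\in\N}$. Factor $V=\Psi D\Phi^{-1}$, where $\Phi$ and $\Psi$ are the synthesis operators of $(e_j')$ and $(g_j')$ and $D$ is the diagonal operator on $\ell^2(\N)$ with entries $\<f_0,g_j\>(1-|\la_j|^2)^{-1/2}$. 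Then $\|D\|=\sqrt\beta$ and $\|D^{-1}\|=\alpha^{-1/2}$; $\|\Psi\|^2=A^{-1}$ and $\|\Psi^{-1}\|^2=B$ (the dual $(g_j')$ of the Riesz basis $(g_j)$ with bounds $A,B$ has Riesz bounds $B^{-1},A^{-1}$); and $\|\Phi\|^2,\|\Phi^{-1}\|^2\le\Delta$, because the restriction to $\calH_h$ of the interpolation operator $T_\Lambda$ from the proof of Theorem~\ref{t:acmt} is precisely the analysis operator of $(e_j)$, so the estimates established there (the norm bound $\|T_\Lambda\|^2\le\Delta$ and the lower bound on its reduced minimum modulus) place the Riesz bounds of $(e_j)$, and hence of $(e_j')$, in $[\Delta^{-1},\Delta]$. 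Multiplying these out yields $\|V\|^2\le\beta\Delta A^{-1}$ and $\|V^{-1}\|^{-2}\ge\alpha\Delta^{-1}B^{-1}$, as claimed.

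The main obstacle is this last constant-chasing step: matching the Riesz bounds of the reproducing-kernel basis $(e_j)$ of $\calH_h$ with the Carleson quantity $\Delta$ by recycling --- rather than reproving --- the interpolation estimates behind Theorem~\ref{t:acmt}, and keeping straight how Riesz bounds transform under passage to a dual basis and under synthesis and analysis operators. A more routine but pervasive point is the identity $W^{-1}u_j=g_j'$ (resp.\ $Ve_j'=g_j'$): it is precisely the fact that a bounded invertible operator sends the dual of a Riesz basis to the dual of the image basis that lets one similarity simultaneously line up the operators, the generating vectors, and the inner-product data of (iii).
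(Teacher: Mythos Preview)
Your argument is correct, and your treatment of (i)$\Leftrightarrow$(iii) coincides with the paper's: both transport Theorem~\ref{t:acmt} through a similarity $T=W^{-1}NW$ and use that a Riesz basis $(g_j)$ with dual $(g_j')$ is precisely of the form $g_j=(W^{-1})^*e_j$, $g_j'=We_j$ for some $W\in GL(\calH)$ and an orthonormal basis $(e_j)$.

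The route to (ii), however, is genuinely different. The paper does \emph{not} prove (ii)$\Leftrightarrow$(iii); instead it obtains (i)$\Leftrightarrow$(ii) by combining Theorem~\ref{t:charac_N} with Kacnel'son's theorem (cited from \cite{ka}, see also \cite[p.~212]{n}) that $A_h$ is similar to a normal operator if and only if $h$ is a Blaschke product over a uniformly separated sequence. Your approach bypasses both Theorem~\ref{t:charac_N} and the Kacnel'son black box by working directly with the reproducing-kernel Riesz basis $(e_j)$ in $\calH_h$ and the Shapiro--Shields interpolation theorem. This is more self-contained (the Shapiro--Shields estimates are already invoked inside the proof of Theorem~\ref{t:acmt}, so you are not importing anything new), and it makes the diagonalization of $A_h$ on $\calH_h$ explicit; the paper's version is shorter because it outsources that structure to the literature.

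For the frame bounds your factorization $V=\Psi D\Phi^{-1}$ and the estimate $\|\Phi\|^2,\|\Phi^{-1}\|^2\le\Delta$ via the interpolation operator $T_\Lambda$ are fine, but this is more work than necessary. Having already written $T=W^{-1}NW$ with $N$ normal and $\<W f_0,u_j\>=\<f_0,g_j\>$ in your proof of (i)$\Leftrightarrow$(iii), you can simply quote the bounds $\alpha\Delta^{-1}$, $\beta\Delta$ for $(N^n Wf_0)_{n\in\N}$ from Theorem~\ref{t:acmt} and observe that passing through $W^{-1}$ multiplies the frame bounds by quantities in $[\inf\sigma(WW^*),\sup\sigma(WW^*)]\subset[B^{-1},A^{-1}]$; this is exactly what the paper does, and it avoids the detour through the Parseval frame $(A_h^n\phi_h)_{n\in\N}$.
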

\begin{proof}
It was shown in \cite{ka} (see also \cite[p.\ 212]{n}) that $A_h$ ($h\in\frakI^*$) is similar to a normal operator if and only if $h$ is a Blaschke product defined by a uniformly separated sequence. This proves the equivalence of (i) and (ii). Here, $h=0$ can be excluded since $A_0 = M_\T^+$ is clearly not similar to a normal operator. The equivalence of (i) and (iii) follows from Theorem \ref{t:acmt} and the simple fact that, fixing an orthonormal basis $(e_j)_{j\in\N}$ of $\calH$, two sequences $(g_j)_{j\in\N}$ and $(h_j)_{j\in\N}$ are bi-orthogonal Riesz bases for $\calH$ if and only if $g_j = (W^{-1})^*e_j$ and $h_j = We_j$ with some $W\in GL(\calH)$.

Assume now that (iii) holds, let $(e_j)_{j\in\N}$ be an orthonormal basis for $\calH$, and choose $W\in GL(\calH)$ such that $g_j = (W^{-1})^*e_j$ and $g_j' = We_j$, $j\in\N$. Then  $T = WNW^{-1}$, where $N = \sum_j\la_j\<\,\cdot\,,e_j\>e_j$. In particular, $(T^nf_0)_{n\in\N} = (WN^nW^{-1}f_0)_{n\in\N}$, which has frame bounds $a_0\inf\sigma(L)$ and $b_0\sup\sigma(L)$, where $L = WW^*$ and $a_0$ and $b_0$ denote frame bounds of $(N^nW^{-1}f_0)_{n\in\N}$. By Theorem \ref{t:acmt} we know that we can take $a_0 = \alpha\Delta^{-1}$ and $b_0 = \beta\Delta$. Now, if $f\in\calH$, $f = \sum_j c_jg_j$ with $c\in\ell^2(\N)$, then
$$
\<Lf,f\> = \|W^*f\|^2 = \Bigg\|\sum_{j=0}^\infty c_je_j\Bigg\|^2 = \sum_{j=0}^\infty|c_j|^2,
$$
and hence $\<Lf,f\>\ge B^{-1}\|f\|^2$ and $\<Lf,f\>\le A^{-1}\|f\|^2$, which implies $\inf\sigma(L)\ge B^{-1}$ and $\sup\sigma(L)\le A^{-1}$.
%Assume that (i) holds. Then $T = VT_0V^{-1}$, where $V\in GL(\calH)$ and $T_0\in L(\calH)$ is normal. Put $g = V^{-1}f_0$. Then  $(T_0^ng)_{n\in\N}$ is a frame for $\calH$ and thus $(T_0,g)\cong (A_h,\phi_h)$ with some $h\in\frakI^*\cup\{0\}$. We also have $T_0 = \sum_{j=0}^\infty\la_j\<\,\cdot\,,e_j\>e_j$ with $(e_j)_{j\in\N}$ and $\Lambda := (\la_j)_{j\in\N}$ as in Theorem \ref{t:acmt}. Setting $g_j = V^{-*}e_j$ and $h_j = Ve_j$ one immediately checks that (iii) holds. As the $\la_j$ are the eigenvalues of $A_h$ in $\D$ and $h$ is the minimal function of $A_h$, the Blaschke product $B_\Lambda$ with respect to the sequence $\Lambda$ divides $h$. On the other hand, $B_\Lambda(T_0) = \sum_j B_\Lambda(\la_j)\<\,\cdot\,,e_j\>e_j = 0$ and therefore also $B_\Lambda(A_h) = 0$, which implies that $h$ divides $B_\Lambda$. Thus, $h = B_\Lambda$ (up to unimodular constant multiples). Hence, (i) implies (ii) and (iii).
%
%If (ii) holds, then $A_h$ is similar to a normal operator (see, e.g., \cite{n}). Hence, so is $T$ and (i) follows. It thus remains to prove that (iii) implies (i). If $(e_j)_{j\in\N}$ is an arbitrary orthonormal basis for $\calH$, there exists $W\in GL(\calH)$ such that $h_j = We_j$, $j\in\N$. Since $(g_j)_{j\in\N}$ and $(h_j)_{j\in\N}$ are bi-orthogonal, it follows that $g_j = W^{-*}e_j$, $j\in\N$. Hence, $T = WT_1W^{-1}$, where $T_1 = \sum_{j=0}^\infty\la_j\<\,\cdot\,,e_j\>e_j$. If $g := W^{-1}f$, then $\<g,e_j\> = \<f,g_j\>$ for $j\in\N$. By Theorem  \ref{t:acmt}, $(T_1^ng)_{n\in\N}$ is a frame for $\calH$, which implies that the same holds for $(T^nf)_{n\in\N}$.
\end{proof}

\begin{rem}\label{r:comp_normal}
Consider $T\in L(\calH)$ and $f_0\in\calH$ such that $(T^nf_0)_{n\in\N}$ is a frame for $\calH$. Then by Theorem \ref{t:charac_N}, $T$ is similar to $A_h$ for some unique $h\in\frakI^*\cup\{0\}$. Our results show that
\begin{itemize}
	\item $(T^nf_0)_{n\in\N}$ is a Riesz basis if and only if $h=0$.
	\item $T$ is similar to a normal operator if and only if $h$ is a Blaschke product generated by a uniformly separated sequence.
\end{itemize}
Since the Blaschke products generated by uniformly separated sequences form a very small set within the class of all inner functions, it follows that the  frames $(T^nf_0)_{n\in\N}$ generated by an operator $T\in L(\calH)$ that is similar to a normal operator are very particular. In other words - the class of frames $(T^nf_0)_{n\in\N}$ that are known so far (i.e, the Riesz bases and the constructions arising from Theorem \ref{t:acmt}) form a very small subclass of all possible constructions. It remains a very interesting and challenging open problem to give a more concrete and constructive way of obtaining such frames for general bounded operators $T$.
\end{rem}

In the following theorem we construct frames of the form $(T^nf_0)_{n\in\N}$ with a non-normal operator $T\in L(\calH)$ by means of perturbations of frames as considered in Theorem \ref{t:acmt}.

\begin{thm} \label{80104b}
Let $(\la_j)_{j\in\N}$ be a uniformly separated sequence in $\D$, $(e_j)_{j\in\N}$ an orthonormal basis for $\calH$, and $f_0\in\calH$ such that $|\<f_0,e_j\>|\sim\sqrt{1 - |\la_j|^2}$. For fixed $k,\ell\in\N$, $k\neq\ell$, and $\tau\in\C$, $\tau\neq(\la_k-\la_\ell)\tfrac{\<f_0,e_\ell\>}{\<f_0,e_k\>}$, consider the operator $T_\tau\in L(\calH)$, defined by
$$
T_\tau f := \sum_{j=0}^\infty\la_j\<f,e_j\>e_j + \tau\<f,e_k\>e_\ell,\quad f\in\calH.
$$
Then $(T_\tau ^nf_0)_{n\in\N}$ is a frame for $\calH$. For $\tau\neq 0$ the operator $T_\tau$ is not normal.
\end{thm}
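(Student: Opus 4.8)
The plan is to reduce the frame assertion, \emph{via} a similarity transform, to the characterisation of frame orbits of normal operators in Theorem~\ref{t:acmt}. Write $N:=\sum_{j=0}^\infty\la_j\langle\,\cdot\,,e_j\rangle e_j$ for the diagonal operator underlying $T_\tau$, so that $T_\tau$ is the rank-one perturbation $T_\tau f=Nf+\tau\langle f,e_k\rangle e_\ell$. Since $(\la_j)_{j\in\N}$ is uniformly separated, the $\la_j$ are pairwise distinct; in particular $\la_k\neq\la_\ell$, and this is exactly what keeps the perturbed operator diagonalisable rather than forcing a genuine Jordan block.

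First I would construct an explicit intertwiner. The two-dimensional subspace $\calE:=\linspan\{e_k,e_\ell\}$ is $T_\tau$-invariant, with $T_\tau e_\ell=\la_\ell e_\ell$ and $T_\tau e_k=\la_k e_k+\tau e_\ell$, and $\la_k,\la_\ell$ are distinct eigenvalues of $T_\tau|_\calE$. Define $W\in L(\calH)$ by $We_j:=e_j$ for $j\notin\{k,\ell\}$, $We_\ell:=e_\ell$ and $We_k:=(\la_k-\la_\ell)e_k+\tau e_\ell$. Then $W$ is block diagonal for $\calH=\calE\oplus\calE^\perp$, is the identity on $\calE^\perp$, and on $\calE$ has determinant $\la_k-\la_\ell\neq 0$; hence $W\in GL(\calH)$. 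Checking $WN=T_\tau W$ on each basis vector (the only nontrivial case being $e_k$) gives $T_\tau=WNW^{-1}$, so that $(T_\tau,f_0)\cong(N,g)$ with $g:=W^{-1}f_0$. By Lemma~\ref{l:similar-1} it then suffices to show that $(N^n g)_{n\in\N}$ is a frame for $\calH$.

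To apply Theorem~\ref{t:acmt} to the pair $(N,g)$ I must verify its coefficient condition for $g$. Expanding $f_0=Wg=\sum_j\langle g,e_j\rangle We_j$ and matching coefficients gives $\langle g,e_j\rangle=\langle f_0,e_j\rangle$ for $j\notin\{k,\ell\}$, together with
\[
\langle g,e_k\rangle=\frac{\langle f_0,e_k\rangle}{\la_k-\la_\ell}\qquad\text{and}\qquad
\langle g,e_\ell\rangle=\langle f_0,e_\ell\rangle-\frac{\tau\langle f_0,e_k\rangle}{\la_k-\la_\ell}.
\]
For $j\notin\{k,\ell\}$ the comparison $|\langle g,e_j\rangle|\sim\sqrt{1-|\la_j|^2}$ is inherited from $f_0$. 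The two remaining coefficients are nonzero: $\langle g,e_k\rangle\neq 0$ since $|\langle f_0,e_k\rangle|\sim\sqrt{1-|\la_k|^2}>0$, and $\langle g,e_\ell\rangle\neq 0$ precisely because $\tau\neq(\la_k-\la_\ell)\tfrac{\langle f_0,e_\ell\rangle}{\langle f_0,e_k\rangle}$. Enlarging the two comparison constants by a finite factor to absorb the indices $k$ and $\ell$, we obtain $|\langle g,e_j\rangle|^2\sim 1-|\la_j|^2$ for all $j\in\N$. Theorem~\ref{t:acmt} then yields that $(N^n g)_{n\in\N}$ is a frame for $\calH$, and hence so is $(T_\tau^n f_0)_{n\in\N}$.

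For the non-normality statement, the rank-one summand $f\mapsto\tau\langle f,e_k\rangle e_\ell$ has adjoint $x\mapsto\ol\tau\langle x,e_\ell\rangle e_k$, so (using $k\neq\ell$) $T_\tau^*e_k=\ol{\la_k}e_k$; hence $\|T_\tau e_k\|^2=|\la_k|^2+|\tau|^2>|\la_k|^2=\|T_\tau^*e_k\|^2$ whenever $\tau\neq0$, contradicting the identity $\|T_\tau x\|=\|T_\tau^*x\|$ for all $x$ that holds for normal operators. I expect the only genuinely delicate point to be the coordinate bookkeeping in the third paragraph: one must see that the single excluded value of $\tau$ is exactly the one making $\langle g,e_\ell\rangle$ vanish — in which case $(N^n g)_{n\in\N}$, and with it $(T_\tau^n f_0)_{n\in\N}$, would fail even to be complete.
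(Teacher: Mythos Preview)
Your proof is correct and follows essentially the same route as the paper: both arguments diagonalise $T_\tau$ by an explicit rank-one (on the $\{e_k,e_\ell\}$ block) similarity and then invoke the normal-operator characterisation. The paper phrases this via Proposition~\ref{p:similar_normal} with the Riesz basis $g_j'=We_j$ (your $W$ is exactly the operator sending $e_j\mapsto h_j$ in the paper's notation), while you unpack that proposition and apply Theorem~\ref{t:acmt} and Lemma~\ref{l:similar-1} directly; the coefficient bookkeeping and the non-normality computation coincide.
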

\begin{proof}
Setting $d := \la_k-\la_\ell$, we define sequences $(g_j)_{j\in\N}$ and $(h_j)_{j\in\N}$ by $g_j := h_j := e_j$ for $j\notin\{\ell,k\}$ and
$$
g_\ell := e_\ell - \ol\tau\ol d^{-1} e_k,\quad
g_k := \ol d^{-1}e_k,\quad
h_\ell := e_\ell,\quad
h_k := \tau e_\ell + de_k.
$$
Note that $d\neq 0$ as $(\la_j)_{j\in\N}$ is uniformly separated. A simple calculation shows that $(g_j)_{j\in\N}$ is a Riesz basis for $\calH$ with $g_j' = h_j$, $j\in\N$, and that $T_\tau = \sum_{j=0}^\infty\la_j\<\,\cdot\,,g_j\>g_j'$. In order to apply Proposition \ref{p:similar_normal} it remains to show that $|\<f_0,g_j\>|\sim\sqrt{1-|\la_j|^2}$. Since we already know that $|\<f_0,e_j\>|\sim\sqrt{1-|\la_j|^2}$, it suffices that $\<f_0,g_\ell\>\neq 0$ and $\<f_0,g_k\>\neq 0$. But this is exactly the case when $\tau\neq(\la_k-\la_\ell)\tfrac{\<f_0,e_\ell\>}{\<f_0,e_k\>}$. The fact that $T_\tau$ is not normal for $\tau\neq 0$ simply follows from $\|T_\tau e_k\|^2 = |\la_k|^2 + |\tau|^2$ and $\|T_\tau^*e_k\|^2 = |\la_k|^2$.
\end{proof}

\appendix

\vspace*{.2cm}
\section{Contractions}\label{ap:contractions}
By $H^\infty$ denote the set of all bounded analytic functions on the open unit disk $\D$. It is well known that each $h\in H^\infty$ has radial limits on the unit circle $\T$ almost everywhere, that is,
$$
h^*(z) := \lim_{r\upto 1}h(rz)
$$
exists for a.e. $z\in\T$. The function $h^*$ is then an element of $L^\infty = L^\infty(\T)$ with vanishing negative Fourier coefficients, i.e., $h^*\in L^2_+$ (cf.\ \eqref{e:L2+}). Conversely, for every such function $g\in L^\infty$ there exists some $h\in H^\infty$ such that $g = h^*$. A function $h\in H^\infty$ with $|h^*(z)| = 1$ for a.e.\ $z\in\T$ is called an {\em inner function}.

A contraction on a Hilbert space $\calH$ is an operator $T\in L(\calH)$ with operator norm $\|T\|\le 1$.  A contraction $T$ is said to be {\em completely non-unitary} (c.n.u.), if there is no non-trivial subspace $M\subset\calH$ reducing $T$ such that $T|M$ is unitary. For a c.n.u.\ contraction $T$ there exists an $H^\infty$-functional calculus, which is defined as follows:
$$
u(T)f := \lim_{r\upto 1}u_r(T)f,\qquad f\in\calH,\;u\in H^\infty,
$$
where $u_r(z) := u(rz)$, $r\in (0,1)$, $z\in\D$, see \cite[Ch.\ III.2]{nfbk}. The operator $u_r(T)$ is well defined because if $u(z) = \sum_{n=0}^\infty\alpha_nz^n$, then $u_r(z) = \sum_{n=0}^\infty r^n\alpha_nz^n$, $z\in\D$, so that $(r^n\alpha_n)_{n\in\N}\in\ell^1(\N)$ for every $r\in (0,1)$. The mapping $u\mapsto u(T)$ is an algebra homomorphism from $H^\infty$ to $L(\calH)$.

A c.n.u.\ contraction $T$ is said to belong to the class $C_0$ if there exists some non-trivial $u\in H^\infty$ which annihilates $T$, that is, $u(T) = 0$. Among the non-vanishing $T$-annihilating $H^\infty$-functions there exists a {\em minimal function} $m_T$  which is a divisor of all of them. The minimal function is always an inner function and can be seen as a generalization of the minimal polynomial of linear operators on finite-dimensional spaces. In particular, it determines the spectrum of $T$ uniquely (cf.\ \cite[Ch.\ III, Thm.\ 5.1]{nfbk}). We shall make use of the following simple lemma, which directly follows from \cite[Ch.\ III, Prop.\ 4.6]{nfbk}.

\begin{lem}[{\cite{nfbk}}]\label{l:minimals}
If $S$ and $T$ are $C_0$-contractions which are similar to each other, then $m_S = m_T$ \braces{up to unimodular constant multiples}.
\end{lem}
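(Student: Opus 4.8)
The plan is to reduce Lemma \ref{l:minimals} to the cited result \cite[Ch.\ III, Prop.\ 4.6]{nfbk}, which characterizes the minimal function of a $C_0$-contraction via its action on $H^\infty$. Recall that for a $C_0$-contraction $T$, the set of $T$-annihilating functions $\{u\in H^\infty : u(T)=0\}$ is a (closed, in the weak-$*$ sense) ideal in $H^\infty$, and the minimal function $m_T$ is, up to a unimodular constant factor, the inner divisor common to all of them; equivalently, $u(T)=0$ if and only if $m_T$ divides $u$ in $H^\infty$. So it suffices to show that $S$ and $T$ being similar forces them to have the same annihilating ideal.

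First I would fix $V\in GL(\calH_1,\calH_2)$ with $S = V^{-1}TV$ (relabeling the Hilbert spaces on which $S$ and $T$ act; if they act on the same space this is immediate). The key observation is that the $H^\infty$-functional calculus is compatible with similarity: for every $u\in H^\infty$ one has $u(S) = V^{-1}u(T)V$. To see this, note that for $u$ a polynomial this is the trivial identity $V^{-1}p(T)V = p(V^{-1}TV) = p(S)$; for general $u\in H^\infty$ with $u_r(z)=u(rz)$, the operators $u_r(T)$ converge strongly to $u(T)$ and $u_r(S)$ converge strongly to $u(S)$ by the definition of the calculus recalled in Appendix \ref{ap:contractions}, and since $u_r(S) = V^{-1}u_r(T)V$ for each $r\in(0,1)$ (as $u_r$ has an absolutely convergent Taylor series, hence is a norm-limit of polynomials), passing to the strong limit gives $u(S) = V^{-1}u(T)V$. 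One small point to check along the way: $S$ being similar to a $C_0$-contraction, it is itself a $C_0$-contraction by hypothesis, so its minimal function $m_S$ exists; this is given to us in the statement, so no work is needed there.

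From the intertwining relation $u(S) = V^{-1}u(T)V$ and the invertibility of $V$, it follows immediately that $u(S) = 0$ if and only if $u(T) = 0$. Hence $S$ and $T$ have exactly the same annihilating functions in $H^\infty$. By the characterization of the minimal function recalled above (\cite[Ch.\ III, Prop.\ 4.6]{nfbk}), $m_S$ is the minimal inner divisor of this common ideal, and so is $m_T$; since a nonzero ideal of $H^\infty$ of this type determines its minimal inner divisor uniquely up to a unimodular constant multiple, we conclude $m_S = m_T$ up to such a constant, as claimed.

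I do not anticipate a serious obstacle here: the entire content is the similarity-invariance of the $H^\infty$-calculus, and the only mildly delicate point is justifying the passage $u_r(S) = V^{-1}u_r(T)V \rightsquigarrow u(S) = V^{-1}u(T)V$ through the strong limit, which is routine once one recalls that $u_r$ has an $\ell^1$ Taylor coefficient sequence and that both calculi are defined as strong limits of the $u_r(\cdot)$. If one prefers to avoid even this, an alternative is to invoke directly that \cite[Ch.\ III, Prop.\ 4.6]{nfbk} (or the surrounding discussion in \cite{nfbk}) already records that similar $C_0$-contractions have the same characteristic function up to coincidence of the relevant invariants, from which equality of minimal functions is immediate; but the functional-calculus argument above is self-contained and short.
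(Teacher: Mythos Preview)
Your proposal is correct and is essentially the intended argument: the paper itself gives no proof beyond pointing to \cite[Ch.\ III, Prop.\ 4.6]{nfbk}, and your write-up supplies exactly the natural reduction, namely similarity-invariance of the $H^\infty$-calculus (via the $\ell^1$-series $u_r$ and strong limits), which forces $S$ and $T$ to share annihilating ideals and hence minimal functions.
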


\section{\texorpdfstring{Subspaces of $L^2(\T)$}{Subspaces of L\texttwosuperior(T)}}\label{ap:subspaces}
The following theorem is due to Beurling and Helson (see, e.g., \cite[Ch.\ I.1]{n}).

\begin{thm}[\cite{n}]\label{t:beurling}
{\rm (a)} Let $\calL\subset L^2$ be a closed $M_\T$-invariant subspace. If $M_\T\calL = \calL$, then there exists $\sigma\in\frakB(\T)$ such that $\calL = L^2(\sigma)$. If $M_\T\calL\neq\calL$, then $\calL = hL^2$ with some $h\in\frakI$.

\smallskip
{\rm (b)} Let $\calL\subset L^2_+$ be a closed $M_\T^+$-invariant subspace. Then $\calL = hL^2_+$ with some $h\in\frakI\cup\{0\}$.
\end{thm}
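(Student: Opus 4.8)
\emph{Approach.} The plan is to prove both parts by the classical wandering--subspace method of Beurling, the only exception being the reducing case of (a), which instead rests on the fact that the multiplication algebra on $L^2(\T)$ is maximal abelian. I would proceed in the order: (b) first; then the case $M_\T\calL=\calL$ of (a); and finally the case $M_\T\calL\neq\calL$ of (a), which is the argument for (b) carried out in $L^2(\T)$ rather than $L^2_+$.

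\emph{Part (b).} If $\calL=\{0\}$ take $h=0$; assume $\calL\neq\{0\}$. Since $M_\T^+$ is an isometry, $M_\T^+\calL=z\calL$ is closed, and I would first record that $z\calL\subsetneq\calL$: if $z\calL=\calL$ then $z^n\calL=\calL$ for all $n$, while $f\in z^nL^2_+$ forces the first $n$ Fourier coefficients of $f$ to vanish, so $\bigcap_n z^nL^2_+=\{0\}$ and $\calL=\{0\}$, a contradiction. Pick a unit vector $h\in\calL\ominus z\calL$. The two decisive steps are: (1) \emph{$h$ is inner.} For $n\ge1$, $z^nh=z(z^{n-1}h)\in z\calL$, hence $\<z^nh,h\>=\wh{|h|^2}(-n)=0$; as $|h|^2$ is real its positive Fourier coefficients vanish too, so $|h|^2$ agrees a.e.\ with its mean $\|h\|^2=1$, i.e.\ $h\in\frakI$. (2) \emph{$\calL=hL^2_+$.} Since $M_h$ is an isometry on $L^2_+$, the closed span of $\{z^nh:n\ge0\}$ equals $hL^2_+$ and lies in $\calL$. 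If $g\in\calL$ is orthogonal to every $z^kh$, $k\ge0$, then $\wh{g\ol h}(k)=0$ for $k\ge0$; and $h\perp z\calL$ together with $z^ng\in z\calL$ for $n\ge1$ gives $\wh{h\ol g}(n)=0$ for $n\ge1$. Using $h\ol g=\ol{g\ol h}$ and $\wh{\ol\varphi}(n)=\ol{\wh\varphi(-n)}$, the first family of identities also forces $\wh{h\ol g}(n)=0$ for $n\le0$; hence $h\ol g\equiv0$, so $g=0$ because $|h|=1$ a.e. Therefore $\calL=hL^2_+$.

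\emph{Part (a).} If $M_\T\calL=\calL$, then since $M_\T$ is unitary we also get $M_\T^*\calL=M_\T^{-1}\calL=\calL$, so $\calL$ reduces $M_\T$ and its orthogonal projection $P$ commutes with $M_\T$ and $M_\T^*=M_{\ol z}$, hence with all trigonometric-polynomial multipliers, hence with $M_\varphi$ for every $\varphi\in L^\infty(\T)$ by weak-$*$ approximation. As $\{M_\varphi:\varphi\in L^\infty(\T)\}$ is a maximal abelian subalgebra of $L(L^2(\T))$, we obtain $P=M_\psi$ with $\psi\in L^\infty(\T)$; then $P=P^2=P^*$ forces $\psi=\chi_\sigma$ for some $\sigma\in\frakB(\T)$, so $\calL=\ran P=L^2(\sigma)$. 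If instead $M_\T\calL\neq\calL$, then $z\calL\subsetneq\calL$ (and $z\calL$ is closed, $M_\T$ being unitary), and steps (1)--(2) of part (b) apply verbatim with $L^2_+$ replaced by $L^2(\T)$: a unit vector $h\in\calL\ominus z\calL$ satisfies $|h|=1$ a.e.\ (for the bilateral shift this $h$ need only be unimodular), $M_h$ is unitary on $L^2(\T)$, $hL^2_+$ is the closed span of $\{z^nh:n\ge0\}\subseteq\calL$, and step (2) yields $\calL=hL^2_+$ --- a proper subspace, as it must be.

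\emph{Main obstacle.} The delicate point is the Fourier-coefficient bookkeeping in step (2): one must keep apart the coefficients annihilated by ``$g\perp hL^2_+$'' from those annihilated by ``$h\perp z\calL$'', and then bridge the two index ranges via the conjugation identity $\wh{\ol\varphi}(n)=\ol{\wh\varphi(-n)}$. The one external fact I would quote rather than reprove is that the multiplication algebra on $L^2(\T)$ equals its own commutant, which disposes of the reducing case. Since this is a classical theorem of Beurling, Helson and Wiener, an equally acceptable option is to simply cite \cite{n}; the above is the self-contained route.
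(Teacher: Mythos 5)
Your argument is correct: the wandering-vector computation in (b) (orthogonality $\<z^nh,h\>=0$ for $n\ge1$ forcing $|h|^2$ constant, then the two families of vanishing Fourier coefficients of $g\ol h$ and $h\ol g$ bridged by $\wh{\ol\varphi}(n)=\ol{\wh\varphi(-n)}$ to give $\calL=hL^2_+$) is sound, as is the reducing case of (a) via commutation of $P_\calL$ with all $M_\varphi$, $\varphi\in L^\infty(\T)$, and the maximal-abelian property of the multiplication algebra. Note, however, that the paper does not prove this theorem at all: it is quoted from Nikolskii \cite{n} as background, so what you have written is precisely the classical Beurling--Helson proof that the cited reference contains, not an alternative to an argument in the paper. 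One point worth flagging: in the simply invariant case of (a) your proof (correctly) yields $\calL=hL^2_+$ with $h$ merely unimodular, which is the genuine Helson statement; the paper's printed phrase ``$\calL=hL^2$ with some $h\in\frakI$'' cannot be meant literally, since $|h|=1$ a.e.\ makes $M_h$ unitary on $L^2$ and hence $hL^2=L^2$, contradicting $M_\T\calL\neq\calL$. So your version is the right one, and your parenthetical remark that $h$ need not be inner for the bilateral shift is exactly the needed correction; this discrepancy is harmless for the paper, which only invokes the doubly invariant case of (a) (in Theorem \ref{t:charac}, where $\ker U$ is both $M_\T$- and $M_\T^*$-invariant) and part (b). Minor cosmetic points: in (b) it is cleaner to say $M_h$ is an isometry of $L^2$ carrying the closed span of $(z^n)_{n\ge0}$ onto $hL^2_+$ (invariance of $L^2_+$ under $M_h$ is true but not needed), and in (a) one should fix a Borel representative of the idempotent $\psi=\chi_\sigma$, consistent with the paper's convention of identifying sets of equal arc-length measure.
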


\begin{lem}\label{l:anders}
Let $h\in\frakI$. Then
\begin{equation}\label{e:oc}
P_{hL^2_-}L^2_+ = hL^2_-\cap L^2_+ = L^2_+\ominus hL^2_+.
\end{equation}
Moreover,
\begin{equation}\label{e:projection}
P_{hL^2_-\cap L^2_+}f = h\cdot P_-(f\ol h),\qquad f\in L^2_+.
\end{equation}
\end{lem}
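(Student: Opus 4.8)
The statement to prove is Lemma~\ref{l:anders}: for $h\in\frakI$ we have the chain of equalities $P_{hL^2_-}L^2_+ = hL^2_-\cap L^2_+ = L^2_+\ominus hL^2_+$, together with the projection formula \eqref{e:projection}. The plan is to exploit that multiplication by $h$ is a unitary operator on $L^2$ (since $|h(z)|=1$ a.e.), which carries the orthogonal decomposition $L^2 = L^2_+\oplus L^2_-$ to the decomposition $L^2 = hL^2_+\oplus hL^2_-$, and in particular maps orthogonal projections to orthogonal projections: $P_{hL^2_\pm} = M_h P_\pm M_h^* = M_h P_\pm M_{\ol h}$.

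First I would establish the projection formula, since the set equalities will follow from it. Given $f\in L^2_+$, write $f = hf\ol h$ and split $f\ol h = P_+(f\ol h) + P_-(f\ol h)$, so that $f = hP_+(f\ol h) + hP_-(f\ol h)$ with the first summand in $hL^2_+$ and the second in $hL^2_-$. Since $hL^2_+$ and $hL^2_-$ are orthogonal complements in $L^2$, this is precisely the orthogonal decomposition of $f$ along $hL^2_-$, whence $P_{hL^2_-}f = hP_-(f\ol h)$. To get \eqref{e:projection} I must then check that this vector actually lies in $hL^2_-\cap L^2_+$, not merely in $hL^2_-$: membership in $hL^2_-$ is clear, and membership in $L^2_+$ follows because $f - hP_+(f\ol h) = hP_-(f\ol h)$ and I should verify $hP_+(f\ol h)\in L^2_+$ — this is where I would invoke that $h\in L^2_+\cap L^\infty$ and $P_+(f\ol h)\in L^2_+\cap$ (bounded, since $f\ol h\in L^2$ — actually one must be slightly careful here and may prefer to argue directly that $hL^2_-\cap L^2_+$ is exactly the range of $P_{hL^2_-}$ restricted to $L^2_+$). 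Concretely: $P_{hL^2_-}f\in hL^2_-$ always, and $P_{hL^2_-}f = f - P_{hL^2_+}f$; since $f\in L^2_+$ it suffices that $P_{hL^2_+}f\in L^2_+$, which holds because $hL^2_+\subset L^2_+$ (as $h\in L^2_+$ is inner, $hL^2_+$ is an $M_\T^+$-invariant subspace of $L^2_+$ by Theorem~\ref{t:beurling}(b), or directly since a product of functions with nonnegative spectrum has nonnegative spectrum).

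With the formula in hand, the set equalities are quick. The inclusion $P_{hL^2_-}L^2_+\subset hL^2_-\cap L^2_+$ is exactly the refined statement just proved. Conversely, if $g\in hL^2_-\cap L^2_+$ then $P_{hL^2_-}g = g$ and $g\in L^2_+$, so $g\in P_{hL^2_-}L^2_+$; this gives the first equality. For the second equality $hL^2_-\cap L^2_+ = L^2_+\ominus hL^2_+$: since $L^2 = hL^2_+\oplus hL^2_-$ orthogonally, intersecting with $L^2_+$ and using $hL^2_+\subset L^2_+$ gives $L^2_+ = hL^2_+\oplus(hL^2_-\cap L^2_+)$, so the orthogonal complement of $hL^2_+$ inside $L^2_+$ is precisely $hL^2_-\cap L^2_+$.

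The main obstacle, such as it is, is the bookkeeping around boundedness/spectrum when asserting $hL^2_+\subset L^2_+$ and when justifying that $P_{hL^2_-}f$ lands in $L^2_+$ for $f\in L^2_+$ — i.e., making sure one is manipulating honest $L^2$ functions and not formal Fourier series. I would handle this cleanly by citing Theorem~\ref{t:beurling}(b) for $hL^2_+\subset L^2_+$ and then deriving everything else from the single orthogonal decomposition $L^2 = hL^2_+\oplus hL^2_-$ together with $M_h$ being unitary; no delicate analysis is required beyond that.
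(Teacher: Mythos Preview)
Your argument is correct. Both you and the paper begin by identifying $(hL^2_+)^\perp = hL^2_-$ in $L^2$ (you phrase this as $M_h$ being unitary and carrying the decomposition $L^2_+\oplus L^2_-$ to $hL^2_+\oplus hL^2_-$; the paper checks it directly), and both arrive at the formula $P_{hL^2_-}f = h\cdot P_-(f\ol h)$. The genuine difference is in how one shows $P_{hL^2_-}L^2_+\subset L^2_+$: the paper carries out an explicit Fourier-series computation of $P_{hL^2_-}(z^m)$ for each $m\in\N$, telescoping to exhibit it as a finite $L^2_+$-combination, and then invokes continuity of the projection. You bypass this entirely with the one-line observation that $P_{hL^2_-}f = f - P_{hL^2_+}f$ and $P_{hL^2_+}f\in hL^2_+\subset L^2_+$. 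Your route is shorter and more conceptual; the paper's computation, while longer, is self-contained and does not need to isolate the inclusion $hL^2_+\subset L^2_+$ as a separate ingredient. Once $P_{hL^2_-}|_{L^2_+}$ is seen to land in $hL^2_-\cap L^2_+$, the remaining identifications proceed identically in both arguments.
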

\begin{proof}
First of all, we have $(hL^2_+)^\perp = hL^2_-$, since
$$
f\in (hL_+^2)^\perp\;\Llra\;\forall g\in L^2_+\!: \,\<f\ol h,g\>_{L^2} = 0\;\Llra\;f\ol h\in L^2_-\;\Llra\;f\in hL^2_-.
$$
In particular, $(hL^2_+)^\perp\cap L^2_+ = hL^2_-\cap L^2_+$. It is now easily checked that $(z^{-n}h)_{n=1}^\infty$ is an orthonormal basis for $hL^2_-$. Hence, for $f\in L^2$,
\begin{equation}\label{e:gen_projection}
P_{hL^2_-}f = \sum_{n=1}^\infty\<f,hz^{-n}\>hz^{-n} = h\sum_{n=1}^\infty\<f\ol h,z^{-n}\>z^{-n} = h\cdot P_-(f\ol h).
\end{equation}
Now, if $m\in\N$, we have
\begin{align*}
P_{hL^2_-}(z^m)
&= h\cdot P_-(z^m\ol h) = h\cdot P_-\left(z^m\sum_{n=0}^\infty\ol{\<h,z^n\>}z^{-n}\right) = h\cdot P_-\left(\sum_{n=-m}\ol{\<h,z^{n+m}\>}z^{-n}\right)\\
&= h\cdot\sum_{n=1}^\infty\ol{\<h,z^{n+m}\>}z^{-n} = z^mh\sum_{n=m+1}^\infty\ol{\<h,z^n\>}z^{-n} = z^mh\ol{\sum_{n=m+1}^\infty\<h,z^n\>z^{n}}\\
&= z^mh\ol{\left(h - \sum_{n=0}^m\<h,z^n\>z^n\right)} = z^m - h\cdot \sum_{n=0}^m\ol{\<h,z^n\>}z^{m-n}\\
&= z^m - \sum_{n=0}^m\ol{\<h,z^{m-n}\>}z^nh,
\end{align*}
which proves that $P_{hL^2_-}(z^m)\in L^2_+$. By continuity of $P_{hL^2_-}$ it follows that $P_{hL^2_-}L^2_+\subset hL^2_-\cap L^2_+$. For the converse inclusion, let $f\in hL^2_-\cap L^2_+$. Then $f = P_{hL^2_-}f\in P_{hL^2_-}L^2_+$. This proves \eqref{e:oc}. Finally, \eqref{e:projection} follows from \eqref{e:oc} and \eqref{e:gen_projection}.
\end{proof}

The next result can be found in \cite{rr}.

\begin{lem}[{\cite[Theorem 3.14]{rr}}]\label{l:rr}
Let $h\in\frakI$. Then $L^2_+\ominus hL^2_+$ is finite-dimensional if and only if $h$ is a finite Blaschke product.
\end{lem}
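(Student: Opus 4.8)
The statement to prove is Lemma~\ref{l:rr}: for $h\in\frakI$, the space $L^2_+\ominus hL^2_+$ is finite-dimensional if and only if $h$ is a finite Blaschke product. Let me sketch a proof.

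The plan is to work on the Hardy space side, using the model-space structure $\calH_h = L^2_+\ominus hL^2_+ = H^2\ominus hH^2$ (identifying $L^2_+$ with $H^2$ via radial limits, as in Appendix~\ref{ap:contractions}).

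Easy direction: if $h = \prod_{k=1}^N \frac{|a_k|}{a_k}\frac{a_k - z}{1-\overline{a_k}z}$ is a finite Blaschke product of degree $N$, then I want to show $\dim \calH_h = N$. One clean way: the reproducing kernels $k_{a_k}(z) = \frac{1}{1-\overline{a_k}z}$ (suitably modified for repeated zeros, using derivatives of the kernel) span $\calH_h$ — indeed $f \perp hH^2$ together with $f\in H^2$ forces $f(a_k)=0$ to be compatible... actually the cleaner statement is that $\calH_h = \{f\in H^2 : f/h \in \overline{H^2_0}\}$ and for a degree-$N$ Blaschke product this is exactly the span of the $N$ reproducing kernels at the zeros (with multiplicity). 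Alternatively, and most elementarily, one can exhibit an explicit orthogonal-type decomposition: writing $h = b_1\cdots b_N$ with single Blaschke factors $b_j$, one has the chain $H^2 \supset b_1 H^2 \supset b_1 b_2 H^2 \supset \cdots \supset h H^2$, and each quotient $b_1\cdots b_{j-1}H^2 \ominus b_1\cdots b_j H^2$ is one-dimensional (spanned by $b_1\cdots b_{j-1}$ times the kernel at the zero of $b_j$), since $H^2 \ominus b_j H^2$ is one-dimensional for a single Blaschke factor. Summing gives $\dim\calH_h = N$.

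Hard direction (the main obstacle): suppose $h$ is inner but not a finite Blaschke product; show $\dim\calH_h = \infty$. Here I would split into cases via the canonical factorization $h = B\cdot S$ where $B$ is a Blaschke product (possibly infinite, possibly finite, possibly trivial) and $S$ is a nonconstant singular inner function. If $B$ is an infinite Blaschke product with zeros $(a_k)_{k\in\N}$, then $\calH_B \subset \calH_h$ (since $B \mid h$ implies $hH^2 \subset BH^2$, hence $\calH_B = H^2\ominus BH^2 \subset H^2 \ominus hH^2 = \calH_h$), and $\calH_B$ contains the infinitely many linearly independent reproducing kernels $k_{a_k}$ (linear independence of distinct-point reproducing kernels is standard; for repeated zeros use the derivative kernels), so $\dim\calH_h=\infty$. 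If instead $S$ is nontrivial, then similarly $\calH_S \subset \calH_h$, so it suffices to show $\dim\calH_S = \infty$ for a singular inner function $S$; this can be seen because $S$ has no zeros in $\D$, so if $\calH_S$ were finite-dimensional, $A_S = P_{\calH_S} M^+_\T|_{\calH_S}$ would be a finite-dimensional operator whose minimal function is $S$ (by Lemma~\ref{l:C0}), but a finite-dimensional operator has a polynomial minimal polynomial whose roots are its eigenvalues in $\D$ — forcing $S$ to be a finite Blaschke product, a contradiction. The same spectral argument in fact handles all cases at once: if $\calH_h$ is finite-dimensional then $A_h$ is a finite matrix, its minimal function $m_{A_h} = h$ (Lemma~\ref{l:C0}) must equal the minimal polynomial divided appropriately, whose only inner divisors are finite Blaschke products built from eigenvalues; hence $h$ is a finite Blaschke product. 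I expect citing \cite{rr} is how the authors finesse this, but the spectral-theoretic route via Lemma~\ref{l:C0} is the conceptually cleanest self-contained argument, and the reproducing-kernel argument is the most elementary; the delicate point to get right is the treatment of zeros with multiplicity and ensuring the kernels (or their derivatives) are genuinely linearly independent.
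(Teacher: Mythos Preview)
Your anticipation is exactly right: the paper does not prove this lemma at all; it is simply quoted from \cite[Theorem 3.14]{rr} and placed in the appendix as background. So there is no ``paper's own proof'' to compare against.

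Your sketch is correct on both directions. The chain argument for the easy direction is the standard one and is fine as written. For the hard direction, both routes you outline work. The reproducing-kernel route is the most elementary and is essentially what one finds in \cite{rr}: if $h$ has infinitely many zeros one gets infinitely many independent kernels in $\calH_h$, and if $h$ has a nontrivial singular factor $S$ one can still produce infinitely many independent elements (e.g.\ the functions $(1-S)S^k$, $k\ge 0$, lie in $\calH_S$ and are independent). Your spectral route via Lemma~\ref{l:C0} is a nice alternative that stays entirely within the paper's own toolkit: if $\dim\calH_h<\infty$ then $A_h$ is a c.n.u.\ contraction on a finite-dimensional space, hence has all eigenvalues in $\D$ (a unimodular eigenvalue of a contraction spans a reducing subspace on which the operator is unitary), its minimal polynomial $p$ annihilates it, and since the minimal function $h$ divides $p$ in $H^\infty$ while the inner part of $p$ is a finite Blaschke product, $h$ must itself be a finite Blaschke product. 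The only point you flag as delicate --- eigenvalues lying strictly inside $\D$ --- is handled by the c.n.u.\ property, so there is no real gap.
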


\vspace*{.24cm}

\vspace{1cm}
\section*{Author affiliations}

\end{document}